\documentclass[a4paper, 12pt]{article}

%% Packages
\usepackage[top = 2.5cm, bottom = 2.5cm, left = 2.4cm, right = 2.4cm]{geometry}
\usepackage{amsfonts, amsmath, amsthm, amssymb, mathtools, cases, bbm, array}
\usepackage{lmodern, indentfirst, setspace, changepage, fancyhdr}
\usepackage{tikz, pgfplots, float, subcaption}
\usepackage[title]{appendix}
\usepackage[english]{babel}
\usepackage{cite}

%% Headers
\pagestyle{fancy}
\fancyhf{}
\lhead{\scriptsize{Fluid limits for interacting queues in sparse dynamic graphs}}
\rhead{\scriptsize{Goldsztajn, Borst and Van Leeuwaarden}}
\cfoot{\thepage}

%% Hyperlinks
\usepackage{hyperref}
\hypersetup{
	colorlinks = true,
	linkcolor = blue,
	citecolor = blue,
}

%% Theorems
\newtheorem{proposition}{Proposition}
\newtheorem{corollary}{Corollary}
\newtheorem{theorem}{Theorem}
\newtheorem{lemma}{Lemma}
\theoremstyle{definition}
\newtheorem{assumption}{Assumption}
\newtheorem{definition}{Definition}

\newtheorem{remark}{Remark}

%% Math notation
\newcommand{\argmin}[1]{\underset{#1}{\text{argmin}}}

\newcommand{\ind}[1]{\mathbbm{1}_{\left\{#1\right\}}}
\newcommand{\norm}[1]{\left|\left|#1\right|\right|}
\newcommand{\floor}[1]{\left\lfloor#1\right\rfloor}

\newcommand{\ceil}[1]{\left\lceil#1\right\rceil}
\newcommand{\map}[3]{#1 : #2 \longrightarrow #3}
\newcommand{\set}[2]{\left\{#1 : #2\right\}}
\newcommand{\sett}[2]{\left(#1 : #2\right)}

\newcommand{\defeq}{\vcentcolon=}

\newcommand{\supp}{\mathrm{supp}}

\newcommand{\bq}{\boldsymbol{q}}
\newcommand{\br}{\boldsymbol{r}}
\newcommand{\bs}{\boldsymbol{s}}
\newcommand{\bu}{\boldsymbol{u}}
\newcommand{\bv}{\boldsymbol{v}}
\newcommand{\bw}{\boldsymbol{w}}
\newcommand{\bx}{\boldsymbol{x}}
\newcommand{\by}{\boldsymbol{y}}

\newcommand{\bE}{\boldsymbol{E}}
\newcommand{\bG}{\boldsymbol{G}}

\newcommand{\bL}{\boldsymbol{L}}
\newcommand{\bM}{\boldsymbol{M}}

\newcommand{\bX}{\boldsymbol{X}}
\newcommand{\bY}{\boldsymbol{Y}}

\newcommand{\prob}{\mathbbm{P}}
\newcommand{\calA}{\mathcal{A}}

\newcommand{\calD}{\mathcal{D}}

\newcommand{\calF}{\mathcal{F}}
\newcommand{\calG}{\mathcal{G}}
\newcommand{\calH}{\mathcal{H}}
\newcommand{\calI}{\mathcal{I}}

\newcommand{\calK}{\mathcal{K}}

\newcommand{\calN}{\mathcal{N}}

\newcommand{\calR}{\mathcal{R}}

\newcommand{\N}{\mathbbm{N}}

\newcommand{\R}{\mathbbm{R}}
\newcommand{\Z}{\mathbbm{Z}}
\newcommand{\e}{\mathrm{e}}

%% Conditional expectation
\newcommand{\expect}{E\expectarg}
\newcommand{\var}{\mathrm{Var}\expectarg}
\DeclarePairedDelimiterX{\expectarg}[1]{[}{]}{%
	\ifnum\currentgrouptype=16 \else\begingroup\fi
	\activatebar#1
	\ifnum\currentgrouptype=16 \else\endgroup\fi
}
\newcommand{\cprob}{P\probarg}
\DeclarePairedDelimiterX{\probarg}[1]{(}{)}{%
	\ifnum\currentgrouptype=16 \else\begingroup\fi
	\activatebar#1
	\ifnum\currentgrouptype=16 \else\endgroup\fi
}
\newcommand{\innermid}{\nonscript\;\delimsize\vert\nonscript\;}
\newcommand{\activatebar}{%
	\begingroup\lccode`\~=`\|
	\lowercase{\endgroup\let~}\innermid 
	\mathcode`|=\string"8000
}

%% Plots
\captionsetup{labelfont = {bf, footnotesize}, textfont = footnotesize}
\usetikzlibrary{automata, arrows, positioning, calc, external, babel}
\tikzexternalize
\usepgfplotslibrary{fillbetween}
\pgfplotsset{
	compat = 1.16,
	ticklabel style = {font = \footnotesize},
	every axis/.append style = {
		grid style = {dashed, gray, opacity = 0.2},
		label style = {font = \footnotesize}, 
		width = \columnwidth,
		height = 0.618 * 1 * \columnwidth
	}
}

%% Colors
\definecolor{britishracinggreen}{rgb}{0.0, 0.26, 0.15}
\definecolor{bostonuniversityred}{rgb}{0.8, 0.0, 0.0}
\definecolor{ceruleanblue}{rgb}{0.16, 0.32, 0.75}
\definecolor{airforceblue}{rgb}{0.36, 0.54, 0.66}
\definecolor{cadmiumgreen}{rgb}{0.0, 0.42, 0.24}
\definecolor{ao(english)}{rgb}{0.0, 0.5, 0.0}
\definecolor{coolblack}{rgb}{0.0, 0.18, 0.39}
\definecolor{byzantine}{rgb}{0.74, 0.2, 0.64}
\definecolor{alizarin}{rgb}{0.82, 0.1, 0.26}
\definecolor{arsenic}{rgb}{0.23, 0.27, 0.29}
\definecolor{cobalt}{rgb}{0.0, 0.28, 0.67}
\definecolor{amber}{rgb}{1.0, 0.75, 0.0}

%% Lists

%% Title
\title{Fluid limits for interacting queues\\in sparse dynamic graphs\vspace{\baselineskip}}

%% Authors
\author{
\begin{tabular}{ccc}
	\normalsize Diego Goldsztajn & \hspace{5mm} \normalsize Sem C. Borst & \hspace{5mm} \normalsize Johan S.H. van Leeuwaarden \\
	\scriptsize Universidad ORT Uruguay & \hspace{5mm} \scriptsize Eindhoven University of Technology & \hspace{5mm} \scriptsize Tilburg University \\
	\scriptsize\texttt{goldsztajn@ort.edu.uy} & \hspace{5mm} \scriptsize\texttt{s.c.borst@tue.nl} & \hspace{5mm} \scriptsize\texttt{j.s.h.vanleeuwaarden@uvt.nl} \\
\end{tabular}
}

\date{\vspace{\baselineskip} October 11, 2025}

\begin{document}

%% Title
	
\maketitle

\noindent\rule{\textwidth}{1pt}

\vspace{1\baselineskip}

\onehalfspacing

\begin{adjustwidth}{0.5cm}{0.5cm}
	\begin{center}
		\textbf{Abstract}
	\end{center}
	
	\vspace{0.3\baselineskip}
	
	\noindent Consider a network of $n$ single-server queues where tasks arrive independently at each server at rate $\lambda_n$. The servers are connected by a graph that is resampled at rate $\mu_n$ in a way that is symmetric with respect to the servers, and each task is dispatched to the shortest queue in the graph neighborhood where it appears. We aim to gain insight in the impact of the dynamic network structure on the load balancing dynamics in terms of the occupancy process which describes the empirical distribution of the number of tasks across the servers. This process evolves on the underlying dynamic graph, and its dynamics depend on the number of tasks at each individual server and the neighborhood structure of the graph. We establish that this dependency disappears in the limit as $n \to \infty$ when $\lambda_n / n \to \lambda$ and $\mu_n \to \infty$, and prove that the limit of the occupancy process is given by a system of differential equations that depends solely on $\lambda$ and the limiting degree distribution of the graph. We further show that the stationary distribution of the occupancy process converges to an equilibrium of the differential equations, and derive properties of this equilibrium that reflect the impact of the degree distribution. Our focus is on truly sparse graphs where the maximum degree is uniformly bounded across $n$, which is natural in load balancing systems.
	
	\vspace{0.5\baselineskip}
	
	\small{\noindent \textit{Key words:} interacting queues, load balancing, dynamic random graphs, fluid limits.}
	
	\vspace{0.25\baselineskip}
	
	\small{\noindent  Research carried out while Diego Goldsztajn was with Eindhoven University of Technology. The authors were supported by the Netherlands Organisation for Scientific Research (NWO) through Gravitation-grant NETWORKS-024.002.003 and Vici grant 202.068. Support was also provided by ANII Uruguay through the fellowship PD\_NAC\_2024\_1\_182118.} 
\end{adjustwidth}

\newpage

%% Body

\section{Introduction}
\label{sec: introduction}

We consider a network of $n$ single-server queues where tasks arrive at each server as independent Poisson processes of the same intensity $\lambda_n / n$. The servers are interconnected by a graph $\bG_n(t)$ that is resampled from some fixed random graph law at rate $\mu_n$. The tasks are dispatched to the shortest queue in the graph neighborhood where they appear and have exponentially distributed service requirements with unit mean.

We are interested in the occupancy process $\bq_n$ which describes the empirical distribution of the number of tasks, i.e., $\bq_n(t, i)$ is the fraction of servers with at least $i$ tasks at time~$t$. The dynamics of this process can be described by equations of the form:
\begin{equation}
	\label{eq: high-level stochastic equations}
	\bq_n(t, i) = \bq_n(0, i) + \frac{1}{n}\calA_n\left(\bG_n, \bX_n, t, i\right) - \frac{1}{n}\calD_n\left(\bq_n, t, i\right).
\end{equation}
The second and third terms count the numbers of arrivals to servers with exactly $i - 1$ tasks and departures from servers with exactly $i$ tasks, respectively. While the third term only depends on the occupancy process itself, the second term depends on the dynamic graph $\bG_n$ and the process $\bX_n$ that describes the number of tasks at each server, i.e., $\bX_n(t, u)$ denotes the number of tasks queueing in front of server $u$ at time $t$.

Due to the complexity of \eqref{eq: high-level stochastic equations}, the occupancy process is not amenable to exact analysis, and thus we will analyze a limiting regime where $n \to \infty$ and $\lambda_n / n \to \lambda$, offering far greater tractability. For example, the limit of the occupancy process is given by an infinite system of differential equations when the graph is complete. However, the proof of this result hinges critically on the fact that the right-hand side of \eqref{eq: high-level stochastic equations} depends solely on the occupancy process itself. This property holds because servers with the same number of tasks are exchangeable when the graph is complete, which is not true in the general case where the neighborhoods of different servers are typically different.

Several papers have successfully treated the lack of exchangeability when the queues interact over a static network, but the situation where the network is dynamic has received little attention, and to the best of our knowledge had not been rigorously analyzed before. From a mathematical perspective, the dynamic case is interesting since the dynamics of the network impact how the occupancy process evolves over time, leading to different proof arguments for characterizing the limiting behavior, as discussed in Sections \ref{sub: main results and design implications} and \ref{sub: related work}.

Moreover, there are multiple real-world examples of queues interacting over dynamic network structures. For instance, the connections between the nodes of mobile ad-hoc and peer-to-peer networks are constantly changing, and the network topology of data centers can be adjusted over time, particularly with optical devices.

We prove that the fluid limit of the occupancy process is governed by an infinite system of differential equations if $\mu_n \to \infty$, the law of the graph is invariant under permutations of the nodes and the degree distribution converges weakly. Remarkably, the limit depends solely on the probability generating function of the limiting degree distribution and not on any other structural properties of the graph. In addition, the system of differential equations has a global attractor when the limiting degree distribution has finite mean, and the stationary distribution of the occupancy process converges weakly to the global attractor when the graph is resampled according to a Poisson process. Our analysis focuses on the truly sparse regime where the maximum degree remains uniformly bounded across~$n$, which is natural in a load balancing context in view of the associated communication overhead; however, we study dense graphs as well.

\subsection{Overview of the main result}
\label{sub: main results and design implications}

As alluded to above, we assume that the random graph law from which the graph is resampled is invariant under permutations of the nodes. Although this property implies that the resampling procedure is symmetric with respect to the servers, it does not impose any restrictions on the graph topology. For example, the topology can be star-shaped at all times if we define the random graph law by randomly relabeling the nodes of a deterministic star-shaped graph. More precisely, we can construct a random graph law that is invariant under permutations of the nodes from any given random graph law, by sampling a graph from the given random graph law and relabeling the nodes uniformly at random.

In the special case where the graph is resampled at every arrival time, the invariance under permutations of nodes implies that the dynamics of the system are equivalent to the following generalized power-of-$(d + 1)$ scheme. When a task arrives, a number $d$ is sampled from the degree distribution of the graph and the task is sent to a server with the least number of tasks among $d + 1$ servers chosen uniformly at random among all the servers. In this case the servers are exchangeable and the right-hand side of \eqref{eq: high-level stochastic equations} depends solely on the occupancy process and the degree distribution. When the graph is regular so the degree distribution is deterministic, we recover the classical power-of-$d$ scheme of \cite{vvedenskaya1996queueing,mitzenmacher2001power}.

We focus instead on the more challenging and interesting situation where the graph remains fixed throughout many consecutive arrivals. In particular, the average number of arrivals between two consecutive resampling times is $\lambda_n / \mu_n$ and may approach infinity as $n \to \infty$. In this case the servers are not exchangeable and the right-hand side of \eqref{eq: high-level stochastic equations} depends on $\bG_n$ and $\bX_n$, because the current graph and the precise location of the tasks influence the dispatching decisions and their effect on the occupancy process.

The main challenge is to establish that the dependence of the right-hand side of \eqref{eq: high-level stochastic equations} on $\bG_n$ and $\bX_n$ disappears in the limit, which we do by using two key insights. First, the dynamics of the occupancy process are asymptotically equivalent if on the right-hand side of \eqref{eq: high-level stochastic equations} the current state of the system is replaced by the state at the most recent resampling time. Second, if the occupancy state is given and the graph is unknown, then the effect of the next dispatching decision on the occupancy state is statistically determined by the current occupancy state and the graph law, rather than the specific graph in effect. Combining these two insights, we show that the dynamics of the occupancy process are asymptotically determined by the occupancy states at the resampling times and the random graph law. Then we prove that the stochastic equations \eqref{eq: high-level stochastic equations} are asymptotically equivalent to those of the generalized power-of-$(d + 1)$ scheme, leading to the same fluid limit.

Informally speaking, the first of the above insights is obtained by carefully bounding the average number of dispatching decisions that would be different if all the queue lengths remained fixed between successive resampling times. This provides a bound for the mean of the total number of different dispatching decisions over any finite interval of time. We prove that this bound approaches zero as $n \to \infty$ and $\mu_n \to \infty$, after we normalize it by the number of servers $n$. This implies that the limit of the occupancy process indeed remains the same when on the right-hand side of \eqref{eq: high-level stochastic equations} the current state of the system is replaced by the state of the system at the most recent resampling time.

The second insight allows to identify suitable vanishing and nonvanishing terms on the right-hand side of \eqref{eq: high-level stochastic equations}. In particular, the second term can be decomposed into two terms. The first one counts dispatched tasks as if the state of the system remained fixed between successive resampling times and the graph was resampled at each arrival time. This term is nonvanishing and similar to a term that appears in the stochastic equations of a generalized power-of-$(d + 1)$ scheme. The second term accounts for the error in the simplification of the dispatching dynamics used to define the first term. By focusing on the resampling times, we identify a discrete-time martingale embedded in this error process; the proof of the martingale property relies on the independence of the graphs used between different couples of consecutive resampling times. We show that this martingale vanishes in the limit and then prove that the entire error process also approaches zero.

\subsection{Related work}
\label{sub: related work}

The situation where the graph is complete corresponds to the so-called supermarket model, which has received immense attention in the literature; see \cite{van2018scalable} for an extensive survey. The fluid limit for the classical power-of-$d$ scheme was obtained in \cite{mitzenmacher2001power,vvedenskaya1996queueing}, whereas \cite{mukherjee2018universality} derived the fluid limit of the occupancy process when $d$ approaches infinity as $n \to \infty$; this covers as a special case the situation where each task is dispatched to the shortest of all the queues. As observed earlier, the servers are exchangeable when the graph is complete, which allows to leverage mean-field arguments similar to those in the pioneering studies \cite{kurtz1970solutions,kurtz1971limit,kurtz1978strong,barbour1980density,norman1972markov,norman1974central}; more recent papers on mean-field limits include \cite{benaim2008class,allmeier2022mean,allmeier2024accuracy,allmeier2023bias}.

Occupancy processes that evolve on static and not complete graphs were considered in \cite{mukherjee2018asymptotically}, where tasks are sent to the shortest queue in the neighborhood where they appear. Assuming that the graphs have suitable edge-expansion properties, \cite{mukherjee2018asymptotically} shows that the fluid limit of the occupancy process is the same as when the graphs are complete. The latter edge-expansion properties imply that the mean degree goes to infinity as $n \to \infty$ and ensure that the queue length distribution is approximately the same for each neighborhood. Static graphs with diverging minimum degree and suitable regularity properties were analyzed in \cite{budhiraja2019supermarket}, where each task is dispatched to the shortest of $d$ queues selected uniformly at random from the neighborhood where the task appeared. In this case the fluid limit of the occupancy process coincides with that for complete graphs derived in \cite{mitzenmacher2001power,vvedenskaya1996queueing}.

In the context of interacting particle systems, \cite{oelschlager1984martingale} considered the empirical distribution of particles in the Euclidean space. Assuming that the dynamics of each particle depend on the state of the particle and the empirical distribution of all the particles, \cite{oelschlager1984martingale} proved that the process describing the empirical distribution of the particles converges to a deterministic process as the number of particles approaches infinity. In this scenario the strength of the interaction between any two particles is inversely proportional to the number of particles and thus vanishes in the limit. In contrast, neighboring particles remain strongly correlated when the particles interact through a truly sparse graph as in \cite{ganguly2022non,ganguly2024hydrodynamic}. Despite the lack of propagation of chaos, \cite{ganguly2024hydrodynamic,ganguly2022non} establish that the limit of the empirical distribution is deterministic and coincides with the limiting law of a typical particle under mild conditions. It is further proved that the limiting law of a particle and its neighborhood can be described by a so-called local equation when the graph converges locally weakly to an infinite regular tree. The local equation was introduced by \cite{lacker2023marginal} in the context of diffusive particles, and describes a stochastic process whose infinitesimal evolution depends on the conditional law of the current state given the past states of the neighboring nodes.

Several papers have recently considered the situation where different streams of tasks are distributed across servers that are connected to the streams through a bipartite graph. General stability results have been provided in \cite{foss1998stability}, and the fluid limit of the occupancy process has been derived in \cite{rutten2022load,rutten2023mean,zhao2024exploiting,zhao2023optimal} under different assumptions on the bipartite graph and the dynamics, including the arrival and service rates. A similar setup has been considered in \cite{weng2020boptimal} but focusing on the mean amount of time that tasks stay in the system in stationarity. In all the cases the graph is static, the degree of the streams diverges and the limiting behavior coincides with the case where the bipartite graph is complete.

In \cite{mukherjee2018asymptotically,budhiraja2019supermarket} and the above papers, the strength of interactions between neighboring servers vanishes because the queue length distribution across a neighborhood is hardly affected in the limit by the queue length of an individual server. In contrast, servers can have strong interactions over a diverging number of consecutive arrivals when the graph is dynamic and truly sparse as in the present paper. However, as we will show, only the average of these interactions matters in the limit due to the way in which the graph changes over time.

\subsection{Some basic notation}
\label{sub: basic notation}

The symbols $P$ and $E$ are used to denote the probability of events and the expectation of functions, respectively. The underlying probability measure to which these symbols refer is always clear from the context or explicitly indicated.

For random variables with values on a common metric space $S$, we denote the weak convergence of $\set{X_n}{n \geq 1}$ to $X$ by $X_n \Rightarrow X$. If $X$ is deterministic, then the weak limit holds if and only if the random variables $X_n$ converge in probability to $X$. In this situation we use the terms \emph{converges weakly} and \emph{converges in probability} interchangeably.

The left and right limits of a function $\map{f}{[0, \infty)}{S}$ are denoted by
\begin{equation*}
f\left(x^-\right) \defeq \lim_{y \to x^-} f(y) \quad \text{for all} \quad x > 0 \quad \text{and} \quad f\left(x^+\right) \defeq \lim_{y \to x^+} f(y) \quad \text{for all} \quad x \geq 0,
\end{equation*}
respectively. We say that $f$ is c\`adl\`ag if the left limits exist for all $x > 0$ and the right limits exist and satisfy $f\left(x^+\right) = f(x)$ for all $x \geq 0$.

We include zero in the set of natural numbers $\N$. Also, we let
\begin{equation*}
\floor{x} \defeq \max \set{n \in \Z}{n \leq x} \quad \text{and} \quad \ceil{x} \defeq \min \set{n \in \Z}{n \geq x} \quad \text{for all} \quad x \in \R.
\end{equation*}

\subsection{Organization of the paper}
\label{sub: organization of the paper}

The rest of the paper is organized as follows. In Section \ref{sec: model description} we specify the dynamics of the occupancy process. In Section \ref{sec: fluid limit} we formulate the fluid limit result. Sections \ref{sec: properties of fluid trajectories} and \ref{sec: performance in equilibrium} focus on sparse graphs where the average degree is upper bounded by some given constant. In Section \ref{sec: properties of fluid trajectories} we establish certain dynamical properties of the differential equation that characterizes the fluid limit, including existence of a globally attractive equilibrium. In Section \ref{sec: performance in equilibrium} we prove that the stationary distribution of the occupancy process converges to this equilibrium when the graph is resampled according to a Poisson process, and establish certain properties that are interesting from a queueing perspective. In Section \ref{sec: proof of the fluid limit} we prove the fluid limit. Appendix \ref{app: simulations} contains simulations involving static and dynamic graphs. Appendices \ref{app: auxiliary results}, \ref{app: construction of sample paths} and \ref{app: tightness of occupancy processes} contain the proofs of several intermediate results.

\section{Model description}
\label{sec: model description}

Consider a network of $n$ servers with infinite buffers. Tasks arrive locally at each of the servers as independent Poisson processes of rate $\lambda_n / n$ and service times are exponentially distributed with unit mean. At time $t$ the number of tasks present in server $u$ is denoted by $\bX_n(t, u)$ and the fraction of servers with at least $i$ tasks is given by
\begin{equation*}
\bq_n(t, i) \defeq \frac{1}{n} \sum_{u = 1}^n \ind{\bX_n(t, u) \geq i}.
\end{equation*}
The stochastic process $\bq_n$ is called occupancy process and the infinite sequence $\bq_n(t)$ is referred to as the occupancy state of the system at time $t$.

A simple directed graph on the set of servers $V_n \defeq \{1, \dots, n\}$ governs the dynamics of the occupancy process; all the results apply to undirected graphs as well since they have natural directed graphs counterparts. The graph is resampled over time from a given random graph law, with every new sample being independent from all the previous samples. At time $t$ the current graph is denoted by $\bG_n(t)$ and $\calR_n(t)$ denotes the number of times that the graph has been resampled so far. The stochastic process $\calR_n$ is called resampling process and its jumps coincide with the times at which the graph is resampled.

The set of edges at time $t$ is denoted by $\bE_n(t)$ and the neighborhood of a server $u$ at time $t$ consists of itself and all the servers $v$ such that $(u, v) \in \bE_n(t)$. The graph structure is used to distribute the tasks as follows. If a task arrives at server $u$ at time $t$, then the task is placed in the queue of an arbitrary server $v(u)$ contained in the set
\begin{equation*}
\argmin{v} \set{\bX_n(t, v)}{v = u\ \text{or}\ (u, v) \in \bE_n(t)},
\end{equation*}
which consists of the servers in the neighborhood of $u$ that have the least number of tasks.

We assume that the graph is invariant under permutations of nodes, i.e.,
\begin{equation*}
P\left(\bE_n(t) = \left\{(u_1, v_1), \dots, (u_m, v_m)\right\}\right) = P\left(\bE_n(t) = \left\{\left(\pi(u_1), \pi(v_1)\right), \dots, \left(\pi(u_m), \pi(v_m)\right)\right\}\right)
\end{equation*}
for all sets of edges $\left\{(u_1, v_1), \dots, (u_m, v_m)\right\}$ and all permutations $\map{\pi}{V_n}{V_n}$. This makes the resampling procedure symmetric with respect to the servers.

\begin{remark}
	\label{rem: invariance under permutations}
	A random graph law satisfying the above condition can be obtained as follows. Let $H$ be any random graph distribution with node set $V_n$. If we draw a graph $h$ from $H$ and a permutation $\map{\pi}{V_n}{V_n}$ uniformly at random, then we can define a graph $g$ by permuting the labels of the nodes of $h$ according to $\pi$. The random graph law $G$ of the graph $g$ obtained in this way is invariant under permutations of the nodes. In other words, the arbitrary random graph law $H$ determines the graph topology and labels are attached to the nodes uniformly at random. For example, suppose that $H$ is the point mass at the undirected graph $h$ such that some node has degree $n - 1$ and all the other nodes have degree one. The above-described random graph law $G$ assigns probability $1 / n!$ to each of the undirected graphs that result from permuting the nodes of $h$. Thus, the topology of $G$ is star-shaped almost surely and each node has probability $1 / n$ of having degree $n - 1$.
\end{remark}

\section{Fluid limit}
\label{sec: fluid limit}

As the number of servers goes to infinity, the asymptotic behavior of the occupancy process can be described by a system of differential equations if certain conditions on the outdegree distribution $D_n$ and the resampling process hold. We define $D_n$ as the distribution of the outdegree of a node selected uniformly at random and we denote the probability mass function of $D_n$ by $p_n(d) \defeq P(D_n = d)$. The fluid limit is proved under the following set of conditions.

\begin{assumption}
\label{ass: fluid limit conditions}
There exist constants $\lambda > 0$ and $\set{p(d) \in [0, 1]}{d \in \N}$ such that
\begin{equation}
\label{eq: limiting degree distribution condition}
\lim_{n \to \infty} \frac{\lambda_n}{n} = \lambda \quad \text{and} \quad \lim_{n \to \infty} p_n(d) = p(d) \quad \text{for all} \quad d \in \N.
\end{equation}
In addition, the resampling processes satisfy a technical property that we define later: we assume that they \emph{pseudo-separate events}.
\end{assumption}

The pseudo-separation property mentioned above is formally stated in Section \ref{sub: events separation property} using notation that we introduce later. Informally, the property implies that the holding time and the total number of arrivals and departures between any two successive resampling times are suitably bounded. The following proposition shows that this property is rather general and holds in many cases of interest; the proof is deferred to Section \ref{sub: events separation property}. In particular, the resampling process can be a renewal process with a rate $\mu_n$ that approaches infinity at an arbitrarily slow rate, and the number of arrivals between successive resampling times can approach infinity with $n$ at any sublinear rate.

\begin{proposition}
	\label{prop: admissible resampling processes}
	Suppose that $\lambda_n / n \to \lambda$ as $n \to \infty$ and there exist $\set{\kappa_n \in \N}{n \geq 1}$ and $\set{\mu_n > 0}{n \geq 1}$ such that the processes $\calR_n$ satisfy one of the following conditions.
	\begin{enumerate}
		\item[(a)] If $s < t$ are any two consecutive resampling times, then exactly $\kappa_n + 1$ tasks arrive in the interval $(s, t]$. Also, $\calR_n$ is independent of the departure times of tasks.
		
		\item[(b)] The resampling processes are independent of the history of the system and the amount of time elapsed between any two consecutive resampling times is at most $1 / \mu_n$.
		
		\item[(c)] We have $\calR_n(t) = \calR(\mu_n t)$ for all $t \geq 0$, where $\calR$ is a fixed independent renewal process with a holding time distribution that has unit mean and finite variance.
	\end{enumerate}
	Also, assume that there exist constants $\set{d_n^- \geq 0}{n \geq 1}$ such that in the system with $n$ servers the indegree of the servers is at most $d_n^-$ with probability one and we have:
	\begin{equation}
	\label{eq: arrival and resampling rates conditions}
	\lim_{n \to \infty} \kappa_n \frac{d_n^- + 1}{n} = 0 \quad \text{if (a) holds,} \quad \text{or} \quad \lim_{n \to \infty} \frac{d_n^- + 1}{\mu_n} = 0 \quad \text{if (b) or (c) hold}.
	\end{equation}
	Then the resampling processes pseudo-separate events.
\end{proposition}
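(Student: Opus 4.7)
The plan is to formalize pseudo-separation as a pair of vanishing bounds---on the holding time $H_n$ of a typical inter-resampling interval and on $N_n(d_n^-+1)/n$, where $N_n$ counts the arrivals and departures in that interval---and then verify both bounds case by case. The factor $d_n^-+1$ enters because a single state change at a server $v$ can alter dispatching decisions only at $v$ itself and at the at most $d_n^-$ servers $u$ with $(u,v)\in\bE_n$, so $N_n(d_n^-+1)$ upper bounds the number of dispatchings whose outcome could be modified by freezing the state at the start of the interval. Once the formal definition of pseudo-separation in Section \ref{sub: events separation property} is spelled out, the proof should reduce to exhibiting these two bounds over finite horizons.

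For case (a), the number of arrivals per interval is deterministically $\kappa_n + 1$, and the interval length is the sum of $\kappa_n + 1$ independent exponentials of aggregate rate $\lambda_n$, so $E[H_n] = (\kappa_n+1)/\lambda_n = O(\kappa_n/n)$. Since the total departure rate is at most $n$, the expected number of departures per interval is at most $n\, E[H_n] = O(\kappa_n)$, hence $E[N_n](d_n^-+1)/n = O(\kappa_n(d_n^-+1)/n) \to 0$ by the first limit in \eqref{eq: arrival and resampling rates conditions}. Independence of $\calR_n$ from the departure process lets us sum these bounds over the intervals inside any finite horizon. For case (b), $H_n \leq 1/\mu_n$ deterministically, and conditional on the resampling times the numbers of arrivals and departures per interval are stochastically bounded by Poissons with means $\lambda_n/\mu_n$ and $n/\mu_n$; a union bound over the $O(\mu_n T)$ intervals in $[0,T]$ combined with standard Poisson tail estimates yields $N_n(d_n^-+1)/n = O((d_n^-+1)/\mu_n) \to 0$ from the second limit in \eqref{eq: arrival and resampling rates conditions}.

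The main obstacle is case (c), where the renewal holding times are unbounded and only finite-variance, so $H_n$ must be controlled in probability rather than deterministically. Applying Chebyshev's inequality to each scaled renewal increment (which has variance $O(1/\mu_n^2)$ by the finite-variance hypothesis) together with a union bound over the $O(\mu_n T)$ intervals in $[0,T]$ shows that the maximum holding time is $o_P(1)$; this uses $\mu_n \to \infty$, which is forced by $(d_n^-+1)/\mu_n \to 0$ and $d_n^-\geq 0$. Conditioning on the renewal times and exploiting independence of $\calR$ from the Poisson arrival and service clocks, the counts $N_n$ are again Poisson-dominated as in case (b), so $N_n(d_n^-+1)/n$ vanishes under the same hypothesis. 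The most delicate bookkeeping will be ensuring that the two bounds derived here are exactly those required by the formal definition of pseudo-separation in Section \ref{sub: events separation property}, which in turn requires checking that $H_n$ and $N_n(d_n^-+1)/n$ control every place where the resampling process enters the proof of the fluid limit.
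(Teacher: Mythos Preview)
Your proposal misidentifies the quantity that pseudo-separation actually controls, and the resulting linear bound is too weak to vanish after summing over a horizon. The formal definition (Definition~\ref{def: separate events sufficiently}) requires
\[
E\bigl[\Sigma_n(T)\bigr] \;=\; \frac{1}{n^2}\,E\!\left[\sum_{m=1}^{\calR_n(T)+1}\!\Bigl((d_n^-+1)\bigl(A_n^m + D_n^m - 1\bigr)A_n^m + (A_n^m)^2\Bigr)\right]\;\to\;0,
\]
which is \emph{quadratic} in the per-interval counts, not linear like your $N_n(d_n^-+1)/n$. The reason is that $N_n(d_n^-+1)$ bounds the number of \emph{servers} whose neighborhood state has changed since the last resampling, not the number of \emph{dispatchings} that change. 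The $k$-th arrival in the interval lands at such a server only with probability $\leq (k+D_n^m)(d_n^-+1)/n$, so the expected number of altered dispatchings per interval is $\leq (d_n^-+1)(A_n^m+D_n^m)A_n^m/n$; the extra factor $A_n^m/n$ is precisely what makes $\Sigma_n(T)$ vanish.

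Your linear bound, summed over the horizon, does not vanish: in case (a) there are $\approx \lambda_n T/(\kappa_n+1) \approx n\lambda T/\kappa_n$ intervals, each contributing $O(\kappa_n(d_n^-+1)/n)$, for a total of order $(d_n^-+1)$; in case (b) there are $\Theta(\mu_n T)$ intervals each contributing $O((d_n^-+1)/\mu_n)$, again totalling order $(d_n^-+1)$. Neither goes to zero under \eqref{eq: arrival and resampling rates conditions}. The paper's proof works because the quadratic structure yields, e.g.\ in case (b), per-interval contributions of order $(d_n^-+1)(\lambda_n+n)\lambda_n(\sigma_n^m-\sigma_n^{m-1})^2/n^2$; summing $(\sigma_n^m-\sigma_n^{m-1})^2 \leq (\sigma_n^m-\sigma_n^{m-1})/\mu_n$ over the horizon gives $T/\mu_n$, and the total is $O((d_n^-+1)/\mu_n)\to 0$. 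In case (c) the same square of holding times is handled via a renewal equation (Lemma~\ref{lem: moments of number of arrivals and departures between resampling times}), which is where the finite-variance hypothesis is actually used. Your Chebyshev-plus-union-bound route for $\Delta_n(T)$ in case (c) is plausible but also not what the paper does: it uses the functional law of large numbers for $\calR$ directly.
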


Note that (a) covers the situation where the graph is always resampled after a given number of arrivals, whereas (b) and (c) cover resampling processes that are governed by an independent clock. If condition (b) holds, then the distributions of the amounts of time between two successive ticks of the clock can be arbitrary as long as they remain supported in $[0, 1 / \mu_n]$. In particular, these holding time distributions are not required to be identical. In contrast, (c) implies that the holding times between successive resampling times are identically distributed, but allows for holding time distributions with infinite support.

\begin{remark}
	\label{rem: resampling rate}
	When conditions (b) or (c) of Proposition \ref{prop: admissible resampling processes} hold, the mean number of tasks that arrive between two successive resampling times is at most $\lambda_n / \mu_n$. In addition, if $\lambda_n / n \to \lambda$ as $n \to \infty$, then \eqref{eq: arrival and resampling rates conditions} is equivalent to
	\begin{equation*}
	\lim_{n \to \infty} \kappa_n \frac{d_n^- + 1}{n} = 0 \quad \text{if (a) holds,} \quad \text{or} \quad \lim_{n \to \infty} \frac{\lambda_n}{\mu_n}\frac{d_n^- + 1}{n} = 0 \quad \text{if (b) or (c) hold}.
	\end{equation*}
	Hence, the condition on the resampling rate is essentially the same under (a), (b) and (c) of Proposition \ref{prop: admissible resampling processes}. If (a) holds and $\kappa_n = 0$ for all $n$ or (b) or (c) hold and $\lambda_n / \mu_n \to 0$ as $n \to \infty$, then \eqref{eq: arrival and resampling rates conditions} holds regardless of how the maximum indegrees $d_n^-$ behave asymptotically.
\end{remark}

\begin{remark}
	\label{rem: strongest possible assumption regarding resampling rate}
	As noted earlier, the truly sparse regime where the maximum indegrees $d_n^-$ are uniformly bounded across $n$ is the focus of this paper. In this case \eqref{eq: arrival and resampling rates conditions} simply states that $\kappa_n = o(n)$ and $\mu_n \to \infty$ as $n \to \infty$. Thus, the number of arrivals between successive resampling times can approach infinity at any sublinear rate. These conditions are tight in the sense that they cannot be weakened without entering into the realm of fluid limits for static graphs. For example, if $\mu_n$ is bounded and the resampling process is deterministic, then there exists $\varepsilon > 0$ such that the initial graph remains fixed in $[0, \varepsilon]$. A fluid limit here would yield a fluid limit over $[0, \varepsilon]$ for a static random graph.
\end{remark}

\begin{remark}
	\label{rem: maximum indegree condition}
	The pseudo-separation property and \eqref{eq: arrival and resampling rates conditions} involve the maximum indegrees $d_n^-$. While we do not believe the pseudo-separation property to be a necessary condition for the fluid limit to hold, the numerical experiments of Appendix \ref{app: simulations} suggest that the dependence of this property on the maximum indegrees could be a manifestation of some fundamental condition that is in fact necessary for the fluid limit, and not just an artifact of our proof technique. Note however that the dependence of the pseudo-separation property on the maximum indegrees is trivial in the truly sparse regime that is the focus of this paper, i.e., when the maximum indegrees are uniformly bounded across $n$.
\end{remark}

It follows from \eqref{eq: limiting degree distribution condition} and Fatou's lemma that
\begin{equation*}
\sum_{d = 0}^\infty p(d) \leq \liminf_{n \to \infty} \sum_{d = 0}^\infty p_n(d) = 1.
\end{equation*}
If equality is attained on the left, then $D_n$ converges weakly as $n \to \infty$ to a distribution $D$ that has probability mass function $p$. We refer to $D$ as the limiting outdegree distribution and we let $\varphi$ denote its probability generating function. In general, we define
\begin{equation*}
\varphi(x) \defeq \sum_{d = 0}^\infty x^dp(d) \quad \text{for all} \quad x \in [0, 1] \quad \text{and} \quad p(\infty) \defeq 1 - \sum_{d = 0}^\infty p(d).
\end{equation*}
We say that the limiting outdegree distribution is nondegenerate and given by $D$ when $p(\infty) = 0$. Otherwise, we say that the limiting outdegree distribution is degenerate.

Let $\ell_1$ be the space of all absolutely summable $x \in \R^\N$ with the norm
\begin{equation*}
\norm{x}_1 \defeq \sum_{i = 0}^\infty |x(i)| \quad \text{for all} \quad x \in \ell_1.
\end{equation*}
The sample paths of $\bq_n$ lie in the space $D_{\ell_1}[0, \infty)$ of c\`adl\`ag functions from $[0, \infty)$ into $\ell_1$, which we endow with the metric of uniform convergence over compact sets. The following fluid limit is proved in Section \ref{sec: proof of the fluid limit}.

\begin{theorem}
\label{the: fluid limit}
Suppose that Assumption \ref{ass: fluid limit conditions} holds and that the sequence of initial occupancy states $\set{\bq_n(0)}{n \geq 1}$ is tight in $\ell_1$. Then every subsequence of $\set{\bq_n}{n \geq 1}$ has a further subsequence that converges weakly in $D_{\ell_1}[0, \infty)$. Furthermore, the limit $\bq$ of any convergent subsequence is almost surely continuous from $[0, \infty)$ into $\ell_1$ and satisfies
\begin{equation}
\label{eq: fluid dynamics}
\begin{split}
\bq(t, i) &= \bq(0, i) + \lambda\int_0^t \left[a_{i - 1}\left(\bq(s)\right) - a_i\left(\bq(s)\right)\right]ds - \int_0^t \left[\bq(s, i) - \bq(s, i + 1)\right]ds
\end{split}
\end{equation}
for all $i \geq 1$ and $t \geq 0$ with probability one. The quantity $a_i(q)$ can be interpreted as the asymptotic probability of a task being dispatched to a server with at least $i$ tasks when the occupancy state is $q$. We have $a_0(q) \defeq 1$ and
\begin{equation*}
	a_i(q) \defeq q(i)\varphi\left(q(i)\right) + \left[p(\infty) - \frac{1 - q(i + 1)}{\lambda}\right]^+\ind{q(i) = 1} \quad \text{for all} \quad i \geq 1.
\end{equation*}
\end{theorem}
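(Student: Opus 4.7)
The plan is to use the semimartingale decomposition of \eqref{eq: high-level stochastic equations} and implement the two insights summarized in Section \ref{sub: main results and design implications}: freeze the state between resampling times and then average over the graph law. Writing the arrival and departure point processes in compensator form gives
\begin{equation*}
\bq_n(t, i) = \bq_n(0, i) + \frac{\lambda_n}{n}\int_0^t \bh_n\bigl(\bG_n(s), \bX_n(s), i\bigr)ds - \int_0^t \bigl[\bq_n(s, i) - \bq_n(s, i + 1)\bigr]ds + \bM_n(t, i),
\end{equation*}
where $\bh_n(G, X, i)$ is the conditional probability that a fresh arrival is dispatched to a server with at least $i$ tasks given graph $G$ and queue vector $X$, and $\bM_n$ is a square-integrable martingale. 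Since the departure integrand is already a function of $\bq_n$, the work reduces to replacing $\bh_n(\bG_n(s), \bX_n(s), i)$ by $a_i(\bq_n(s))$ in the limit.

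Tightness of $\set{\bq_n}{n \geq 1}$ in $D_{\ell_1}[0, \infty)$ follows from tightness of the initial states, the bound $2/n$ on the $\ell_1$-size of each jump, and the fact that the total event rate is $O(n)$. Standard Aldous-type criteria applied coordinatewise, combined with control of the tail of the occupancy state (of the kind deferred in the paper to Appendix \ref{app: tightness of occupancy processes}), give the result, and the vanishing jump sizes force continuity of every subsequential limit. The same jump-size estimate yields $E[\bM_n(t, i)^2] = O(t/n)$, so $\bM_n \Rightarrow 0$ in $D_{\ell_1}[0, \infty)$.

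The heart of the proof is the replacement of the arrival integrand. Let $\tau_n(s)$ denote the largest resampling time in $[0, s]$. The \emph{freezing step} replaces $\bX_n(s)$ by $\bX_n(\tau_n(s))$ on the right-hand side; the expected error is bounded by counting the arrivals during a typical resampling epoch whose dispatching decision would differ under the frozen queue vector. Each queue change during an epoch can only flip decisions made by servers that have the affected server as an out-neighbour, so the accumulated discrepancy over $[0, T]$ is bounded by a constant multiple of (mean number of arrivals plus departures per epoch) $\times\, (d_n^- + 1) \,\times$ (number of epochs), which is $o(n)$ precisely by \eqref{eq: arrival and resampling rates conditions}. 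The \emph{averaging step} then replaces $\bh_n(\bG_n(\tau_n(s)), \bX_n(\tau_n(s)), i)$ by its conditional expectation $\bar{h}_n(\bq_n(\tau_n(s)), i)$ over the graph given the history strictly before $\tau_n(s)$; invariance under permutations of nodes makes this dependence on $\bX_n$ factor through $\bq_n$ alone. Evaluated at the successive resampling times, the differences $\bh_n - \bar{h}_n$ form a discrete-time martingale with bounded increments (thanks to the independence of successive graph samples), and its quadratic variation is controlled by the number of resampling times in $[0, T]$; a Doob inequality forces the continuous-time integrated error to vanish.

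It remains to show $\bar{h}_n(q, i) \to a_i(q)$ and to identify $a_i$. Permutation invariance of the graph law, together with a coupling between sampling with and without replacement, implies that the neighborhood of a uniformly chosen server of out-degree $d$ looks in distribution like $d + 1$ i.i.d. samples from $\bq_n$, so the probability that the dispatched queue has at least $i$ tasks tends to $q(i)\varphi(q(i))$ after summing against $p_n(d)$. In the degenerate case $p(\infty) > 0$, servers whose degree diverges with $n$ carry an asymptotic fraction $p(\infty)$ of the arrival rate and dispatch to a global minimum queue, contributing an additional term at the levels $i$ where $q(i) = 1$; a flow-balance argument at level $i + 1$ supplies the correction $[p(\infty) - (1 - q(i + 1))/\lambda]^+\ind{q(i) = 1}$. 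Combining all ingredients and passing to the limit in the displayed equation above yields \eqref{eq: fluid dynamics}. I expect the freezing step to be the main obstacle: carefully comparing dispatching decisions on the same graph under two slightly different queue vectors requires detailed bookkeeping of how departures propagate through small-degree neighbourhoods during a single epoch, and it is there that the pseudo-separation hypothesis is used most delicately; by contrast, once the specific graph realization has been averaged out, the identification of $a_i(q)$ is comparatively clean.
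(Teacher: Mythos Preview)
Your overall architecture matches the paper's: tightness, a point-process martingale that vanishes, then the two key replacements (freeze the queue vector at the last resampling time; average out the graph using permutation invariance), with the averaging error controlled by a discrete-time martingale indexed by resampling epochs. The paper carries this out in counting form (sums over arrival times) rather than your compensator form, but the two are equivalent and the logic is the same.

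There is, however, a real gap in your freezing-step bound. You write the accumulated discrepancy as ``(arrivals + departures per epoch) $\times\,(d_n^-+1)\,\times$ (number of epochs)''. Taken literally this is of order $(d_n^-+1)(\lambda_n+n)T$, which is \emph{not} $o(n)$ even when $d_n^-$ is bounded, and \eqref{eq: arrival and resampling rates conditions} does not rescue it. What you are missing is a further factor of order $1/n$ (equivalently, an extra factor of the epoch length or of $A_n^m/n$): a queue change at server $v$ only flips an actual dispatching decision if a subsequent arrival in the same epoch lands at one of the at most $d_n^-+1$ servers that see $v$, and the arriving server is uniform over $V_n$. The paper makes this precise by bounding, for the $k$-th arrival in an epoch, $P(\text{flip})\le (k+D_n^m)(d_n^-+1)/n$, and summing over $k$ to get $\frac{d_n^-+1}{n}\sum_m\bigl[A_n^m(A_n^m-1)+A_n^m D_n^m\bigr]$; after dividing by $n$ this is exactly the quantity $E[\Sigma_n(T)]$ whose vanishing is the content of the pseudo-separation hypothesis. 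Your averaging-martingale step is fine in spirit, but note that in the paper the relevant quadratic variation is again $\sum_m (A_n^m)^2/n^2\le E[\Sigma_n(T)]$, not merely ``the number of resampling times''.

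A smaller point: your ``flow-balance'' derivation of the correction $[p(\infty)-(1-q(i+1))/\lambda]^+\ind{q(i)=1}$ only delivers $\dot{\bu}(t_0,i)=\lambda-1+\bq(t_0,i+1)$ when $\bq(t_0,i)=1$. To obtain the positive-part form one must additionally show that $\lambda p(\infty)-1+\bq(t_0,i+1)\ge 0$ at such points; the paper gets this from a lower bound $\dot{\bu}(t_0,i)\ge \lambda\theta\varphi(\theta)$ for all $\theta<1$, which uses the monotonicity of $\beta_n$ and is not a consequence of flow balance alone.
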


Note that the fluid limit \eqref{eq: fluid dynamics} depends on the limiting outdegree distribution through the probability generating function $\varphi$. Only this property affects the asymptotic behavior of the occupancy process and the impact of any other structural properties of the random graph law used to sample the graph disappears in the limit.

Theorem \ref{the: fluid limit} covers the special case where the graph is resampled between successive arrivals as Assumption \ref{ass: fluid limit conditions} holds by Proposition \ref{prop: admissible resampling processes} with $\kappa_n = 0$.
In this case every task is sent to the shortest of $D_n + 1$ queues selected uniformly at random without replacement, which is statistically equivalent to the generalized power-of-$(d + 1)$ supermarket model introduced in Section \ref{sub: main results and design implications}. Proposition \ref{prop: admissible resampling processes} and Theorem \ref{the: fluid limit} show that the fluid limit remains the same under far more general assumptions on the resampling process.

Some arithmetic manipulations yield
\begin{equation*}
	a_i(q) = q(i) \varphi\left(q(i)\right) + \left[p(\infty) - \min\left\{p(\infty), \frac{1 - q(i + 1)}{\lambda}\right\}\right]\ind{q(i) = 1} \quad \text{for all} \quad i \geq 1.
\end{equation*}
If we define $m(q) \defeq \max\set{i \geq 0}{q(i) = 1}$ and note that $\varphi(1) + p(\infty) = 1$, then we obtain the following more explicit expressions:
\begin{equation*}
	a_i(q) =
	\begin{cases}
		1 & \text{if} \quad i < m(q) \quad \text{or} \quad i = 0, \\
		1 - \min\left\{p(\infty), \frac{1 - q(i + 1)}{\lambda}\right\} & \text{if} \quad i = m(q) > 0, \\
		q(i)\varphi\left(q(i)\right) & \text{if} \quad i > m(q).
	\end{cases}
\end{equation*}
As noted earlier, $a_i(q)$ can be interpreted as the limiting probability of a task being sent to a server with at least $i$ tasks when the occupancy state is $q$. In order to explain this, we consider a generalized power-of-$(d + 1)$ supermarket model where the probability mass function of $d$ is $p$ and the $d + 1$ queues are chosen with replacement; this is equivalent in the limit as $n \to \infty$. If $q(i) < 1$ or $p(\infty) = 0$, then the probability that a task is dispatched to a server with at least $i$ tasks is
\begin{equation*}
	\sum_{d = 0}^\infty \left[q(i)\right]^{d + 1} p(d) = q(i) \sum_{d = 0}^\infty \left[q(i)\right]^d p(d) = q(i)\varphi\left(q(i)\right) = a_i(q).
\end{equation*}
If $q(i) = 1$, then $a_i(q) = 1$, unless $p(\infty) > 0$ and $q(i + 1) < 1$, i.e., $i = m(q)$. In this case the service completions at servers with exactly $i$ tasks leave them with only $i - 1$ tasks. Furthermore, servers with just $i - 1$ tasks appear at rate $1 - q(i + 1)$, and loosely speaking, tasks are dispatched to the shortest among all the queues at rate $\lambda p(\infty)$. If the latter rate is smaller, then $q(i)$ ceases to be one immediately, but otherwise tasks are dispatched to servers with exactly $i - 1$ tasks at rate $1 - q(i + 1)$ and $q(i)$ remains equal to one. In the latter case, tasks are sent to servers with exactly $i - 1$ tasks with probability $\left[1 - q(i + 1)\right] / \lambda$ and $a_i(q) = 1 - \left[1 - q(i + 1)\right] / \lambda < 1$.

\begin{remark}
	\label{rem: skorohod equations}
	An alternative expression for \eqref{eq: fluid dynamics} can be obtained by means of the Skorohod reflection mapping with upper reflecting barrier at $1$. Let $\br(t, 0) \defeq \lambda t$ for all $t \geq 0$ and
	\begin{equation*}
		\br(t, i) \defeq \int_0^t \left[\lambda p(\infty) - 1 + \bq(s, i + 1)\right]^+\ind{\bq(s, i) = 1}ds \quad \text{for all} \quad t \geq 0 \quad \text{and} \quad i \geq 1.
	\end{equation*}
	It is clear that for all $i \geq 1$ and $t \geq 0$, we have
	\begin{align*}
		\bq(t, i) &= \bq(0, i) + \lambda \int_0^t \left[\bq(s, i - 1) \varphi\left(\bq(s, i - 1)\right) - \bq(s, i) \varphi\left(\bq(s, i)\right)\right]ds \\
		&+ \br(t, i - 1) - \br(t, i) - \int_0^t \left[\bq(s, i) - \bq(s, i + 1)\right]ds.
	\end{align*}
	Here the reflection terms $\br(i)$ are the unique nondecreasing functions that are flat off $\set{t \geq 0}{\bq(t, i) = 1}$ and prevent $\bq(i)$ from increasing beyond one; see \cite[Remark 5.5]{mythesis} for details. The reflection terms are identically zero for all $i \geq 1$ when $p(\infty) = 0$, and when $p(\infty) = 1$, the above equation coincides with the fluid limit derived in \cite{bhamidi2022near}.
\end{remark}

%\begin{remark}
%If the graph is resampled between any two consecutive arrivals and the random graph law is the point mass at the complete digraph, then the load balancing policy under consideration is JSQ. As noted in Section \ref{sub: related work}, this policy minimizes the mean response time of the tasks. The corresponding fluid limit arises if and only if $p(\infty) = 1$, and this condition can be interpreted as the limiting outdegree distribution being the point mass at infinity. Examples of outdegree distributions that satisfy this are the point mass at $d_n$ or the uniform distribution on $\{0, \dots, d_n\}$ for any constants $d_n$ that approach infinity as $n \to \infty$. Moreover, the fluid limit can be achieved without resampling the graph between any two consecutive arrivals as long as the conditions stated in Assumption \ref{ass: fluid limit conditions} hold.
%\end{remark}

Suppose that the initial occupancy states $\bq_n(0)$ converge weakly to some deterministic limit $q$ and that \eqref{eq: fluid dynamics} has a unique solution such that $\bq(0) = q$. In this case Theorem \ref{the: fluid limit} says that the occupancy processes $\bq_n$ approach the unique solution of \eqref{eq: fluid dynamics} for the initial condition $q$. In Section \ref{sec: properties of fluid trajectories} we prove that \eqref{eq: fluid dynamics} has a unique solution for any given initial condition when the limiting outdegree distribution is nondegenerate and has finite mean, and we show that all the solutions, regardless of the initial condition, converge over time to a unique equilibrium point. In Section \ref{sec: performance in equilibrium} we assume that $\calR_n$ is a Poisson process of rate $\mu_n$ and we use the global attractivity result to characterize the stationary behavior of the system as $n \to \infty$. As noted earlier, the fluid limit is proved in Section \ref{sec: proof of the fluid limit}.

\section{Properties of fluid trajectories}
\label{sec: properties of fluid trajectories}

In most of this section we assume that
\begin{equation}
\label{ass: sparsity condition}
p(\infty) = 0 \quad \text{and} \quad \sum_{d = 0}^\infty dp(d) < \infty,
\end{equation}
which means that the limiting outdegree distribution is nondegenerate and has finite mean.

Since $p(\infty) = 0$, the differential form of \eqref{eq: fluid dynamics} is given by
\begin{equation}
\label{eq: fluid differential equation}
\dot{\bq}(i) = \lambda\left[\bq(i - 1)\varphi\left(\bq(i - 1)\right) - \bq(i)\varphi\left(\bq(i)\right)\right] - \left[\bq(i) - \bq(i + 1)\right] \quad \text{for all} \quad i \geq 1,
\end{equation}
where the equations hold almost everywhere with respect to the Lebesgue measure. A fluid trajectory is a function $\bq$ from $[0, \infty)$ into
\begin{equation*}
Q \defeq \set{q \in \ell_1}{0 \leq q(i + 1) \leq q(i) \leq q(0) = 1\ \text{for all}\ i \geq 1}
\end{equation*}
that satisfies \eqref{eq: fluid differential equation}. The fact that the limiting outdegree distribution has a finite mean gives the following lemma, which we prove in Appendix \ref{app: auxiliary results}.

\begin{lemma}
	\label{lem: derivative of phi}
	If \eqref{ass: sparsity condition} holds, then
	\begin{equation*}
	\lim_{x \to 1^-} \frac{\varphi(1) - \varphi(x)}{1 - x} = \sum_{d = 0}^\infty dp(d) = \lim_{x \to 1^-} \varphi'(x).
	\end{equation*} 
	In other words, $\varphi$ is continuously differentiable on $[0, 1]$.
\end{lemma}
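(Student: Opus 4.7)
The plan is to exploit the fact that $\varphi$ is a power series with nonnegative coefficients summing to $\varphi(1) = \sum_{d=0}^\infty p(d) = 1$ (using $p(\infty) = 0$), and to pass limits inside the two infinite sums via monotone convergence.

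First, I would note that since $0 \leq p(d) \leq 1$, the radius of convergence of $\varphi$ is at least one, so $\varphi$ is real-analytic on $[0,1)$ with termwise derivative
\begin{equation*}
\varphi'(x) = \sum_{d=1}^\infty d\, x^{d-1} p(d) \quad \text{for all} \quad x \in [0, 1).
\end{equation*}
In particular $\varphi$ is continuously differentiable on $[0,1)$. The issue is only what happens at the right endpoint $x = 1$.

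Next, for the limit of $\varphi'(x)$ as $x \to 1^-$, observe that each term $d\,x^{d-1}p(d)$ is nonnegative and monotonically nondecreasing in $x \in [0,1]$, converging to $d\,p(d)$. Monotone convergence therefore yields $\lim_{x \to 1^-} \varphi'(x) = \sum_{d=1}^\infty d\,p(d) = \sum_{d=0}^\infty d\,p(d)$, which is finite by \eqref{ass: sparsity condition}. For the difference quotient, I would write
\begin{equation*}
\frac{\varphi(1) - \varphi(x)}{1 - x} = \sum_{d=1}^\infty \frac{1 - x^d}{1 - x}\,p(d) = \sum_{d=1}^\infty \left(\sum_{k=0}^{d-1} x^k\right) p(d),
\end{equation*}
and again apply monotone convergence (the bracketed partial sum is nonnegative and nondecreasing in $x \in [0,1]$ with limit $d$) to conclude that this expression also tends to $\sum_{d=0}^\infty d\,p(d)$ as $x \to 1^-$.

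Finally, combining the two limits gives that the left derivative of $\varphi$ at $1$ exists and equals the limit of $\varphi'(x)$ as $x \to 1^-$. Setting $\varphi'(1) \defeq \sum_{d=0}^\infty d\, p(d)$ extends $\varphi'$ continuously to all of $[0,1]$, proving continuous differentiability on the closed interval. I do not anticipate a serious obstacle: the only point requiring care is checking the hypotheses of monotone convergence, which hold trivially because all summands are nonnegative and monotone in $x$, and the finiteness of the limit is exactly the assumption $\sum_d d\,p(d) < \infty$.
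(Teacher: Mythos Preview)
Your proof is correct and in fact slightly cleaner than the paper's. Both arguments write the difference quotient as $\sum_{d\geq 1}\frac{1-x^d}{1-x}\,p(d)$, but the paper then invokes the mean value theorem to get the uniform bound $\bigl|\tfrac{1-x^d}{1-x}-d\bigr|\le d$ and finishes with an $\varepsilon$-tail splitting (a dominated-convergence style argument), while you instead expand $\tfrac{1-x^d}{1-x}=\sum_{k=0}^{d-1}x^k$ and apply monotone convergence directly. For the second identity $\lim_{x\to 1^-}\varphi'(x)=\sum_d d\,p(d)$, the paper simply cites Abel's theorem, whereas you again use monotone convergence on the nonnegative, nondecreasing summands $d\,x^{d-1}p(d)$. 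Your route is more self-contained (no mean value theorem, no Abel), at the cost of relying on the special structure that all coefficients are nonnegative; the paper's dominated-convergence argument would still work if some $p(d)$ were allowed to be negative but absolutely summable with $\sum_d d\,|p(d)|<\infty$.
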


This lemma implies that the functions $\varphi$ and $x \mapsto x\varphi(x)$ are Lipschitz on $[0, 1]$, which makes it possible to derive certain properties of fluid trajectories.

\subsection{Existence, uniqueness and monotonicity}
\label{sub: uniqueness and monotonicity}

We begin with an existence and uniqueness result, which is proved in Appendix \ref{app: auxiliary results}.

\begin{proposition}
	\label{prop: existence and uniqueness of solutions}
	Suppose that condition \eqref{ass: sparsity condition} holds. For each $q \in Q$ there exists a unique fluid trajectory $\bq$ such that $\bq(0) = q$. Moreover, $\bq$ is continuous from $[0, \infty)$ into $\ell_1$ and the fluid trajectories are continuous in $D_{\ell_1}[0, \infty)$ with respect to the initial condition.
\end{proposition}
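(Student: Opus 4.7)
The plan is to recast \eqref{eq: fluid differential equation} as a Banach-space ODE $\dot{\bq} = F(\bq)$ on the closed convex subset $Q \subset \ell_1$, and then combine Picard--Lindel\"of with a Nagumo-type viability argument for $Q$. First, Lemma \ref{lem: derivative of phi} ensures that $\varphi$ is continuously differentiable on $[0, 1]$, so $g(x) \defeq x\varphi(x)$ is Lipschitz on $[0, 1]$ with some constant $L$. Defining $\map{F}{Q}{\ell_1}$ coordinatewise by $F(q)(i) \defeq \lambda[g(q(i - 1)) - g(q(i))] - [q(i) - q(i + 1)]$ for $i \geq 1$, the identities $q(0) = 1$, $\sum_{i \geq 1} [g(q(i - 1)) - g(q(i))] = \varphi(1)$, and $\sum_{i \geq 1} [q(i) - q(i + 1)] = q(1)$ show $F(q) \in \ell_1$ with $\norm{F(q)}_1 \leq \lambda\varphi(1) + 1$, while $|g(x) - g(y)| \leq L|x - y|$ applied coordinatewise yields
\begin{equation*}
	\norm{F(q) - F(\tilde{q})}_1 \leq (2\lambda L + 2)\norm{q - \tilde{q}}_1 \quad \text{for all} \quad q, \tilde{q} \in Q.
\end{equation*}

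To construct a global solution that remains in $Q$, I would check that the vector field does not point out of $Q$ at any boundary point. Three cases arise. If $q(i) = 0$, monotonicity forces $q(i + 1) = 0$, so $F(q)(i) = \lambda g(q(i - 1)) \geq 0$. If $q(i) = 1$, then $q(i - 1) = 1$, so $F(q)(i) = -[1 - q(i + 1)] \leq 0$. If $q(i) = q(i + 1)$, then
\begin{equation*}
	F(q)(i) - F(q)(i + 1) = \lambda\left[g(q(i - 1)) - g(q(i))\right] + \left[q(i + 1) - q(i + 2)\right] \geq 0,
\end{equation*}
because $g$ is nondecreasing and $q$ is nonincreasing. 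Applied to the finite-dimensional truncation obtained by freezing $q(N + 1) \equiv 0$, these tangential inequalities show that the truncated polytope $Q_N \subset \R^{N + 1}$ is invariant under the truncated ODE, which is Lipschitz on $\R^{N + 1}$ and therefore has a unique global solution $\bq_N$. Using the $\ell_1$-Lipschitz bound for $F$ and Gr\"onwall's inequality, I would show that $\set{\bq_N}{N \geq 1}$ is Cauchy in $C([0, T], \ell_1)$ for every $T > 0$; the limit $\bq$ lies in $Q$ for all times, solves \eqref{eq: fluid differential equation}, and is the unique such solution by a further application of Gr\"onwall to the Lipschitz bound.

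Continuity of the map $q \mapsto \bq$ from $Q$ into $D_{\ell_1}[0, \infty)$ then follows immediately from the $\ell_1$-Lipschitz estimate: for initial conditions $q, \tilde{q} \in Q$, Gr\"onwall gives
\begin{equation*}
	\norm{\bq(t) - \tilde{\bq}(t)}_1 \leq \norm{q - \tilde{q}}_1 \e^{(2\lambda L + 2)t},
\end{equation*}
which yields uniform convergence on compact time intervals. The main obstacle will be the invariance step: turning the tangential inequalities into a rigorous forward-invariance statement for the infinite-dimensional set $Q$ requires care, and the truncation-plus-passage-to-the-limit argument sketched above seems the cleanest route, as it avoids invoking abstract Nagumo-type viability theorems in $\ell_1$ and reduces everything to sign checks on a polytope in $\R^{N + 1}$. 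Once invariance is in place, existence, uniqueness, and continuous dependence all reduce to standard Picard--Lindel\"of and Gr\"onwall considerations.
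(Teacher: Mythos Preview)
Your uniqueness and continuous-dependence arguments are essentially the same as the paper's: both exploit Lemma~\ref{lem: derivative of phi} to get that $x \mapsto x\varphi(x)$ is Lipschitz on $[0,1]$, bound $\norm{\bq(t) - \tilde{\bq}(t)}_1$ via the coordinatewise estimate, and close with Gr\"onwall. The paper's constant is $3L$ rather than your $2\lambda L + 2$, but the mechanism is identical.

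Where you genuinely diverge is in the existence step. You propose a self-contained analytic construction: check Nagumo-type tangential inequalities on the faces of $Q$, truncate to a finite-dimensional polytope $Q_N$, solve the resulting Lipschitz ODE there, and pass to the limit in $C([0,T],\ell_1)$ via a Cauchy argument. The paper does none of this. Instead it invokes Theorem~\ref{the: fluid limit}: given $q \in Q$, it builds discrete initial occupancies $\bq_n(0) \to q$ in $\ell_1$, constructs the stochastic systems of Section~\ref{sub: stochastic equations}, and extracts a subsequential weak limit $\bq$ which, by Theorem~\ref{the: fluid limit}, automatically satisfies \eqref{eq: fluid dynamics} with $\bq(0) = q$ and is continuous into $\ell_1$. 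In other words, the paper lets the stochastic model manufacture a solution, so invariance of $Q$ comes for free (every $\bq_n(t)$ lies in $Q$) and no viability check or truncation is needed.

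The trade-off is clear. The paper's route is extremely short but relies on the heavy probabilistic machinery of Theorem~\ref{the: fluid limit}, which logically precedes Proposition~\ref{prop: existence and uniqueness of solutions} in the paper's development; this is a standard trick in hydrodynamic-limit literature but makes the ODE theory non-self-contained. Your route is longer and requires you to actually carry out the truncation-Cauchy step (which you only sketch; the comparison of $\bq_N$ with $\bq_M$ needs a little care because the $N$-th equation of the truncated system differs from the full one), but it is purely analytic and would stand on its own without any appeal to the stochastic model.
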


\begin{remark}
	\label{rem: uniqueness}
	The existence part of Proposition \ref{prop: existence and uniqueness of solutions} holds even if we do not assume \eqref{ass: sparsity condition}, and the proof provided in Appendix \ref{app: auxiliary results} does not require any modifications. The uniqueness part is more delicate when \eqref{ass: sparsity condition} does not hold. This property implies that $x \mapsto x\varphi(x)$ is Lipschitz in $[0, 1]$, which plays an important role in the proof of Proposition \ref{prop: existence and uniqueness of solutions}. If condition \eqref{ass: sparsity condition} is replaced by $p(\infty) = 1$, then this also implies uniqueness; see \cite{bhamidi2022near}.
\end{remark}

The following corollary is a consequence of Theorem \ref{the: fluid limit} and Proposition \ref{prop: existence and uniqueness of solutions}.

\begin{corollary}
	\label{cor: unique fluid limit}
	Assume that condition \eqref{ass: sparsity condition} holds and consider a random variable $q$ defined on a probability space $(\Omega, \calF, \prob)$ and with values in $Q$. In addition, let $\bq$ be the stochastic process such that $\bq(\omega)$ is the unique fluid trajectory with initial condition $\bq(\omega, 0) = q(\omega)$. If $\bq_n(0) \Rightarrow q$ in $\ell_1$ as $n \to \infty$, then $\bq_n \Rightarrow \bq$ in $D_{\ell_1}[0, \infty)$ as $n \to \infty$.
\end{corollary}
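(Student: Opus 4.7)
The plan is to combine Theorem \ref{the: fluid limit} with the pathwise uniqueness part of Proposition \ref{prop: existence and uniqueness of solutions} via a standard subsequence argument. Since $\bq_n(0) \Rightarrow q$ in $\ell_1$, Prokhorov's theorem gives that $\set{\bq_n(0)}{n \geq 1}$ is tight, so all the hypotheses of Theorem \ref{the: fluid limit} are in force. Consequently $\set{\bq_n}{n \geq 1}$ is relatively compact in $D_{\ell_1}[0, \infty)$, every subsequence has a further subsequence converging weakly to some limit $\bq^*$, and almost every sample path of $\bq^*$ is continuous and satisfies \eqref{eq: fluid dynamics}.

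Next I would identify the limit. Since $\bq^*$ is almost surely continuous on $[0, \infty)$ into $\ell_1$, the evaluation map $\pi_0 : D_{\ell_1}[0, \infty) \to \ell_1$ given by $\pi_0(f) = f(0)$ is continuous at almost every realization of $\bq^*$, so the continuous mapping theorem yields $\bq_n(0) \Rightarrow \bq^*(0)$ along the chosen subsequence. Combining this with the hypothesis $\bq_n(0) \Rightarrow q$, the initial conditions agree in distribution: $\bq^*(0) \stackrel{d}{=} q$.

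By Proposition \ref{prop: existence and uniqueness of solutions}, the map $\Phi : Q \to D_{\ell_1}[0, \infty)$ that sends each $q' \in Q$ to the unique fluid trajectory with initial condition $q'$ is well defined and continuous, and by construction $\bq(\omega) = \Phi(q(\omega))$ almost surely. Since $\bq^*$ satisfies \eqref{eq: fluid dynamics} almost surely with initial condition $\bq^*(0)$, uniqueness forces $\bq^* = \Phi(\bq^*(0))$ almost surely. Therefore the law of $\bq^*$ in $D_{\ell_1}[0, \infty)$ is the pushforward of the law of $\bq^*(0)$ under $\Phi$, which coincides with the pushforward of the law of $q$ under $\Phi$, i.e., the law of $\bq$. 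Thus every weak subsequential limit of $\set{\bq_n}{n \geq 1}$ has the same distribution as $\bq$, and combined with tightness this implies the claimed convergence $\bq_n \Rightarrow \bq$ in $D_{\ell_1}[0, \infty)$.

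There is no genuine obstacle here: the entire argument rests on Theorem \ref{the: fluid limit} providing both tightness and subsequential fluid-equation limits, and on Proposition \ref{prop: existence and uniqueness of solutions} collapsing all such limits to a single deterministic functional of the initial state. The only point requiring a line of justification is the continuity of the evaluation at zero, which is handled by the almost sure continuity of $\bq^*$ at $t = 0$.
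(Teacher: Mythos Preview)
Your argument is correct and follows essentially the same route as the paper: invoke Theorem \ref{the: fluid limit} for subsequential convergence to a limit satisfying \eqref{eq: fluid dynamics}, use the continuous evaluation-at-zero map together with the continuous mapping theorem to identify the initial law, and then apply the uniqueness and continuity of $\Phi$ from Proposition \ref{prop: existence and uniqueness of solutions} to conclude that every subsequential limit has the law of $\bq$. The only minor difference is that, since $D_{\ell_1}[0,\infty)$ here carries the metric of uniform convergence on compact sets, $\pi_0$ is in fact continuous everywhere, so your extra justification via almost sure continuity of $\bq^*$ is not strictly needed.
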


\begin{proof}
	Consider the function $\map{\Phi}{Q}{D_{\ell_1}[0, \infty)}$ that maps initial conditions to fluid trajectories. By Proposition \ref{prop: existence and uniqueness of solutions}, this function is well-defined and continuous if \eqref{ass: sparsity condition} holds.
	
	Theorem \ref{the: fluid limit} implies that every subsequence of $\set{\bq_n}{n \geq 1}$ has a further subsequence that converges weakly in $D_{\ell_1}[0, \infty)$ to a process $\br$ such that $\br = \Phi\left(\br(0)\right)$ almost surely. The projection $\bx \mapsto \bx(0)$ is continuous from $D_{\ell_1}[0, \infty)$ into $\ell_1$. Therefore, the continuous mapping theorem implies that $\br(0)$ has the same distribution as $q$ and we conclude that $\br$ and $\bq = \Phi(q)$ have the same distribution as well. Thus, every subsequence of $\set{\bq_n}{n \geq 1}$ has a further subsequence that converges weakly in $D_{\ell_1}[0, \infty)$ to $\bq$.
\end{proof}

Consider now the partial order in $\ell_1$ defined by
\begin{equation*}
x \leq y \quad \text{if and only if} \quad x(i) \leq y(i) \quad \text{for all} \quad i \geq 1.
\end{equation*}
The following lemma says that the solutions of \eqref{eq: fluid differential equation} are monotone with respect to this ordering. The lemma is proved in Appendix \ref{app: auxiliary results} using similar arguments as in \cite{vvedenskaya1996queueing}, which proves the monotonicity property when $\varphi(x) = x$ for all $x \in [0, 1]$.

\begin{lemma}
	\label{lem: monotonicity}
	Suppose that assumption \eqref{ass: sparsity condition} holds. If $\bx$ and $\by$ are fluid trajectories such that $\bx(0) \leq \by(0)$, then $\bx(t) \leq \by(t)$ for all future times $t > 0$ as well.
\end{lemma}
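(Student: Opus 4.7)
The plan is to work with the componentwise signed difference $\bu(t) \defeq \bx(t) - \by(t)$ and its positive part $\bu^+(t, i) \defeq [\bu(t, i)]^+$, and to establish a Gronwall-type estimate of the form $\frac{d}{dt}\|\bu^+(t)\|_1 \leq C \|\bu^+(t)\|_1$. Since $\bx(0) \leq \by(0)$ by hypothesis, we have $\|\bu^+(0)\|_1 = 0$; together with the Gronwall inequality, this forces $\bu^+(t) \equiv 0$, which is precisely the desired ordering $\bx(t) \leq \by(t)$ for all $t \geq 0$.

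Two structural features of \eqref{eq: fluid differential equation} make this approach viable. First, Lemma \ref{lem: derivative of phi} implies that the map $f(x) \defeq x\varphi(x) = \sum_{d = 0}^\infty p(d) x^{d + 1}$ is continuously differentiable on $[0, 1]$, hence Lipschitz with some constant $L$. Second, $f$ is nondecreasing on $[0, 1]$ as a sum of nondecreasing functions, so $f(\bx(i)) - f(\by(i))$ and $\bu(i)$ always have the same sign, and $f(\bx(i)) - f(\by(i)) \leq L \bu^+(i)$ for every $i \geq 0$.

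Subtracting the fluid ODE for $\by$ from that for $\bx$ yields, for each $i \geq 1$,
\begin{equation*}
	\dot{\bu}(t, i) = \lambda\left[f\left(\bx(t, i - 1)\right) - f\left(\by(t, i - 1)\right)\right] - \lambda\left[f\left(\bx(t, i)\right) - f\left(\by(t, i)\right)\right] - \bu(t, i) + \bu(t, i + 1).
\end{equation*}
Using the chain rule for the positive part of an absolutely continuous function, $\frac{d}{dt}\bu^+(t, i) = \dot{\bu}(t, i) \ind{\bu(t, i) > 0}$ for a.e. $t \geq 0$. I would then bound the four resulting terms separately: the first by $\lambda L \bu^+(t, i - 1)$ via Lipschitzness of $f$; the second by zero, because $\left[f(\bx(i)) - f(\by(i))\right] \ind{\bu(i) > 0} \geq 0$ by monotonicity of $f$; the third by $-\bu^+(t, i)$ directly; and the fourth by $\bu^+(t, i + 1)$. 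Summing over $i \geq 1$ and noting that $\bu^+(t, 0) = 0$ since $\bx(t, 0) = \by(t, 0) = 1$, the first term sums to $\lambda L \|\bu^+(t)\|_1$ while the last two telescope to $-\bu^+(t, 1) \leq 0$, giving $\frac{d}{dt}\|\bu^+(t)\|_1 \leq \lambda L \|\bu^+(t)\|_1$, to which Gronwall applies.

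The main technical obstacle I anticipate is justifying the termwise differentiation of the $\ell_1$-valued map $t \mapsto \bu^+(t)$. The right-hand side of \eqref{eq: fluid differential equation} is uniformly bounded in $i$ by a constant depending only on $\lambda$ and $L$, so each coordinate of $\bu$ is Lipschitz in $t$ with a uniform constant, while $\bu^+(t)$ remains summable with $\|\bu^+(t)\|_1 \leq 2$. A dominated convergence argument then permits the interchange of sum and derivative; alternatively, one can work with the integrated version of the componentwise bound and sum first to sidestep the interchange altogether.
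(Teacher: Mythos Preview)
Your proof is correct and takes a genuinely different route from the paper's. The paper first reduces the weak inequality $\bx(0)\leq\by(0)$ to a strict one $\bx(0)<\by(0)$ by invoking continuity of fluid trajectories with respect to initial conditions (Proposition~\ref{prop: existence and uniqueness of solutions}), and then argues by contradiction: letting $\tau$ be the first time some coordinate of $\bx-\by$ reaches zero, it shows coordinate by coordinate that $\bx(t,i)-\by(t,i)\leq[\bx(0,i)-\by(0,i)]\e^{-(\lambda L+1)t}<0$ on $[0,\tau]$, contradicting the definition of $\tau$. Your approach bypasses both the reduction step and the contradiction: the Gronwall estimate on $\|(\bx-\by)^+\|_1$ works directly from $\bx(0)\leq\by(0)$ and does not rely on Proposition~\ref{prop: existence and uniqueness of solutions}. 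The trade-off is that you must justify the $\ell_1$-level differentiation (or the interchange of sum and integral in the integrated version), whereas the paper's coordinate-wise argument avoids this entirely. Your integrated-version suggestion handles this cleanly via Fubini, since $\sum_{i\geq1}|\lambda L\bu^+(s,i-1)-\bu^+(s,i)+\bu^+(s,i+1)|\leq(\lambda L+2)\|\bu^+(s)\|_1$, which is locally bounded in $s$ by continuity of $\bx,\by$ in $\ell_1$. One small correction: your claim ``$\|\bu^+(t)\|_1\leq 2$'' is not immediate (the coordinates are in $[0,1]$ but there are infinitely many); what you actually have, and what suffices, is $\|\bu^+(t)\|_1\leq\|\bx(t)\|_1<\infty$, with local boundedness in $t$ following from the $\ell_1$-continuity of fluid trajectories.
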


The above monotonicity property will be important in the next section, where we will use it to prove that \eqref{eq: fluid differential equation} has a globally attractive equilibrium point.

\subsection{Global attractivity}
\label{sub: global attractivity}

In a system with $n$ servers, the condition $\lambda_n < n$ means that the total arrival rate of tasks is smaller than the combined service rate of all the servers. This stability condition turns into $\lambda < 1$ as $n \to \infty$. In this section we assume that the latter condition holds and we study the stability of the differential equation \eqref{eq: fluid differential equation}. First we derive the unique equilibrium of the more general equation \eqref{eq: fluid dynamics}. Recall that this equation is equivalent to \eqref{eq: fluid differential equation} when \eqref{ass: sparsity condition} holds, but note that the following result does not require that \eqref{ass: sparsity condition} holds.

\begin{proposition}
	\label{prop: fixed point}
	If $\lambda < 1$, then the infinite sequence
	\begin{equation*}
	q^*(i) \defeq \begin{cases}
	1 & \text{if} \quad i = 0, \\
	\lambda & \text{if} \quad i = 1, \\
	\lambda q^*(i - 1)\varphi\left(q^*(i - 1)\right) & \text{if} \quad i > 1,
	\end{cases}
	\end{equation*}
	is the unique equilibrium of \eqref{eq: fluid dynamics} within $Q$.
\end{proposition}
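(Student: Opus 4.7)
The plan is to derive the balance equations by substituting the constant profile $\bq(t) \equiv q$ into \eqref{eq: fluid dynamics}; the integrands are then constant, so dividing by $t > 0$ yields the pointwise algebraic identity
\begin{equation*}
\lambda\left[a_{i-1}(q) - a_i(q)\right] = q(i) - q(i+1) \quad \text{for all} \quad i \geq 1.
\end{equation*}
I would then telescope these identities from $i = 1$ to $N$ to obtain $\lambda[a_0(q) - a_N(q)] = q(1) - q(N+1)$. Because $q \in Q \subseteq \ell_1$, we have $q(N+1) \to 0$; this forces $q(N) < 1$ for all large $N$, so the indicator $\ind{q(N) = 1}$ appearing in $a_N(q)$ vanishes, while $q(N)\varphi(q(N)) \to 0$ by continuity. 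Hence $a_N(q) \to 0$, and combining with $a_0(q) = 1$ gives $q(1) = \lambda$. Telescoping the balance equations from any fixed $i \geq 1$ instead of from $1$ and applying the same limiting argument yields $\lambda a_{i-1}(q) = q(i)$ for every $i \geq 1$.

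Next, since $q(1) = \lambda < 1$ and $q$ is nonincreasing by the definition of $Q$, we have $q(i-1) < 1$ for every $i \geq 2$. Consequently $\ind{q(i-1) = 1} = 0$, so $a_{i-1}(q) = q(i-1)\varphi(q(i-1))$ and the identity collapses to the recursion $q(i) = \lambda q(i-1)\varphi(q(i-1))$, which agrees with the definition of $q^*$. This establishes uniqueness.

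For existence I would verify that $q^*$ lies in $Q$ and that it satisfies the balance equations. A short induction gives
\begin{equation*}
\frac{q^*(i)}{q^*(i-1)} = \lambda\varphi\left(q^*(i-1)\right) \leq \lambda\varphi(1) = \lambda\left[1 - p(\infty)\right] \leq \lambda < 1,
\end{equation*}
so $q^*$ is strictly decreasing with $q^*(i) \leq \lambda^i$, placing it in $Q$. The balance equations then hold by construction: for $i \geq 2$ the indicator vanishes as above so $\lambda a_{i-1}(q^*) = q^*(i)$, and for $i = 1$ we have $\lambda a_0(q^*) = \lambda = q^*(1)$; differencing two consecutive instances recovers the required identity. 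The only delicate point is the careful handling of the reflection terms (the positive part and the indicator $\ind{q(i) = 1}$), but this is dispatched once and for all by the observation that $q(1) = \lambda < 1$ combined with monotonicity, so there is no genuine obstacle in the argument.
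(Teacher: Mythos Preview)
Your proposal is correct and follows essentially the same route as the paper: derive the balance equations, telescope and pass to the limit using $q \in \ell_1$ to obtain $q(1) = \lambda$ and then $\lambda a_{i-1}(q) = q(i)$, use $q(1) = \lambda < 1$ together with the monotonicity built into $Q$ to kill the indicator term in $a_{i-1}$, and conclude the recursion; for existence, bound $q^*(i) \leq \lambda q^*(i-1)$ via $\varphi \leq 1$ to get $q^*(i) \leq \lambda^i$ and hence $q^* \in Q$. Your justification that $a_N(q) \to 0$ (simply from $q(N) \to 0$ and $\varphi$ bounded) is in fact slightly cleaner than the paper's, which invokes continuous differentiability of $\varphi$ on $[0,\varepsilon]$ to get a Lipschitz bound; one small cosmetic point is that writing the ratio $q^*(i)/q^*(i-1)$ tacitly assumes $q^*(i-1) > 0$, but the inequality $q^*(i) = \lambda q^*(i-1)\varphi(q^*(i-1)) \leq \lambda q^*(i-1)$ holds directly without dividing.
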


\begin{proof}
	The differential version of \eqref{eq: fluid dynamics} is
	\begin{equation}
	\label{aux: general fluid differential equation}
	\dot{\bq}(i) = \lambda\left[a_{i - 1}(\bq) - a_i(\bq)\right] - \left[\bq(i) - \bq(i + 1)\right] \quad \text{for all} \quad i \geq 1.
	\end{equation}
	It is clear that $q^*(i)$ decreases with $i$, and $q^*$ is an equilibrium point since $q^*(i) = \lambda a_{i - 1}\left(q^*\right)$ for all $i \geq 1$. In order to show that $q^* \in Q$, we prove that $q^*(i) \leq \lambda^i$ by induction; this implies that $q^* \in \ell_1$. The base case $i = 1$ holds by definition, and the inductive step also holds: if the property holds for $i$, then it also holds for $i + 1$ since
	\begin{equation*}
	q^*(i + 1) = \lambda q^*(i) \varphi\left(q^*(i)\right) \leq \lambda q^*(i) \leq \lambda^{i + 1}.
	\end{equation*}
	
	Suppose now that $q \in Q$ is an equilibrium point and let us prove that $q = q^*$. For an arbitrary $\varepsilon \in (0, 1)$, the function $\varphi$ is continuously differentiable in $[0, \varepsilon]$. In addition, $q(i) \leq \varepsilon$ for all large enough $i$ since $q \in \ell_1$. Thus, there exists $L \geq 0$ such that
	\begin{equation*}
	a_i(q) = q(i)\varphi\left(q(i)\right) \leq Lq(i) \quad \text{for all large enough} \quad i \geq 1.
	\end{equation*}
	In particular, $a_i(q) \to 0$ as $i \to \infty$. If we replace $\bq$ by $q$ in the right-hand side of \eqref{aux: general fluid differential equation}, then we can set the resulting expression equal to zero because $q$ is an equilibrium. Therefore,
	\begin{equation*}
	\lambda - q(1) = \lambda a_0(q) - q(1) = \sum_{i = 1}^\infty \lambda\left[a_{i - 1}(q) - a_i(q)\right] - \sum_{i = 1}^\infty \left[q(i) - q(i + 1)\right] = 0,
	\end{equation*}
	so $q(1) = \lambda$. Moreover, we have
	\begin{equation*}
	q(i + 1) = q(i) -\lambda\left[a_{i - 1}(q) - a_i(q)\right] \quad \text{for all} \quad i \geq 1.
	\end{equation*}
	
	Since $q(i) \leq q(1) = \lambda < 1$ for all $i \geq 1$, it follows that $a_i(q) = q(i)\varphi\left(q(i)\right)$ for all $i \geq 1$. We conclude that the right-hand side of the above equation is completely determined by $q(i - 1)$ and $q(i)$. But $q(0) = q^*(0)$ and $q(1) = q^*(1)$, so we must have $q = q^*$. 
\end{proof}

Below we prove that if condition \eqref{ass: sparsity condition} holds, then all fluid trajectories converge to the unique equilibrium point $q^*$ over time. The proof strategy is as in \cite{gamarnik2018delay} and \cite{vvedenskaya1996queueing}. First we note that the monotonicity property established in Lemma \ref{lem: monotonicity} implies that any fluid trajectory can be sandwiched between two solutions of \eqref{eq: fluid differential equation} that remain below and above the equilibrium $q^*$, respectively. Then we prove that both of these solutions converge to $q^*$ over time; we defer the proof of this proposition to Appendix \ref{app: auxiliary results}.

\begin{proposition}
	\label{prop: global attractivity}
	If \eqref{ass: sparsity condition} holds and $\lambda < 1$, then every fluid trajectory $\bq$ satisfies
	\begin{equation*}
	\lim_{t \to \infty} \bq(t, i) = q^*(i) \quad \text{for all} \quad i \geq 0.
	\end{equation*}
\end{proposition}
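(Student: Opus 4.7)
Following the approach of \cite{vvedenskaya1996queueing} and \cite{gamarnik2018delay}, the plan is to sandwich the given trajectory $\bq$ between a lower trajectory $\bu$ and an upper trajectory $\bv$, both of which converge pointwise to $q^*$; Lemma \ref{lem: monotonicity} preserves this sandwich for all $t \geq 0$. For the lower bound, let $\bu$ be the fluid trajectory starting at the minimum of $Q$, namely $\bu(0, 0) = 1$ and $\bu(0, i) = 0$ for $i \geq 1$. Since $\bu(0) \leq \bu(s)$ for every $s \geq 0$, Lemma \ref{lem: monotonicity} gives $\bu(t) \leq \bu(t + s)$, so each $\bu(\cdot, i)$ is non-decreasing in $t$; similarly $\bu(0) \leq q^*$ and the constant trajectory equal to $q^*$ is a solution, so $\bu(t) \leq q^*$ for all $t$. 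Hence $\bu(t, i) \nearrow \bar u(i) \leq q^*(i)$ pointwise, and dominated convergence ($\bu(t) \leq q^* \in \ell_1$) upgrades this to $\bu(t) \to \bar u$ in $\ell_1$. The continuous dependence of fluid trajectories on initial conditions (Proposition \ref{prop: existence and uniqueness of solutions}) then implies that the fluid trajectory $\bw$ starting from $\bar u$ satisfies $\bw(h) = \lim_t \bu(t + h) = \bar u$ for every $h \geq 0$, so $\bar u$ is an equilibrium, and Proposition \ref{prop: fixed point} forces $\bar u = q^*$.

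\textbf{Upper sandwich.} Let $\bv$ be the fluid trajectory with initial condition $\bv(0, i) \defeq \max\{\bq(0, i), q^*(i)\}$, which lies in $Q$ (pointwise maximum of two monotone decreasing $\ell_1$ sequences with leading entry one) and dominates both $\bq(0)$ and $q^*$; by Lemma \ref{lem: monotonicity}, $\bv(t) \geq q^*$ and $\bv(t) \geq \bq(t)$ for every $t$. To show $\bv(t, i) \to q^*(i)$ coordinate-wise, let $P_j(t) \defeq \sum_{k \geq j} \bv(t, k)$ and $P_j^* \defeq \sum_{k \geq j} q^*(k)$; truncating the sum at $k \leq K$, telescoping the fluid equation, and then sending $K \to \infty$ (using $\bv(t) \in \ell_1$, so $\bv(t, K) \to 0$) yields
\begin{equation*}
	\frac{d}{dt}\bigl[P_j(t) - P_j^*\bigr] = \lambda\bigl[f(\bv(t, j - 1)) - f(q^*(j - 1))\bigr] - \bigl[\bv(t, j) - q^*(j)\bigr],
\end{equation*}
where $f(x) \defeq x\varphi(x)$ and the equilibrium recursion $q^*(j) = \lambda f(q^*(j - 1))$ from Proposition \ref{prop: fixed point} cancels the constant contribution. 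For $j = 1$ the first bracket vanishes (since $f(1) = 1$ under \eqref{ass: sparsity condition}), so $P_1(t) - P_1^* \geq 0$ is non-increasing, hence $\int_0^\infty [\bv(s, 1) - q^*(1)]ds < \infty$; combined with the uniform bound on $|\dot{\bv}(\cdot, 1)|$ and non-negativity of $\bv(\cdot, 1) - q^*(1)$, this forces $\bv(t, 1) \to q^*(1)$. The plan is then to iterate on $j$: once $\bv(\cdot, j - 1) - q^*(j - 1)$ is integrable on $[0, \infty)$, Lipschitz continuity of $f$ on $[0, 1]$ (Lemma \ref{lem: derivative of phi}) makes the first term on the right integrable too, and combining this with the a priori bound $0 \leq P_j(t) - P_j^* \leq P_1(0) - P_1^*$, the displayed equation forces $\bv(\cdot, j) - q^*(j)$ to be integrable; Lipschitz continuity plus non-negativity then give $\bv(t, j) \to q^*(j)$, closing the induction. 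Sandwiching $\bu(t) \leq \bq(t) \leq \bv(t)$ finishes the proof.

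\textbf{Main obstacle.} The delicate step is the upper sandwich. The obvious Lyapunov $V(t) \defeq \|\bv(t) - q^*\|_1 = P_1(t) - P_1^*$ is non-increasing, but its limit need not vanish, because mass could in principle escape to infinity in $\ell_1$; hence one cannot conclude $\bv(t) \to q^*$ in $\ell_1$. The partial-sum induction above circumvents this by requiring only coordinate-wise integrability, which is exactly what the monotone Lyapunov provides at $j = 1$ and which then propagates to higher $j$ via the Lipschitz structure of $f$ on $[0, 1]$ guaranteed by \eqref{ass: sparsity condition} and Lemma \ref{lem: derivative of phi}.
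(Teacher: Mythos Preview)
Your argument is correct. The upper-sandwich half is essentially the paper's own proof: define the tail sums $P_j(t)=\sum_{k\ge j}\bv(t,k)$, derive $\dot P_j = \lambda f(\bv(j-1)) - \bv(j)$, subtract the equilibrium identity, and induct on $j$ to obtain $\int_0^\infty[\bv(s,j)-q^*(j)]\,ds<\infty$; the paper carries out exactly this computation (with the Lipschitz constant written as $1+L$ for $\varphi$ rather than for $x\varphi(x)$) and then appeals, as you do, to non-negativity plus Lipschitz continuity of $\bv(\cdot,j)$ to deduce pointwise convergence.

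Where you differ is in the lower half. The paper simply says the case $\bq(0)\le q^*$ is ``analogous'' and would rerun the tail-sum induction with reversed inequalities. You instead start from the minimal element of $Q$, use Lemma~\ref{lem: monotonicity} to get monotonicity of $\bu(t)$ in $t$ and the upper bound $\bu(t)\le q^*$, pass to the limit $\bar u$ in $\ell_1$ by dominated convergence, and then invoke continuity in initial conditions (Proposition~\ref{prop: existence and uniqueness of solutions}) together with the semigroup property to conclude that $\bar u$ is a fixed point, hence $\bar u=q^*$ by Proposition~\ref{prop: fixed point}. This $\omega$-limit argument is cleaner and avoids repeating the tail-sum computation, at the cost of relying on the uniqueness of the equilibrium and the continuity of the flow map, which the paper has already established but does not invoke here.
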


It follows from Corollary \ref{cor: unique fluid limit} that if the initial occupancy states $\bq_n(0)$ converge weakly to a deterministic $q \in Q$, then the occupancy processes $\bq_n$ approach the unique fluid trajectory with initial condition $q$ as $n \to \infty$. Hence, the equilibrium point $q^*$ provides information about the equilibrium behavior of large systems. In the next section we formalize this idea.

\section{Equilibrium behavior}
\label{sec: performance in equilibrium}

In this section we assume that $\calR_n$ is a Poisson process of rate $\mu_n$ that is independent of all the other stochastic primitives, listed in the beginning of Section \ref{sec: proof of the fluid limit}. This implies that the process $(\bX_n, \bG_n)$ is a continuous-time Markov chain. We establish that this process is ergodic provided that $\lambda_n < n$ and we show that the sequence of stationary occupancy states $q_n$ converges weakly to the equilibrium point $q^*$ when \eqref{ass: sparsity condition} holds and $\lambda < 1$. Then we provide a lower bound for $q^*$ when the mean of the limiting outdegree distribution is upper bounded by a constant and we establish when the lower bound is tight. In particular, we give a tight lower bound for the fraction of servers with at least $i$ tasks in equilibrium.

\subsection{Convergence of stationary distributions}
\label{sub: convergence of stationary distributions}

Suppose that $\bG_n$ is sampled from a random graph law $G_n$ with support $\supp(G_n)$. Then the continuous-time Markov chain $(\bX_n, \bG_n)$ takes values in $\N^n \times \supp(G_n)$, but we define its state space as the set of all elements of $\N^n \times \supp(G_n)$ that can be reached from a state of the form $(0, g)$ with $g \in \supp(G_n)$. In this way we obtain an irreducible Markov chain. The following proposition establishes that this Markov chain is also positive-recurrent provided that $\lambda_n < n$. This natural stability condition says that the total arrival rate of tasks is smaller than the combined service rate of all the servers.

\begin{proposition}
	\label{prop: ergodicity}
	If $\lambda_n < n$, then $(\bX_n, \bG_n)$ is positive-recurrent.
\end{proposition}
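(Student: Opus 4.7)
The plan is to apply Foster's positive-recurrence criterion to the continuous-time Markov chain $(\bX_n, \bG_n)$ with the Lyapunov function $V(\bx, g) \defeq \sum_{u = 1}^n \bx(u)^2$, which depends only on the queue-length coordinates. The state space sits inside $\N^n \times \supp(G_n)$, and $\supp(G_n)$ is finite because there are only finitely many simple directed graphs on $V_n$; hence every sub-level set of $V$ is a finite subset of the state space. The total jump rate out of any state is bounded by $\lambda_n + n + \mu_n$, so the chain is non-explosive and $V$ lies in the domain of its generator $\calL$.

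I would then compute $\calL V$ transition by transition. An arrival at server $u$ occurs at rate $\lambda_n / n$ and routes the task to some $v(u) \in \argmin{v} \set{\bx(v)}{v = u\ \text{or}\ (u, v) \in g}$, so the change in $V$ equals $2\bx(v(u)) + 1$. Because $u$ itself belongs to the minimizing set, $\bx(v(u)) \leq \bx(u)$ uniformly in $g$, and the total arrival contribution to $\calL V$ is at most $\sum_u (\lambda_n / n)(2\bx(u) + 1) = (2\lambda_n / n) L + \lambda_n$, where $L \defeq \sum_u \bx(u)$. A departure from server $u$ occurs at rate $\ind{\bx(u) \geq 1}$ and changes $V$ by $-2\bx(u) + 1$, yielding a total departure contribution of $-2L + N(\bx)$ with $N(\bx) \defeq \sum_u \ind{\bx(u) \geq 1} \leq n$. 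Resampling does not modify $\bx$ and therefore contributes nothing. Combining these pieces,
\begin{equation*}
\calL V(\bx, g) \leq 2L\left(\frac{\lambda_n}{n} - 1\right) + \lambda_n + n.
\end{equation*}

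Since $\lambda_n < n$, the coefficient of $L$ is strictly negative, so there is a finite threshold $L^*$ such that $\calL V(\bx, g) \leq -1$ whenever $L > L^*$. The set $C \defeq \set{(\bx, g) \in \N^n \times \supp(G_n)}{L(\bx) \leq L^*}$ is finite and $V$ is bounded on $C$, so Foster's criterion for continuous-time Markov chains yields positive recurrence, given that the chain is irreducible on its state space by construction.

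The only subtlety is that the arrival contribution to $\calL V$ \emph{a priori} couples the drift to the random graph through the dispatching rule and to the full queue-length profile through the neighborhood structure. The crucial simplification is that the argmin defining $v(u)$ is taken over a set containing $u$ itself, so $\bx(v(u)) \leq \bx(u)$ pointwise in $g$ and $\bx$. This uniform estimate removes the graph and the neighborhood structure from the drift bound and reduces the verification of Foster's criterion to the routine calculation one would perform for a standard open queueing network.
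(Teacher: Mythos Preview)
Your argument is correct and considerably more direct than the paper's own proof. The key observation that $u$ always belongs to its own neighbourhood, so that $\bx(v(u)) \leq \bx(u)$ regardless of the graph, is exactly what makes the quadratic Lyapunov function work uniformly in $g$; after that the Foster--Lyapunov verification is routine, and non-explosiveness together with finiteness of $\supp(G_n)$ takes care of the domain and compactness issues. The paper takes a different route: it couples $(\bX_n, \bG_n)$ with $n$ independent $M/M/1$ queues $\bY_n$ (each with load $\rho_n = \lambda_n/n < 1$) via a labelling-by-queue-length construction, obtains the pathwise majorisation $\sum_{i \geq j} \bq_n(t,i) \leq \sum_{i \geq j} \br_n(t,i)$, and then bounds the mean return time to $(0,g)$ by decomposing over the successive visits of $\bY_n$ to $0$ and the times at which the graph equals $g$. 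Your proof is shorter and self-contained; the paper's coupling, however, is not wasted effort, since the same stochastic domination is reused later (Lemma~\ref{lem: tightness and uniform integrability}) to establish tightness and uniform integrability of the stationary occupancy states.
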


\begin{proof}
	It suffices to establish that $(0, g)$ is a positive-recurrent state of $(\bX_n, \bG_n)$ for any arbitrary $g \in \supp(G_n)$. For this purpose, let $\bY_n$ denote the process that describes the number of tasks across $n$ independent single-server queues, each with exponential service times of unit mean and Poisson arrivals of intensity $\rho_n \defeq \lambda_n / n < 1$. We will bound the mean recurrence time of $(0, g)$ for $(\bX_n, \bG_n)$ using the mean recurrence time of the empty system for $\bY_n$, which is finite since the Markov chain $\bY_n$ is ergodic.
	
	Define the occupancy process of $\bY_n$ by
	\begin{equation*}
	\br_n(t, i) \defeq \frac{1}{n}\sum_{j = 1}^n \ind{\bY_n(t, j) \geq i} \quad \text{for all} \quad t \geq 0 \quad \text{and} \quad i \geq 0.
	\end{equation*}
	The systems $(\bX_n, \bG_n)$ and $\bY_n$ can be constructed on a common probability space such that the arrivals and departures are coupled as in \cite[Proposition 2.1]{mukherjee2018asymptotically}. Specifically, at any given time let us attach the labels $\{1, \dots, n\}$ to the servers in each system, in increasing order of the queue lengths, with ties broken arbitrarily. The labels change over time and are unrelated to the identities of the servers, they are just auxiliary objects used for coupling the two systems. Both systems have the same arrival times and every task appears at a server with the same label in both systems. Also, for each label potential departures occur simultaneously in both systems as a Poisson process of unit rate, and a server finishes a task if and only if a potential departure occurs for the attached label and the server has at least one task. If $\bX_n(0) = \bY_n(0)$, then this construction is such that
	\begin{equation}
	\label{eq: coupling}
	\sum_{i = j}^\infty \bq_n(t, i) \leq \sum_{i = j}^\infty \br_n(t, i) \quad \text{for all} \quad t \geq 0 \quad \text{and} \quad j \geq 1
	\end{equation}
	with probability one. This holds because the label attached to the server to which the task is dispatched in $\left(\bX_n, \bG_n\right)$ is always smaller than or equal to the label attached to the server to which the task is dispatched in $\bY_n$; we refer to \cite[Appendix A]{mukherjee2018asymptotically} for details. Note that the resampling times and the graphs selected at each resampling time are independent of the history of $\bX_n$, and therefore also independent of $\bY_n$.
	
	We adopt the above construction with $\left(\bX_n(0), \bG_n(0)\right) = (0, g)$ and $\bY_n(0) = 0$. By \eqref{eq: coupling},
	\begin{equation*}
	\sum_{u = 1}^n \bX_n(t, u) = n \sum_{i = 1}^\infty \bq_n(t, i) \leq n \sum_{i = 1}^\infty \br_n(t, i) = \sum_{u = 1}^n \bY_n(t, u) \quad \text{for all} \quad t \geq 0.
	\end{equation*}
	If $\supp(G_n) = \{g\}$, then the fact that $\bY_n$ is positive-recurrent implies that $(0, g)$ is a positive-recurrent state of $(\bX_n, \bG_n)$ because $\bY_n = 0$ implies that $\bX_n = 0$. Therefore, we assume from now on that $G_n$ can take more than one value, or equivalently $P(G_n = g) < 1$.
	
	Denote the first recurrence time of state $(0, g)$ of $(\bX_n, \bG_n)$ by $\tau$ and let $\zeta_k$ denote the $k$-th passage time of state zero of $\bY_n$. Also, consider the disjoint events
	\begin{equation*}
	A_k \defeq \left\{\bG_n\left(\zeta_j\right) \neq g\ \text{for all}\ 1 \leq j < k\ \text{and}\ \bG_n\left(\zeta_k\right) = g\right\}
	\end{equation*}
	and let $\theta_k \defeq P\left(A_k\right)$ for all $k \geq 1$. Let $\nu \defeq P(Z < \zeta_1)$ denote the probability that $\bG_n$ is resampled between two successive visits to state zero of the process $\bY_n$, where $Z$ is exponentially distributed with mean $1 / \mu_n$ and independent of everything else. The union of the disjoint sets $A_k$ has probability one because
	\begin{align*}
	P\left(\bG_n(\zeta_j) \neq g\ \text{for all}\ j \geq 1\right) &= \lim_{k \to \infty} P\left(\bG_n(\zeta_k) \neq g\ \text{for all}\ 1 \leq j \leq k\right) \\
	&= \lim_{k \to \infty} P\left(\bG_n\left(\zeta_1\right) \neq g\right) \prod_{i = 1}^{k - 1}\cprob*{\bG_n\left(\zeta_{i + 1}\right) \neq g | \bG_n\left(\zeta_i\right) \neq g} \\
	&= \lim_{k \to \infty} \nu P\left(G_n \neq g\right)\left[1 - \nu + \nu P\left(G_n \neq g\right)\right]^{k - 1} = 0.
	\end{align*}
	Moreover, recall that $\bY_n = 0$ implies that $\bX_n = 0$. Hence, we have
	\begin{align*}
	E[\tau] = \sum_{k = 1}^\infty \expect*{\tau|A_k} \theta_k \leq \sum_{k = 1}^\infty \expect*{\zeta_k|A_k} \theta_k = \expect*{\zeta_1|A_1}\theta_1 + \sum_{k = 2}^\infty \expect*{\zeta_k|A_k} \theta_k.
	\end{align*}
	
	Note that $\zeta_k$ is not independent of $A_k$. For example, $A_k$ implies that $\bG_n$ is resampled before $\zeta_1$ and between $\zeta_{k - 1}$ and $\zeta_k$ when $k > 1$; in this case $\zeta_1$ is larger than the first resampling time and $\zeta_k$ is larger than the resampling time that follows $\zeta_{k - 1}$. But if we let
	\begin{align*}
	&B_1 \defeq \left\{\bG_n(0) = g,\ \bG_n(\zeta_1) \neq g\right\}, \\
	&B_2 \defeq \left\{\bG_n(0) \neq g,\ \bG_n(\zeta_1) \neq g\right\}, \\
	&B_3 \defeq \left\{\bG_n(0) \neq g,\ \bG_n(\zeta_1) = g\right\},
	\end{align*}
	then it is possible to write
	\begin{align*}
	\expect*{\zeta_k | A_k} &= \expect*{\zeta_1 + \sum_{i = 1}^{k - 2} \left(\zeta_{i + 1} - \zeta_i\right) + \zeta_k - \zeta_{k - 1} \Big| A_k} \\
	&= \expect*{\zeta_1 | B_1} + (k - 2) \expect*{\zeta_1 | B_2} + \expect*{\zeta_1 | B_3} \quad \text{for all} \quad k \geq 1.
	\end{align*}
	where the expectations in the last two expressions are with respect to coupled processes $(\bX_n, \bG_n)$ and $\bY_n$ with initial states such that $\bX_n(0) = \bY_n(0) = 0$ and $\bG_n(0) \neq g$; the specific value of $\bG_n(0)$ does not affect these expectations. For the equality in the second line, let $\zeta_0 \defeq 0$ and note that the distribution of $\zeta_{i + 1} - \zeta_i$ given $A_k$ is the same as the distribution of the first recurrence time $\zeta_1$ of state zero for $\bY_n$ given specific conditions on the graph $\bG_n$ at the start and end of the recurrence period, captured by the events $B_j$.
	\begin{enumerate}
		\item[(a)] If $i = 0$, then $\bG_n = g$ at the start of the recurrence period since $\bG_n(0) = g$, and $\bG_n \neq g$ at the end of the recurrence period by definition of $A_k$.
		\item[(b)] If $1 \leq i \leq k - 2$, then $\bG_n \neq g$ at the start and end of the recurrence period since $A_k$ states that this holds at the $i$-th and $(i + 1)$-th passage times of state zero for $\bY_n$.
		\item[(c)] Similarly, $\bG_n \neq g$ at the start of the recurrence period and $\bG_n = g$ at the end of the recurrence period by definition of $A_k$ if $i = k - 1$.
	\end{enumerate}
	
	The probabilities $\theta_k$ add up to one, thus
	\begin{equation*}
	E[\tau] \leq \expect*{\zeta_1 | A_1} \theta_1 + \expect*{\zeta_1 | B_1} + \expect*{\zeta_1 | B_2}\sum_{k = 2}^\infty (k - 2)\theta_k + \expect*{\zeta_1 | B_3}.
	\end{equation*}
	Since $\bY_n$ is positive-recurrent, all the conditional expectations on the right-hand side are finite, so it only remains to prove that the summation is finite as well.
	
	Note that for each $k > 1$ we have
	\begin{equation*}
	\theta_k = P\left(\bG_n\left(\zeta_1\right) \neq g\right) \prod_{i = 1}^{k - 2}\cprob*{\bG_n\left(\zeta_{i + 1}\right) \neq g | \bG_n\left(\zeta_i\right) \neq g} \cprob*{\bG_n\left(\zeta_k\right) = g | \bG_n\left(\zeta_{k - 1}\right) \neq g}.
	\end{equation*}
	Recall that $\nu = P(Z < \zeta_1)$ is the probability that the graph $\bG_n$ is resampled between two successive visits to state zero of $\bY_n$. Since $\bG_n(0) = g$, we have
	\begin{align*}
	\theta_k &= \nu P\left(G_n \neq g\right) \left[1 - \nu + \nu P\left(G_n \neq g\right)\right]^{k - 2} \nu P\left(G_n = g\right) \\
	&= \nu^2 P\left(G_n \neq g\right) P\left(G_n = g\right) \left[1 - \nu + \nu P\left(G_n \neq g\right)\right]^{k - 2}.
	\end{align*}
	
	We conclude that $E[\tau] < \infty$ because $\delta \defeq 1 - \nu + \nu P(G_n \neq g) < 1$ and thus
	\begin{equation*}
	\sum_{k = 2}^\infty (k - 2)\theta_k = \nu^2 P\left(G_n \neq g\right) P\left(G_n = g\right) \sum_{k = 1}^\infty k \delta^k = \frac{\nu^2 P\left(G_n \neq g\right) P\left(G_n = g\right) \delta}{\left(1 - \delta\right)^2}  < \infty.
	\end{equation*}
	This completes the proof.
\end{proof}

At any given time $t$, the occupancy state $\bq_n(t)$ is a deterministic function of $\bX_n(t)$, thus the distribution of $\bX_n(t)$ determines the distribution of $\bq_n(t)$. If $\lambda_n < n$, then the above proposition implies that the Markov chain $\left(\bX_n, \bG_n\right)$ has a unique stationary distribution. We define the stationary distribution of the occupancy state as the distribution of $\bq_n(t)$ determined by $\bX_n(t)$ when $\left(\bX_n, \bG_n\right)$ has the stationary distribution.

\begin{lemma}
	\label{lem: tightness and uniform integrability}
	Suppose that $\lambda < 1$ and $\lambda_n < n$ for all $n$. The sequence of stationary occupancy states $\set{q_n}{n \geq 1}$ is tight in $\ell_1$ and $\set{\norm{q_n}_1}{n \geq 1}$ is uniformly integrable.
\end{lemma}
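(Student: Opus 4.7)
The strategy is to leverage the stochastic coupling already constructed in the proof of Proposition~\ref{prop: ergodicity}, between $\bq_n$ and the occupancy process $\br_n$ of $n$ independent $M/M/1$ queues $\bY_n$ with arrival rate $\rho_n \defeq \lambda_n/n$ and unit service rate. Starting both systems from empty, the pathwise bound~\eqref{eq: coupling} gives $\sum_{i \geq j} \bq_n(t, i) \leq \sum_{i \geq j} \br_n(t, i)$ almost surely for all $t \geq 0$ and $j \geq 1$. Passing $t \to \infty$, Proposition~\ref{prop: ergodicity} yields convergence in distribution of the left-hand side to $\sum_{i \geq j} q_n(i)$, while classical $M/M/1$ theory yields convergence of the right-hand side to $\sum_{i \geq j} r_n(i)$, where $r_n$ is the stationary occupancy state of $n$ i.i.d.\ geometric queues with parameter $\rho_n$. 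Since stochastic dominance is preserved under weak limits, I obtain $\sum_{i \geq j} q_n(i) \leq_{\mathrm{st}} \sum_{i \geq j} r_n(i)$ for every $j \geq 1$.

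The auxiliary system is fully explicit: $\E[r_n(i)] = \rho_n^i$, hence $\E\bigl[\sum_{i \geq j} r_n(i)\bigr] = \rho_n^j/(1 - \rho_n)$, which is bounded uniformly in $n$ and vanishes as $j \to \infty$ uniformly in $n$ because $\rho_n \to \lambda < 1$. For tightness in $\ell_1$, I would combine this with the standard criterion that a set of nonincreasing nonnegative sequences is relatively compact if and only if it is norm-bounded with uniformly vanishing tails. Given $\varepsilon > 0$, Markov's inequality applied to $\|q_n\|_1$ together with the stochastic domination furnishes a constant $M$ with $P(\|q_n\|_1 > M) < \varepsilon/2$ for all $n$; similarly, for each $k \geq 1$ one chooses $j_k$ with $P\bigl(\sum_{i \geq j_k} q_n(i) > 2^{-k}\bigr) < \varepsilon\, 2^{-k-1}$ for all $n$. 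The compact subset of $Q$ carved out by these constraints then contains $q_n$ with probability at least $1 - \varepsilon$.

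For uniform integrability of $\{\|q_n\|_1\}$, I would observe that $\|q_n\|_1 \leq \|r_n\|_1 = (1/n)\sum_{u=1}^n Y_u^{(n)}$, where $Y_u^{(n)}$ are i.i.d.\ geometric random variables of mean $\rho_n/(1 - \rho_n)$ and variance $\rho_n/(1 - \rho_n)^2$. A direct second-moment computation gives $\sup_n \E[\|r_n\|_1^2] < \infty$, hence $\sup_n \E[\|q_n\|_1^2] < \infty$, so $\{\|q_n\|_1\}$ is bounded in $L^2$ and thus uniformly integrable. The main subtlety lies in making rigorous the passage from the pathwise coupling to stochastic domination at stationarity, which requires verifying weak convergence of both coupled processes to their unique stationary laws and invoking preservation of stochastic dominance under weak limits; everything else reduces to explicit computations with the geometric stationary distribution.
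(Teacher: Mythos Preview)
Your proposal is correct and follows essentially the same route as the paper: both couple with $n$ independent $M/M/1$ queues via Proposition~\ref{prop: ergodicity}, pass the pathwise bound~\eqref{eq: coupling} to stationarity to obtain stochastic domination, then use Markov's inequality on the geometric tails for tightness (the paper invokes the tail criterion of Lemma~\ref{lem: tightness in l1} rather than building an explicit compact set, but this is cosmetic) and an $L^2$ bound on $\norm{r_n}_1$ for uniform integrability. Two minor slips worth correcting: after passing to stationarity the inequality $\norm{q_n}_1 \leq \norm{r_n}_1$ holds only in the stochastic order, which is all you need for moment comparison; and $\norm{r_n}_1 = 1 + \frac{1}{n}\sum_{u=1}^n Y_u^{(n)}$ since $r_n(0) = 1$, though this constant shift does not affect the $L^2$ bound.
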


\begin{proof}
	In order to prove that $\set{q_n}{n \geq 1}$ is tight in $\ell_1$, it suffices to show that
	\begin{equation*}
	\lim_{m \to \infty} \limsup_{n \to \infty} P\left(\sum_{i > m} q_n(i) > \varepsilon\right) = 0 \quad \text{for all} \quad \varepsilon > 0;
	\end{equation*}
	this follows from \cite[Lemma 2]{mukherjee2018universality}, which is also stated in Lemma \ref{lem: tightness in l1} of Appendix \ref{app: tightness of occupancy processes}.
	
	Consider the coupled construction introduced in the proof of Proposition \ref{prop: ergodicity}, with any initial state such that $\bX_n(0) = \bY_n(0)$. By ergodicity and \eqref{eq: coupling}, we have
	\begin{equation}
	\label{aux: bound for stationary distributions}
	P\left(\sum_{i > m} q_n(i) > \varepsilon\right) \leq P\left(\sum_{i > m} r_n(i) > \varepsilon\right) \quad \text{for all} \quad m \geq 0 \quad \text{and} \quad \varepsilon > 0.
	\end{equation}
	If $Y_n$ has the stationary distribution of $\bY_n$, then $P(Y_n(u) = k) = (1 - \rho_n)\rho_n^k$ and thus
	\begin{align*}
	P\left(\sum_{i > m} r_n(i) > \varepsilon\right) &= P\left(\frac{1}{n}\sum_{u = 1}^n \left[Y_n(u) - m\right]^+ > \varepsilon\right) \\
	&\leq \frac{1}{\varepsilon} E\left[\frac{1}{n}\sum_{u = 1}^n \left[Y_n(u) - m\right]^+\right] \\
	&= \frac{1}{\varepsilon} E\left[Y_n(1) - m\right]^+ = \frac{1}{\varepsilon} \sum_{k = m}^\infty (k - m)(1 - \rho_n)\rho_n^k = \frac{\rho_n^{m + 1}}{\varepsilon (1 - \rho_n)}.
	\end{align*}
	The first step uses Markov's inequality and the subsequent steps use the independence of the single-server queues and the steady-state distribution of each one such queue. Since
	\begin{equation*}
	\lambda < 1 \quad \text{and} \quad \lim_{m \to \infty} \lim_{n \to \infty} \frac{\rho_n^{m + 1}}{\varepsilon (1 - \rho_n)} = \lim_{m \to \infty} \frac{\lambda^{m + 1}}{\varepsilon (1 - \lambda)} = 0,
	\end{equation*}
	we see that $\set{q_n}{n \geq 1}$ is tight in $\ell_1$; recall that we had defined $\rho_n = \lambda_n / n$.
	
	In order to prove that the sequence $\set{\norm{q_n}_1}{n \geq 1}$ is uniformly integrable, consider Bernoulli trials with success probability $\rho_n$. The probability that $Y_n(u) = k$ is equal to the probability that a failure occurs after $k$ successful trials. Hence,
	\begin{equation*}
	P\left(\sum_{u = 1}^n Y_n(u) = k\right)
	\end{equation*}
	is equal to the probability that there are $k$ successful trials before $n$ failures occur;  i.e., the total number of tasks is negative binomial with parameters $n$ and $\rho_n$. By \eqref{aux: bound for stationary distributions},
	\begin{equation*}
	E\left[\norm{q_n}_1^2\right] = \sum_{k = 1}^\infty P\left(\norm{q_n}_1^2 \geq k\right) \leq \sum_{k = 1}^\infty P\left(\norm{r_n}_1^2 \geq k\right) = E\left[\norm{r_n}_1^2\right]. 
	\end{equation*}
	Moreover, the total number of tasks in the system is
	\begin{equation*}
	 n\left(\norm{r_n}_1 - 1\right) = n \sum_{i = 1}^\infty r_n(i) = \sum_{i = 1}^\infty ni\left[r_n(i) - r_n(i + 1)\right] = \sum_{u = 1}^n Y_n(u).
	\end{equation*}
	Indeed, $ni\left[r_n(i) - r_n(i + 1)\right]$ is the number of tasks in servers with exactly $i$ tasks. Thus,
	\begin{align*}
	E\left[\norm{q_n}_1^2\right] \leq E\left[\norm{r_n}_1^2\right] &= E\left[\left(1 + \frac{1}{n}\sum_{u = 1}^n Y_n(u)\right)^2\right] \\
	&= 1 + \frac{2}{n}E\left[\sum_{u = 1}^n Y_n(u)\right] + \frac{1}{n^2}E\left[\left(\sum_{u = 1}^n Y_n(u)\right)^2\right]\\
	&= 1 + \frac{2\rho_n}{1 - \rho_n} + \frac{1}{n^2}\left[\left(\frac{n\rho_n}{1 - \rho_n}\right)^2 + \frac{n\rho_n}{(1 - \rho_n)^2}\right].
	\end{align*}
	The right-hand side has a finite limit as $n \to \infty$ and thus the left-hand side is uniformly bounded across $n$, which implies that $\set{\norm{q_n}_1}{n \geq 1}$ is uniformly integrable.
\end{proof} 

As noted in Section \ref{sub: global attractivity}, the equilibrium point $q^*$ provides some information about the behavior of a large system in steady state. The following theorem formalizes this idea in the situation where the graph is resampled as a Poisson process. The proof relies on an interchange of limits argument.

\begin{theorem}
	\label{the: interchange of limits}
	Assume that $\calR_n$ is a Poisson process of rate $\mu_n$ for all $n$ and that conditions \eqref{eq: limiting degree distribution condition}, \eqref{eq: arrival and resampling rates conditions} and \eqref{ass: sparsity condition} hold. In addition, suppose that $\lambda < 1$ and $\lambda_n < n$ for all $n$. Then the sequence of stationary occupancy states $q_n$ converges weakly in $\ell_1$ to the equilibrium $q^*$.
\end{theorem}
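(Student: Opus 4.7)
The plan is to carry out a standard interchange of limits argument. Lemma \ref{lem: tightness and uniform integrability} shows that the sequence of stationary occupancy states $\{q_n\}$ is tight in $\ell_1$. Since $Q$ is closed in $\ell_1$ and each $q_n$ takes values in $Q$ almost surely, Prokhorov's theorem produces, from any subsequence of $\{q_n\}$, a further subsequence $\{q_{n_k}\}$ with $q_{n_k} \Rightarrow q_\infty$ in $\ell_1$ for some random $q_\infty$ with values in $Q$. It suffices to prove that $q_\infty = q^*$ almost surely, since the deterministic point $q^*$ is then the only accumulation point of $\{q_n\}$.

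Next, I would launch each occupancy process $\bq_{n_k}$ from its stationary state, so that $\bq_{n_k}(t) \stackrel{d}{=} q_{n_k}$ for every $t \geq 0$ by stationarity of the Markov chain $(\bX_{n_k}, \bG_{n_k})$ (whose ergodicity is provided by Proposition \ref{prop: ergodicity}). The Poisson assumption on $\calR_{n_k}$ falls within case (c) of Proposition \ref{prop: admissible resampling processes} with $\calR$ a unit-rate Poisson process (whose exponential holding times have unit mean and unit variance), so Assumption \ref{ass: fluid limit conditions} holds thanks to \eqref{eq: limiting degree distribution condition} and \eqref{eq: arrival and resampling rates conditions}. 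Corollary \ref{cor: unique fluid limit} then gives $\bq_{n_k} \Rightarrow \bq_\infty$ in $D_{\ell_1}[0, \infty)$, where $\bq_\infty$ is the unique and almost surely continuous fluid trajectory started from $q_\infty$. Continuity of $\bq_\infty$ makes the evaluation map $x \mapsto x(t)$ continuous at $\bq_\infty$ almost surely, so the continuous mapping theorem yields $\bq_{n_k}(t) \Rightarrow \bq_\infty(t)$ in $\ell_1$ for every $t \geq 0$; combined with the stationarity of $\bq_{n_k}$, this delivers the marginal identity $\bq_\infty(t) \stackrel{d}{=} q_\infty$ for all $t \geq 0$.

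To close the argument, I would invoke Proposition \ref{prop: global attractivity}: for each fixed $i$, the coordinate process $\bq_\infty(t, i)$ converges almost surely, and therefore also in distribution, to the constant $q^*(i)$ as $t \to \infty$, so the marginal identity above forces $q_\infty(i) = q^*(i)$ almost surely. Intersecting over the countable family of indices $i$ gives $q_\infty = q^*$ a.s., and hence $q_n \Rightarrow q^*$ in $\ell_1$. I expect the only genuine subtleties to be organisational: confirming that Assumption \ref{ass: fluid limit conditions} covers the Poisson resampling setup, checking that $q_\infty \in Q$ almost surely so that Corollary \ref{cor: unique fluid limit} is applicable to the random initial condition, and invoking the almost sure path continuity of $\bq_\infty$ to justify the continuous mapping step at each fixed $t$. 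A pleasant feature of this marginal route is that it bypasses the need to strengthen the pointwise attractivity of Proposition \ref{prop: global attractivity} to $\ell_1$-norm convergence, which would otherwise demand an additional tail estimate on $\bq_\infty(t)$.
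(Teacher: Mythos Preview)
Your proposal is correct and follows essentially the same interchange-of-limits argument as the paper: tightness from Lemma~\ref{lem: tightness and uniform integrability}, pass to a subsequential limit $q_\infty$, launch stationary processes, identify the fluid limit, use stationarity to transfer the marginal law, and then invoke Proposition~\ref{prop: global attractivity} coordinatewise. The only minor differences are that the paper appeals directly to Theorem~\ref{the: fluid limit} (possibly passing to a further subsequence) and extracts the marginal convergence on a dense time set via \cite[Theorem 23.9]{kallenberg2021foundations}, whereas you streamline both steps by invoking Corollary~\ref{cor: unique fluid limit} and the continuous mapping theorem at every $t$; also note that since $D_{\ell_1}[0,\infty)$ here carries the uniform-on-compacts topology, the evaluation map $x \mapsto x(t)$ is continuous everywhere, so you do not even need the almost sure continuity of $\bq_\infty$ for that step.
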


\begin{proof}
	By Lemma \ref{lem: tightness and uniform integrability}, the sequence of stationary occupancy states $\set{q_n}{n \geq 1}$ is tight in $\ell_1$. It follows from Prohorov's theorem that every increasing sequence of natural numbers has a subsequence $\calK$ such that $\set{q_k}{k \in \calK}$ converges weakly in $\ell_1$ to some random variable~$q$. Therefore, it is enough to prove that $q = q^*$ almost surely for each $\calK$.
	
	Let $\calK \subset \N$ be an increasing sequence such that $q_k \Rightarrow q$ in $\ell_1$ as $k \to \infty$ for some random variable $q$. In addition, let $\bq_k$ be the stationary occupancy process for each $k \in \calK$. It follows from Corollary \ref{cor: unique fluid limit} that $\bq_k \Rightarrow \bq$ in $D_{\ell_1}[0, \infty)$ where $\bq$ solves \eqref{eq: fluid differential equation} almost surely.
	
	By \cite[Theorem 23.9]{kallenberg2021foundations}, there exists $T \subset [0, \infty)$ such that $\bq_k(t) \Rightarrow \bq(t)$ in $\ell_1$ as $k \to \infty$ for all $t \in T$ and $T$ is dense in $[0, \infty)$. Note that $\bq_k(t)$ has the same distribution as $q_k$ for all $t$, thus $\bq(t)$ has the same distribution as $q$ for all $t \in T$. Also, Proposition \ref{prop: global attractivity} yields
	\begin{equation*}
	\lim_{t \to \infty} \bq(t, i) = q^*(i) \quad \text{for all} \quad i \geq 0
	\end{equation*}
	with probability one. Hence, $\bq(t, i) \Rightarrow q^*(i)$ in $\R$ as $t \to \infty$. Since $\bq(t, i)$ has the same distribution as $q(i)$ for all $i \geq 0$ and $t \in T$, this implies that $q(i)$ has the same distribution as the point mass at $q^*(i)$. We conclude that $q(i) = q^*(i)$ almost surely for all $i \geq 1$.
\end{proof}

Suppose that $\lambda_n < n$ and denote the stationary occupancy state by $q_n$. We define
\begin{equation*}
R_n \defeq \frac{n}{\lambda_n}E\left[\norm{q_n}_1 - 1\right] = \frac{n}{\lambda_n}E\left[\sum_{i = 1}^\infty q_n(i)\right].
\end{equation*}
Because $n\left(\norm{q_n}_1 - 1\right)$ is the total number of tasks in the system, Little's law implies that $R_n$ is the mean response time of tasks in steady state. Next we compute the limit of this quantity as the number of servers grows large; we defer the proof to Appendix~\ref{app: auxiliary results}.

\begin{corollary}
	\label{cor: total number of tasks in equilibrium}
	Suppose that the conditions of Theorem \ref{the: interchange of limits} hold. If $q_n$ denotes the stationary occupancy state of the system with $n$ servers, then
	\begin{equation*}
	R \defeq \frac{\norm{q^*}_1 - 1}{\lambda} = \lim_{n \to \infty} R_n.
	\end{equation*}
\end{corollary}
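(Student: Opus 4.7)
The plan is to combine Theorem \ref{the: interchange of limits}, which gives weak convergence of stationary occupancy states $q_n \Rightarrow q^*$ in $\ell_1$, with the uniform integrability of $\{\norm{q_n}_1\}$ established in Lemma \ref{lem: tightness and uniform integrability}, to upgrade distributional convergence into convergence of expectations.

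More concretely, I would first note that the norm $x \mapsto \norm{x}_1$ is continuous on $\ell_1$, so the continuous mapping theorem applied to the convergence $q_n \Rightarrow q^*$ from Theorem \ref{the: interchange of limits} yields $\norm{q_n}_1 \Rightarrow \norm{q^*}_1$ in $\R$. Since $q^*$ is deterministic, this is in fact convergence in probability. Combined with the uniform integrability of the real-valued sequence $\{\norm{q_n}_1\}$ from Lemma \ref{lem: tightness and uniform integrability}, we obtain convergence of first moments, i.e., $E[\norm{q_n}_1] \to \norm{q^*}_1$.

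It then remains to assemble the answer. Using $\lambda_n/n \to \lambda$ and writing
\begin{equation*}
R_n = \frac{n}{\lambda_n} \left( E[\norm{q_n}_1] - 1 \right),
\end{equation*}
the limit on the right is $(\norm{q^*}_1 - 1)/\lambda = R$, provided one checks that $\norm{q^*}_1 < \infty$; this is immediate from the bound $q^*(i) \leq \lambda^i$ derived inside the proof of Proposition \ref{prop: fixed point}, which gives $\norm{q^*}_1 \leq 1/(1 - \lambda)$.

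There is essentially no obstacle beyond these routine steps: the two nontrivial ingredients (weak convergence of the stationary occupancy states and uniform integrability of their norms) are already available from Theorem \ref{the: interchange of limits} and Lemma \ref{lem: tightness and uniform integrability} respectively, so the only thing to verify carefully is that the uniform integrability statement — which concerns the real random variables $\norm{q_n}_1$ — is exactly what one needs to pass from weak convergence in $\ell_1$ to convergence of the expectations $E[\norm{q_n}_1]$, and this is a standard consequence of Skorokhod representation (or the Vitali convergence theorem) applied to $\norm{q_n}_1$.
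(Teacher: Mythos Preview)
Your proposal is correct and follows essentially the same approach as the paper: deduce $\norm{q_n}_1 \Rightarrow \norm{q^*}_1$ from $q_n \Rightarrow q^*$ in $\ell_1$ (the paper does this via the reverse triangle inequality $|\norm{q_n}_1 - \norm{q^*}_1| \leq \norm{q_n - q^*}_1$, you via the continuous mapping theorem, which is the same content), then invoke the uniform integrability of $\{\norm{q_n}_1\}$ from Lemma~\ref{lem: tightness and uniform integrability} to upgrade to convergence in mean, and finally use $\lambda_n/n \to \lambda$.
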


\subsection{Phase transition for $q^*$}
\label{sub: isolated servers and performance}

By Theorem \ref{the: interchange of limits}, the stationary occupancy state approaches the equilibrium point $q^*$ as the size of the system grows large. In addition, recall that $q^*$ is determined by the limiting outdegree distribution through the probability generating function $\varphi$. Thus, we may reach some conclusions about the impact of the outdegree distribution on the performance of a large load balancing system by studying how different properties of the limiting outdegree distribution affect $q^*$. For example, the next result establishes that there exists a phase transition as the limiting outdegree distribution ceases to have mass at zero. As a result, performance is particularly negative when the outdegree distribution has mass at zero, no matter how small; the result holds also when condition \eqref{ass: sparsity condition} does no hold.

\begin{proposition}
	\label{prop: isolated servers}
	Suppose that $m \defeq \min\set{d \geq 0}{p(d) > 0} < \infty$. For all $i \geq 2$,
	\begin{center}
		\begin{tabular}{l c r}
			$\lambda\left[\lambda p(0)\right]^{i - 1} \leq q^*(i) \leq \lambda\left[\lambda \left(1 - p(\infty)\right)\right]^{i - 1}$ & if & $m = 0,$ \\
			$\lambda^{(m + 1)^{i - 1}}\left[\lambda p(m)\right]^{\frac{(m + 1)^{i - 1} - 1}{m}} \leq q^*(i) \leq \lambda^{(m + 1)^{i - 1}}\left[\lambda \left(1 - p(\infty)\right)\right]^{\frac{(m + 1)^{i - 1} - 1}{m}}$ & if & $m > 0.$
		\end{tabular}
	\end{center}
	In particular, $q^*$ is bounded between two geometric sequences if $m = 0$ and $q^*$ is bounded between two sequences that decay doubly exponentially if $m > 0$.
\end{proposition}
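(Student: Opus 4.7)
The plan is to reduce everything to the explicit recursion from Proposition~\ref{prop: fixed point}, namely $q^*(0) = 1$, $q^*(1) = \lambda$ and
\[
q^*(i) = \lambda\, q^*(i-1)\,\varphi\!\left(q^*(i-1)\right) \quad \text{for all} \quad i \geq 2,
\]
and then to bound $\varphi$ above and below by simple functions of $x$ depending on the value of $m$. The definition of $m$ gives $p(d) = 0$ for $d < m$ and $p(m) > 0$, so
\[
\varphi(x) \;=\; \sum_{d = m}^{\infty} x^d p(d) \;=\; x^m \sum_{d = m}^{\infty} x^{d-m} p(d) \quad \text{for all} \quad x \in [0,1].
\]
Since $0 \leq q^*(i-1) \leq 1$ (because $q^* \in Q$ by Proposition~\ref{prop: fixed point}), the summation above lies between $p(m)$ and $\sum_{d \geq m} p(d) = 1 - p(\infty)$, yielding the two-sided bound
\[
x^m\, p(m) \;\leq\; \varphi(x) \;\leq\; x^m\bigl(1 - p(\infty)\bigr) \quad \text{for all} \quad x \in [0,1].
\]

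In the case $m = 0$ these bounds reduce to $p(0) \leq \varphi(x) \leq 1 - p(\infty)$, so the recursion gives $\lambda p(0)\, q^*(i-1) \leq q^*(i) \leq \lambda(1 - p(\infty))\, q^*(i-1)$, and a straightforward induction on $i$ starting from $q^*(1) = \lambda$ produces the claimed geometric bounds. In the case $m > 0$, substituting the bounds for $\varphi$ into the recursion gives
\[
\lambda\, p(m)\, \bigl[q^*(i-1)\bigr]^{m+1} \;\leq\; q^*(i) \;\leq\; \lambda\bigl(1 - p(\infty)\bigr) \bigl[q^*(i-1)\bigr]^{m+1}.
\]
I would then verify by induction on $i$ that the claimed lower bound $L_i \defeq \lambda^{(m+1)^{i-1}}[\lambda p(m)]^{((m+1)^{i-1}-1)/m}$ satisfies the same recurrence $L_i = \lambda p(m) L_{i-1}^{m+1}$ with $L_1 = \lambda$, and analogously for the upper bound $U_i$ with $p(m)$ replaced by $1 - p(\infty)$. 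The base case $i = 1$ is immediate since the exponent of $\lambda p(m)$ vanishes, and the inductive step is a routine exponent calculation: applying the recurrence for $L_i$ and the inductive hypothesis for $q^*(i-1)$ gives a factor $\lambda p(m) \cdot [\lambda p(m)]^{(m+1)((m+1)^{i-2}-1)/m}$, whose exponents simplify to $((m+1)^{i-1} - 1)/m$ thanks to the identity $1 + (m+1)((m+1)^{i-2}-1)/m = ((m+1)^{i-1}-1)/m$.

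The only real technicality is this exponent bookkeeping, and it is not really hard; the proof is essentially a one-line bound on $\varphi$ followed by an induction that mirrors exactly the structure of the recursion for $q^*$. The phase transition is then visible in the exponent: for $m = 0$ the bounds are geometric in $i$, whereas for $m \geq 1$ the $(m+1)$-st power in the recursion forces the $(m+1)^{i-1}$ growth in the exponent and hence doubly exponential decay.
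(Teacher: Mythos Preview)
Your proposal is correct and follows essentially the same approach as the paper: both bound $\varphi(x)$ by $p(m)x^m \leq \varphi(x) \leq (1-p(\infty))x^m$, substitute into the recursion $q^*(i) = \lambda q^*(i-1)\varphi(q^*(i-1))$, and then induct. The only cosmetic difference is that the paper writes the induction result uniformly as $[\lambda p(m)]^{\sum_{j=0}^{i-2}(m+1)^j}\lambda^{(m+1)^{i-1}}$ and then evaluates the geometric sum, whereas you split into the cases $m=0$ and $m>0$ from the start.
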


\begin{proof}
	Note that $p(m)x^m \leq \varphi(x) \leq \left[1 - p(\infty)\right]x^m$ for all $x \in [0, 1]$. Hence,
	\begin{equation*}
	\lambda p(m) \left[q^*(i - 1)\right]^{m + 1} \leq q^*(i) \leq \lambda \left[1 - p(\infty)\right] \left[q^*(i - 1)\right]^{m + 1} \quad \text{for all} \quad i \geq 2.
	\end{equation*}
	It follows by induction that
	\begin{equation*}
	\left[\lambda p(m)\right]^{\sum_{j = 0}^{i - 2} (m + 1)^j} \lambda^{(m + 1)^{i - 1}} \leq q^*(i) \leq \left[\lambda \left(1 - p(\infty)\right)\right]^{\sum_{j = 0}^{i - 2} (m + 1)^j} \lambda^{(m + 1)^{i - 1}} \quad \text{for all} \quad i \geq 2.
	\end{equation*}
	The claim is a straightforward consequence of these two inequalities.
\end{proof}

\begin{figure}
	\centering
	\includegraphics{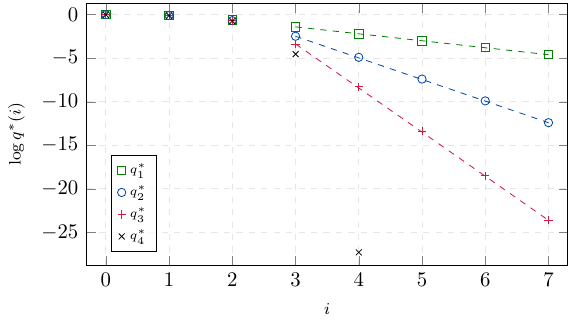}
	\caption{Equilibrium point for $\lambda = 0.9$ and distinct limiting outdegree distributions with mean $d = 5$: for $q_1^*$ the limiting outdegree distribution has mass only at $0$ and $2d$, for $q_2^*$ a uniform distribution on outdegrees between $0$ and $2d$ was used, a Poisson distribution was used for $q_3^*$ and a deterministic distribution for $q_4^*$. The tail of $\log q^*(i)$ decays almost linearly for the limiting outdegree distributions with mass at zero.}
	\label{fig: fixed point for different degree distributions}
\end{figure}

The situation where $m = 0$ corresponds to a random graph law such that the average fraction of servers that cannot forward arriving tasks to other servers is positive. In this case $q^*$ is bounded between two geometric sequences, regardless of how small the average fraction of isolated servers is. However, $q^*$ is bounded between two sequences that decay doubly exponentially if the mean fraction of isolated servers is zero; i.e., $m > 0$. Figure \ref{fig: fixed point for different degree distributions}, shows how $q^*$ decays nearly geometrically for several limiting outdegree distributions with mass at zero. In addition, Table \ref{tab: W and P} illustrates the stark contrast between $m = 0$ and $m > 0$. Differences in the mean delay are fairly minor, but $q^*$ decays much slower if $m = 0$.

The geometric lower and upper bounds may be intuitively understood as follows. First, observe that tasks depart from servers with at least $i$ tasks at rate $q^*(i)$, and tasks are dispatched to a server with at least $i - 1$ tasks at a rate that is larger than or equal to $\lambda p(0) q^*(i - 1)$. In steady state, the departure rate from servers with at least $i$ tasks equals the rate at which tasks are dispatched to servers with at least $i - 1$ tasks. Thus,
\begin{align*}
q^*(i) \geq \lambda p(0) q^*(i - 1) \geq \left[\lambda p(0)\right]^{i - 1} q^*(1) = \lambda \left[\lambda p(0)\right]^{i - 1}.
\end{align*}
The geometric upper bound may be explained using a similar heuristic argument, noting that the rate at which tasks are dispatched to servers with at least $i - 1$ tasks is at most $\lambda [1 - p(\infty)]q^*(i - 1)$.  Observe here that the probability that an arriving task is diverted away from a busy server is at least $p(\infty)$ since $m = 0$ implies that in the limiting regime there are idle servers in the system with probability one.

The geometric decay of the equilibrium occupancy state holds even when the average fraction of isolated servers is arbitrarily small but positive. In other words, to achieve favorable performance, it does not matter so much to have a large average outdegree, but rather to avoid situations where some nodes have outdegree zero. For example, consider a topology with $p(2) = 1$ consisting entirely of isolated couples of servers that can forward tasks from one to the other. The decay is substantially faster in this case than in a topology where 1\% of the servers cannot forward tasks to other servers and the other 99\% of the servers are fully connected; i.e., $p(0) = 0.01$ and $p(\infty) = 0.99$.

\newcolumntype{C}[1]{>{\centering\arraybackslash}m{#1}}
\begin{table}
	\centering
	{\footnotesize
		\begin{tabular}{|C{7mm}|C{14mm}|C{14mm}|C{14mm}|C{14mm}|C{14mm}|C{14mm}|C{14mm}|}
			\hline
			$k$ & $R$ & $q^*(1)$ & $q^*(2)$ & $q^*(3)$ & $q^*(4)$ & $q^*(5)$ & $q^*(6)$ \\
			\hline
			$0$ & $1.7778$ & $0.9000$ & $0.5905$ & $0.1094$ & $0.0001$ & $0.0000$ & $0.0000$ \\
			\hline
			$1$ & $1.7941$ & $0.9000$ & $0.5927$ & $0.1214$ & $0.0006$ & $0.0000$ & $0.0000$ \\
			\hline
			$2$ & $1.8528$ & $0.9000$ & $0.5993$ & $0.1603$ & $0.0079$ & $0.0000$ & $0.0000$ \\
			\hline
			$3$ & $2.0513$ & $0.9000$ & $0.6103$ & $0.2342$ & $0.0712$ & $0.0214$ & $0.0064$ \\
			\hline
		\end{tabular}
	}
	\caption{$R$ and $q^*(i)$ for $\lambda = 0.9$ and limiting outdegree distributions that are uniform in $\{3 - k, 3, 3 + k\}$.}
	\label{tab: W and P}
\end{table}

\subsection{Lower bound for $q^*$}
\label{sub: optimal performance}

Corollary \ref{cor: total number of tasks in equilibrium} gives the limit of the steady-state mean response time of tasks as $n \to \infty$. The value $R$ of this limit is minimal if and only if $q^*(i) = 0$ for all $i > 1$, or equivalently $p(\infty) = 1$. Note that this corresponds to a somewhat dense limiting regime where the mean outdegree approaches infinity as $n \to \infty$. Indeed, $p(\infty) = 1$ implies that for each $k$ there exists $m$ such that $p_n(d) < 1 / 2k$ for all $d < k$ and $n \geq m$. This implies that
\begin{equation*}
E\left[D_n\right] = \sum_{d = 0}^{n - 1} d p_n(d) \geq \sum_{d = k}^{n - 1} d p_n(d) \geq \frac{k}{2} \quad \text{for all} \quad n \geq m,
\end{equation*}
and since $k$ is arbitrary, we conclude that $E[D_n] \to \infty$ as $n \to \infty$ when $p(\infty) = 1$

While the steady-state mean response time is theoretically optimal in this dense regime, in practice the communication overhead increases with the average outdegree; because this quantity determines how many neighbors a server needs to poll on average before it can forward a task. Therefore, from a practical perspective it is more relevant to consider the situation where $E[D_n]$ is bounded. Below we derive the limiting outdegree distribution $p$ that minimizes the steady-state mean response time $R$ when
\begin{equation}
\label{ass: bounded mean degree}
p(\infty) = 0 \quad \text{and} \quad \sum_{i = 0}^\infty ip(i) \leq d
\end{equation}
for some given $d \geq 0$; i.e., the mean of the limiting outdegree distribution is at most $d$.

\begin{lemma}
	\label{lem: optimal fixed point}
	Fix $c \in (0, 1)$ and suppose that \eqref{ass: bounded mean degree} holds. Then
	\begin{equation*}
	\varphi(c) \geq \left(\floor{d} + 1 - d\right) c^{\floor{d}} + \left(d - \floor{d}\right) c^{\floor{d} + 1} \geq c^d.
	\end{equation*}
	Furthermore, $\varphi(c) = c^d$ if and only if $d \in \N$ and $p(d) = 1$.
\end{lemma}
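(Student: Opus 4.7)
The plan is to recognise $\varphi(c) = E\bigl[c^D\bigr]$, where $D$ is a random variable with distribution $p$, and to exploit the strict convexity of $x \mapsto c^x$ on $\R$ (which holds because $c \in (0,1)$). Note that since $p(\infty) = 0$, $D$ takes values in $\N$ with $E[D] = \sum_i i p(i) \leq d < \infty$.

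Set $k \defeq \floor{d}$ and introduce the affine function
\begin{equation*}
L(x) \defeq (k + 1 - x)\,c^k + (x - k)\,c^{k + 1},
\end{equation*}
namely the secant through $(k, c^k)$ and $(k+1, c^{k+1})$. By strict convexity of $x \mapsto c^x$, the graph of $L$ lies on or above the curve on $[k, k+1]$ and strictly below it on $(-\infty, k) \cup (k+1, \infty)$. In particular, $L(i) \leq c^i$ for every $i \in \N$, with equality exactly for $i \in \{k, k+1\}$. Taking expectations yields $E[L(D)] \leq E[c^D] = \varphi(c)$, while the negativity of the slope $c^{k+1} - c^k$ together with $E[D] \leq d$ gives
\begin{equation*}
E[L(D)] = L\bigl(E[D]\bigr) \geq L(d) = (\floor{d} + 1 - d)\,c^{\floor{d}} + (d - \floor{d})\,c^{\floor{d} + 1}.
\end{equation*}
Chaining these two bounds proves the first inequality, and the second inequality $L(d) \geq c^d$ is the standard secant bound for the convex function $x \mapsto c^x$ on $[k, k+1]$.

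For the equality case $\varphi(c) = c^d$ all three displayed inequalities must be tight. Tightness of $L(d) \geq c^d$ forces $d \in \{k, k+1\}$ by strict convexity, i.e., $d \in \N$. Tightness of $E[L(D)] \leq \varphi(c)$ forces $L(D) = c^D$ almost surely, so $D \in \{k, k+1\}$ almost surely. Tightness of $L(E[D]) \geq L(d)$ forces $E[D] = d$ because $L$ is strictly decreasing. With $d$ integer one then has $k = d$, whence $D = d$ almost surely, i.e., $p(d) = 1$; the converse is immediate.

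There is no genuinely hard step: the argument is Jensen's inequality with the twist that the interpolant of $x \mapsto c^x$ used is the secant through the two integers bracketing $d$ rather than a tangent line, which is precisely what allows the support constraint $D \in \N$ to sharpen the Jensen bound $\varphi(c) \geq c^{E[D]}$ into the piecewise-linear expression in the statement. The only bit requiring care is disentangling the three equality conditions to pin down both $d \in \N$ and $p(d) = 1$.
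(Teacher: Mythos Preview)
Your proof is correct and follows essentially the same strategy as the paper: bound $c^i$ from below by a linear interpolant, apply the mean constraint $E[D]\le d$ via monotonicity, and then invoke the secant bound $L(d)\ge c^d$. The only notable difference is that the paper works with the full piecewise-linear interpolant $f(x)=(\floor{x}+1-x)c^{\floor{x}}+(x-\floor{x})c^{\floor{x}+1}$ through all integer nodes and applies Jensen's inequality to the convex function $f$, whereas you use only the single affine secant $L$ through $(\floor{d},c^{\floor{d}})$ and $(\floor{d}+1,c^{\floor{d}+1})$; this spares you from checking that $f$ is convex and lets the expectation pass through $L$ trivially. For the equality case the paper argues directly that any nondegenerate $p$ gives a strict inequality via strict convexity of $x\mapsto c^x$, while you trace tightness back through each of the three inequalities; both arguments are equally short.
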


\begin{proof}
	Consider the function $\map{f}{\R}{\R}$ such that, for all $k \in \Z$, we have $f(k) = c^k$ and the restriction of $f$ to $[k, k + 1]$ is linear. Specifically,
	\begin{equation*}
	f(x) \defeq \left(\floor{x} + 1 - x\right) c^{\floor{x}} + \left(x - \floor{x}\right) c^{\floor{x} + 1} \quad \text{for all} \quad x \in \R.
	\end{equation*}
	The convexity of $x \mapsto c^x$ implies that $f(x) \geq c^x$ for all $x \in \R$. Moreover,
	\begin{equation*}
	\varphi(c) = \sum_{i = 0}^\infty p(i)c^i = \sum_{i = 0}^\infty p(i)f(i) \geq f\left(\sum_{i = 0}^\infty p(i)i\right) \geq f(d) \geq c^d,
	\end{equation*}
	since $f$ is convex and decreasing.
	
	Suppose that $p(j) < 1$ for some $j$. The strict convexity of $x \mapsto c^x$ implies that
	\begin{equation*}
	\varphi(c) = \sum_{i \neq j} p(i)c^i + p(j)c^j \geq \left[1 - p(j)\right]c^{\frac{1}{1 - p(j)}\sum_{i \neq j} p(i)i} + p(j)c^j > c^{\sum_{i = 0}^\infty p(i)i} \geq c^d.
	\end{equation*}
	Therefore, it is necessary that $p$ is a deterministic probability measure in order to achieve the lower bound $c^d$, and it is straightforward to check that for a deterministic $p$ the lower bound is only attained if $d \in \N$ and $p(d) = 1$.
\end{proof}

\begin{proposition}
	\label{prop: optimal fixed point}
	Suppose that \eqref{ass: bounded mean degree} holds. Then
	\begin{equation*}
	q^*(i) \geq \begin{cases}
	\lambda^i & \text{if} \quad d = 0, \\
	\lambda^{\frac{(d + 1)^i - 1}{d}} & \text{if} \quad d > 0,
	\end{cases}
	\quad \text{for all} \quad i \geq 1.
	\end{equation*}
	If $d \in \N$ and $p(d) = 1$, then we have equality for all $i \geq 1$, and if the latter conditions do not hold, then the inequality is strict for all $i \geq 2$. 
\end{proposition}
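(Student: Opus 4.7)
The plan is to combine the recursion $q^*(i) = \lambda q^*(i-1)\varphi(q^*(i-1))$ from Proposition~\ref{prop: fixed point} with the pointwise bound $\varphi(c) \geq c^d$ supplied by Lemma~\ref{lem: optimal fixed point}, and then to induct on $i$. Define $a_i \defeq i$ in the case $d = 0$ and $a_i \defeq \bigl[(d+1)^i - 1\bigr]/d$ in the case $d > 0$, so that in both cases $a_1 = 1$ and $a_i = 1 + (d+1)a_{i-1}$. The lower bound to prove is $q^*(i) \geq \lambda^{a_i}$.

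The base case $i = 1$ holds with equality because $q^*(1) = \lambda = \lambda^{a_1}$. For the inductive step, I would apply Lemma~\ref{lem: optimal fixed point} at the point $c = q^*(i-1) \in (0,1)$ (note $q^*(i-1) \leq \lambda < 1$ because $q^*$ is nonincreasing and $q^*(1) = \lambda$) to obtain $\varphi\bigl(q^*(i-1)\bigr) \geq q^*(i-1)^d$. Plugging this into the recursion and using the inductive hypothesis gives
\begin{equation*}
q^*(i) = \lambda q^*(i-1)\varphi\bigl(q^*(i-1)\bigr) \geq \lambda \, q^*(i-1)^{d+1} \geq \lambda \cdot \lambda^{(d+1)a_{i-1}} = \lambda^{a_i},
\end{equation*}
which closes the induction. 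This covers the claimed inequality in both branches of the statement.

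For the equality clause, if $d \in \N$ and $p(d) = 1$, then $\varphi(c) = c^d$ identically, so every inequality above becomes an equality and $q^*(i) = \lambda^{a_i}$ for all $i \geq 1$. Conversely, suppose the condition $d \in \N$, $p(d) = 1$ fails; note that when $d = 0$ the constraint $\sum_i i p(i) \leq 0$ forces $p(0) = 1$, placing us automatically in the equality case, so we may assume $d > 0$. The equality clause of Lemma~\ref{lem: optimal fixed point} then yields $\varphi(\lambda) > \lambda^d$ strictly, and since $q^*(1) = \lambda$ we get $q^*(2) > \lambda \cdot \lambda \cdot \lambda^d = \lambda^{a_2}$. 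This strict inequality propagates to all $i \geq 2$: in the inductive step the bound $\varphi(q^*(i-1)) \geq q^*(i-1)^d$ combined with the strict hypothesis $q^*(i-1) > \lambda^{a_{i-1}}$ yields $q^*(i) \geq \lambda q^*(i-1)^{d+1} > \lambda^{a_i}$ by monotonicity of $x \mapsto x^{d+1}$.

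There is no real obstacle here; the argument is a straightforward induction once the key lemma is in hand. The only point requiring minor care is verifying that $q^*(i-1) \in (0,1)$ so that Lemma~\ref{lem: optimal fixed point} applies, and bookkeeping the degenerate case $d = 0$ to confirm it falls under the equality clause rather than the strict one.
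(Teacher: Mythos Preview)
Your proof is correct and follows essentially the same approach as the paper: apply Lemma~\ref{lem: optimal fixed point} to the recursion of Proposition~\ref{prop: fixed point} and induct, noting that the geometric sum $\sum_{j=0}^{i-1}(d+1)^j$ equals your $a_i$. You are slightly more explicit than the paper about why the strict inequality propagates and about why the $d=0$ case automatically falls under the equality clause, but the argument is the same.
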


\begin{proof}
	By Proposition \ref{prop: fixed point} and Lemma \ref{lem: optimal fixed point},
	\begin{equation*}
	q^*(i) = \lambda q^*(i - 1)\varphi\left(q^*(i - 1)\right) \geq \lambda \left[q^*(i - 1)\right]^{d + 1} \quad \text{for all} \quad i \geq 2.
	\end{equation*}
	Since $q^*(1) = \lambda$, it follows by induction that
	\begin{equation*}
	q^*(i) \geq \lambda^{\sum_{j = 0}^{i - 1} (d + 1)^j} = \begin{cases}
	\lambda^i & \text{if} \quad d = 0, \\
	\lambda^{\frac{(d + 1)^i - 1}{d}} & \text{if} \quad d > 0,
	\end{cases}
	\quad \text{for all} \quad i \geq 1.
	\end{equation*}
	By Lemma \ref{lem: optimal fixed point}, the above inequality is strict for all $i \geq 2$ unless $d \in \N$ and $p(d) = 1$, which results in equality for all $i \geq 1$.
\end{proof}

Under the sparsity constraint \eqref{ass: bounded mean degree}, the equilibrium $q^*$ is minimized coordinatewise if and only if $d \in \N$ and the limiting outdegree distribution is deterministic with $p(d) = 1$. In particular, the minimum value of $R$ is only attained for this limiting outdegree distribution and the fraction of servers with at least $i$ tasks is minimal for each $i$. Also, the numerical results in Figure~\ref{fig: fixed point for different degree distributions} and Table~\ref{tab: W and P} suggest that $q^*$ decreases coordinatewise as the limiting outdegree distribution becomes more concentrated around $d$.

\begin{remark}
	The lower bound in Proposition~\ref{prop: optimal fixed point} is only tight when $d \in \N$, but it is possible to derive a lower bound that is tight also when $d \notin \N$.  This lower bound is obtained in a similar fashion but invoking the inequality $\varphi(c) \geq \left(\floor{d} + 1 - d\right)c^{\floor{d}} + \left(d - \floor{d}\right)c^{\floor{d} + 1}$ instead of the inequality $\varphi(c) \geq c^d$, which is only tight for $d \in \N$.  However, this more refined lower bound is rather unwieldy.
\end{remark}

\section{Proof of the fluid limit}
\label{sec: proof of the fluid limit}

In this section we prove Theorem \ref{the: fluid limit}. As a first step, we define the processes $\set{\bq_n}{n \geq 1}$ and $\set{\bX_n}{n \geq 1}$ as deterministic functions of the following stochastic primitives.

\begin{enumerate}
	\item[(a)] \emph{Driving Poisson processes:} independent Poisson processes $\calN^a$ and $\set{\calN_i^d}{i \geq 1}$ of unit intensity, for counting the arrivals and departures of tasks, respectively.
	
	\item[(b)] \emph{Selection variables:} independent random variables $\set{u_n^m, U_{i, n}^m}{i, m, n \geq 1}$ such that $u_n^m$ is uniform in $V_n$ and $U_{i, n}^m$ is uniform in $[0, 1)$ for all $m$ and $n$.
	
	\item[(c)] \emph{Initial conditions:} a sequence $\set{X_n}{n \geq 1}$ of random vectors describing the initial number of tasks at each server and such that the corresponding sequence of occupancy states $\set{q_n}{n \geq 1}$ is tight in $\ell_1$.
	
	\item[(d)] \emph{Random graphs:} independent random graphs $\set{G_n^m}{m \geq 0, n \geq 1}$ such that for each fixed $n$ all the graphs $\set{G_n^m}{m \geq 0}$ have node set $V_n$ and a common distribution that satisfies Assumption \ref{ass: fluid limit conditions} and is invariant under permutations of the nodes.
	
	\item[(e)] \emph{Resampling processes:} c\`adl\`ag processes $\set{\calR_n}{n \geq 1}$ satisfying Assumption \ref{ass: fluid limit conditions}.
\end{enumerate}
The sample paths of $\bq_n$ and $\bX_n$ are constructed on the completion of the product of the probability spaces where the stochastic primitives are defined. This construction is such that certain stochastic equations hold, as we explain in the following section.

\subsection{Stochastic equations}
\label{sub: stochastic equations}

For each fixed $n$, the times at which the graph is sampled are $\sigma_n^0 \defeq 0$ and the jump times $\set{\sigma_n^m}{m \geq 1}$ of the resampling process $\calR_n$. Specifically,
\begin{equation*}
\bG_n(t) = G_n^0 \quad \text{if} \quad \sigma_n^0 \leq t \leq \sigma_n^1 \quad \text{and} \quad \bG_n(t) = G_n^m \quad \text{if} \quad \sigma_n^m < t \leq \sigma_n^{m + 1}.
\end{equation*}
In addition, tasks arrive at the jump times $\set{\tau_n^m}{m \geq 1}$ of the arrival process $\calN_n^a$ defined by $\calN_n^a(t) \defeq \calN^a(\lambda_n t)$. At time $\tau_n^m$, a task appears at server $u_n^m$ and we let
\begin{equation*}
I_n^m(X, i) \defeq \ind{\min\set{X(v)}{v = u_n^m\ \text{or}\ \left(u_n^m, v\right) \in \bE_n\left(\tau_n^{m -}\right)} \geq i} \quad \text{for all} \quad X \in \N^n.
\end{equation*}
If $X(v)$ represents the number of tasks at server $v$ right before $\tau_n^m$, then $I_n^m(X, i) = 1$ if and only if the task arriving at time $\tau_n^m$ is dispatched to a server with at least $i$ tasks.

The processes $\bq_n$ and $\bX_n$ are constructed in Appendix \ref{app: construction of sample paths} as deterministic functions of the stochastic primitives within a set of probability one. Both are piecewise constant c\`adl\`ag  processes defined on $[0, \infty)$ and have jumps of size $1 / n$ and jumps of unit size, respectively. Moreover, the following stochastic equations hold:
\begin{equation}
\label{eq: stochastic equations}
\begin{split}
\bq_n(t, i) &= \bq_n(0, i) + \frac{1}{n} \sum_{m = 1}^{\calN_n^a(t)} \left[I_n^m\left(\bX_n\left(\tau_n^{m-}\right), i - 1\right) - I_n^m\left(\bX_n\left(\tau_n^{m-}\right), i\right)\right] \\
&- \frac{1}{n}\calN_i^d\left(n\int_0^t\left[\bq_n(s, i) - \bq_n(s, i + 1)\right]ds\right)
\end{split}
\end{equation}
for all $i \geq 1$ and $t \geq 0$ with probability one. Indeed, the first term on the right is the initial occupancy state, the second term counts the arrivals to servers with exactly $i - 1$ tasks and the third term counts the departures from servers with exactly $i$ tasks.

\subsection{Decomposition of the equations}
\label{sub: decomposition of the equations}

Next we decompose the right-hand side of \eqref{eq: stochastic equations} in a way that simplifies its analysis. For this purpose we need to introduce some notation. Consider the function defined by
\begin{equation}
\label{eq: uniformly random subsets}
\alpha_n(d, x) \defeq \begin{cases}
\prod_{m = 0}^{d - 1} \left(\frac{n x - m}{n - m}\right)^+ & \text{if} \quad d \leq n, \\
0 & \text{if} \quad d > n,
\end{cases} \quad \text{for all} \quad x \in [0, 1].
\end{equation}
If $n x \in \N$ and a subset of $\{1, \dots, n\}$ consisting of $d \leq n$ elements is drawn uniformly at random, then $\alpha_n(d, x)$ is the probability that this subset is contained in $\{1, \dots, n x\}$.

Suppose that the fraction of servers with at least $i$ tasks is $x$ when a task arrives and there is no information about the current graph. The server $u$ that initially receives the task is uniformly random and thus has outdegree $d$ with probability $p_n(d)$. Also, given that the outdegree of $u$ is $d$, the probability that all the servers in the neighborhood of $u$ have at least $i$ tasks is $\alpha_n(d + 1, x)$ since the law of the graph is invariant under permutations of the nodes. Thus, the probability that the task is sent to a server with at least $i$ tasks is
\begin{equation}
\label{eq: dispatching probabilities}
\beta_n(x) \defeq \sum_{d = 0}^{n - 1} \alpha_n\left(d + 1, x\right) p_n(d) \quad \text{for all} \quad x \in [0, 1].
\end{equation}

In particular, we have
\begin{equation}
\label{eq: expectation of assigning indicators}
E\left[I_n^m(X, i)\right] = \beta_n\left(q(i)\right) \quad \text{for all} \quad X \in \N^n \quad \text{and} \quad q(i) \defeq \frac{1}{n}\sum_{u = 1}^n \ind{X(u) \geq i}.
\end{equation}
The expectation is taken with respect to the stochastic primitives, or more precisely just with respect to the graph right before $\tau_n^m$ and the server $u_n^m$ at which the task originally appears; indeed, note that $I_n^m(X, i)$ only depends on these two random variables.

\begin{remark}
	\label{rem: arrivals with a given graph}
	The above arguments break down if the graph at the time of the arrival is given. In that case the probability that a task is dispatched to a server with at least $i$ tasks depends on the given graph and the number of tasks at each individual server.
\end{remark}

Consider the processes defined by
\begin{equation*}
\begin{split}
\bar{\bq}_n(t) \defeq \sum_{m = 0}^\infty \bq_n\left(\sigma_n^m\right)\ind{\sigma_n^m \leq t < \sigma_n^{m + 1}} \quad \text{and} \quad \bar{\bX}_n(t) \defeq \sum_{m = 0}^\infty \bX_n\left(\sigma_n^m\right)\ind{\sigma_n^m \leq t < \sigma_n^{m + 1}},
\end{split}
\end{equation*}
which correspond to sampling the state of the system at the resampling times. Also, let
\begin{equation*}
q_n^m \defeq \bq_n\left(\tau_n^{m-}\right), \quad \bar{q}_n^m \defeq \bar{\bq}_n\left(\tau_n^{m-}\right), \quad X_n^m \defeq \bX_n\left(\tau_n^{m-}\right) \quad \text{and} \quad \bar{X}_n^m \defeq \bar{\bX}_n\left(\tau_n^{m-}\right).
\end{equation*}
We define processes $\bL_n$, $\bM_n$ and $\bu_n$ as follows. If at most one task arrives between any two successive resampling times, then we say that $\calR_n$ \emph{separates arrivals fully} and we let
\begin{equation}
\label{eq: decomposition for graph resampled between every two arrivals}
\begin{split}
&\bL_n(t, i) \defeq 0, \\
&\bM_n(t, i) \defeq \frac{1}{n} \sum_{m = 1}^{\calN_n^a(t)} \left[I_n^m\left(X_n^m, i\right) - \beta_n\left(q_n^m(i)\right)\right], \\
&\bu_n(t, i) \defeq \frac{1}{n}\sum_{m = 1}^{\calN_n^a(t)} \beta_n\left(q_n^m(i)\right),
\end{split}
\end{equation}
for all $i \geq 0$ and $t \geq 0$. If $\calR_n$ does not separate arrivals fully, then we define
\begin{equation}
\label{eq: general decomposition}
\begin{split}
&\bL_n(t, i) \defeq \frac{1}{n} \sum_{m = 1}^{\calN_n^a(t)} \left[I_n^m\left(X_n^m, i\right) - I_n^m\left(\bar{X}_n^m, i\right)\right], \\
&\bM_n(t, i) \defeq \frac{1}{n} \sum_{m = 1}^{\calN_n^a(t)} \left[I_n^m\left(\bar{X}_n^m, i\right) - \beta_n\left(\bar{q}_n^m(i)\right)\right], \\
&\bu_n(t, i) \defeq \frac{1}{n}\sum_{m = 1}^{\calN_n^a(t)} \beta_n\left(\bar{q}_n^m(i)\right).
\end{split}
\end{equation}
Note that $X_n^m$ and $q_n^m$ have been replaced by $\bar{X}_n^m$ and $\bar{q}_n^m$ in the definitions of $\bM_n$ and $\bu_n$ provided in \eqref{eq: general decomposition}. Also, the sum of the three processes is the same under \eqref{eq: decomposition for graph resampled between every two arrivals} and \eqref{eq: general decomposition}.

\begin{remark}
	\label{rem: decomposition for resampling with every arrival}
	If the resampling process separates arrivals fully, then the graph is resampled between any two consecutive arrival times. The definitions provided in \eqref{eq: decomposition for graph resampled between every two arrivals} significantly simplify the proof of the fluid limit when all the resampling processes $\calR_n$ separate arrivals fully. But this simplification is no longer possible when successive arrivals have a positive probability of being dispatched using the same graph. In this case we must resort to \eqref{eq: general decomposition}.
\end{remark}

The stochastic equations \eqref{eq: stochastic equations} can now be expressed as follows:
\begin{equation}
\label{eq: decomposition of stochastic equations}
\bq_n = \bq_n(0) + \bv_n + \bw_n,
\end{equation}
where for all $i \geq 1$ and $t \geq 0$,  the vanishing process $\bv_n$ is defined by
\begin{equation}
\label{eq: vanishing process}
\begin{split}
\bv_n(t, i) &\defeq \bL_n(t, i - 1) - \bL_n(t, i) + \bM_n(t, i - 1) - \bM_n(t, i) \\
&+ \int_0^t \left[\bq_n(s, i) - \bq_n(s, i + 1)\right]ds - \frac{1}{n}\calN_i^d\left(n\int_0^t\left[\bq_n(s, i) - \bq_n(s, i + 1)\right]ds\right),
\end{split}
\end{equation}
and the drift process $\bw_n$ is defined by
\begin{equation}
\label{eq: drift process}
\begin{split}
\bw_n(t, i) \defeq \bu_n(t, i - 1) - \bu_n(t, i) - \int_0^t \left[\bq_n(s, i) - \bq_n(s, i + 1)\right]ds.
\end{split}
\end{equation}

The road map for proving Theorem \ref{the: fluid limit} is as follows. In Section \ref{sub: events separation property} we formally define the pseudo-separation property mentioned in Assumption \ref{ass: fluid limit conditions} and we prove Proposition \ref{prop: admissible resampling processes}. In Section \ref{sub: vanishing processes} we show that $\bv_n \Rightarrow 0$ as $n \to \infty$ with respect to a suitable topology. Informally, this implies that the asymptotic behavior of $\bq_n$ is essentially captured by \eqref{eq: drift process} in the limit as $n \to \infty$. Then we show in Section \ref{sub: drift processes} that $\set{\bq_n}{n \geq 1}$ is relatively compact in $D_{\ell_1}[0, \infty)$, i.e., every subsequence of $\set{\bq_n}{n \geq 1}$ has a further subsequence that converges weakly in $D_{\ell_1}[0, \infty)$ to some process $\bq$. Finally, \eqref{eq: drift process} is used to establish that the limit $\bq$ of any convergent subsequence satisfies \eqref{eq: fluid dynamics} almost surely. Essentially, the first two terms of \eqref{eq: drift process} yield the first term of \eqref{eq: fluid dynamics} and the last term of \eqref{eq: drift process} gives the last term of \eqref{eq: fluid dynamics}.

\subsection{Pseudo-separation property}
\label{sub: events separation property}

Below we define the pseudo-separation property mentioned in Assumption \ref{ass: fluid limit conditions}. This property applies to sequences of resampling processes $\calR_n$ and concerns the asymptotic behavior of the processes as $n \to \infty$. In contrast, the property of separating arrivals fully applies to individual resampling processes $\calR_n$; i.e., the number of servers $n$ is fixed.

\begin{definition}
	\label{def: separate events sufficiently}
	The resampling process $\calR_n$ is said to separate arrivals fully if at most one task arrives between any two successive resampling times with probability one. Consider the following random variables:
	\begin{equation*}
	\begin{split}
	&\Delta_n(T) \defeq \sup \set{t - \sigma_n^m}{m \leq \calR_n(T)\ \text{and}\ \sigma_n^m \leq t \leq \min\left\{\sigma_n^{m + 1}, T\right\}}, \\
	&\Sigma_n(T) \defeq \sum_{m = 1}^{\calR_n(T) + 1} \frac{1}{n^2}\left[\left(d_n^- + 1\right)\left(A_n^m + D_n^m - 1\right)A_n^m + \left(A_n^m\right)^2\right],
	\end{split}
	\end{equation*}
	where $A_n^m$ and $D_n^m$ are the number of arrivals and departures in $\left(\sigma_n^{m - 1}, \sigma_n^m\right]$, respectively. Also, let $\calK$ be the set of indexes $k$ such that $\calR_k$ does not separate arrivals fully. The resampling processes $\set{\calR_n}{n \geq 1}$ are said to pseudo-separate events if $\calK$ is finite or $\calK$ is infinite and the following limits hold:
	\begin{equation}
	\label{eq: event separation property}
	\Delta_k(T) \Rightarrow 0 \quad \text{as} \quad k \to \infty \quad \text{and} \quad \lim_{k \to \infty} E\left[\Sigma_k(T)\right] = 0 \quad \text{for all} \quad T \geq 0,
	\end{equation}
	where both limits are taken over the indexes $k \in \calK$.  
\end{definition}

It is possible that all the resampling processes $\calR_n$ separate arrivals fully and $E\left[\Sigma_n(T)\right]$ does not approach zero with $n$ for any $T \geq 0$. For example, if the resampling times coincide with the arrival times, then $A_n^m = 1$ and $E\left[D_n^m\right]$ is of order $n \left(\sigma_n^m - \sigma_n^{m - 1}\right)$. Therefore,
\begin{equation*}
E\left[\Sigma_n(T)\right] \geq E\left[\sum_{m = 1}^{\calR_n(T) + 1} \frac{\left(d_n^- + 1\right)D_n^m}{n^2}\right]
\end{equation*}
is lower bounded by a quantity of order $\left(d_n^- + 1\right) T / n$, which does not approach zero as $n \to \infty$ if $d_n^- / n \nrightarrow 0$. However, Theorem \ref{the: fluid limit} covers sequences of resampling processes such that $\calR_n$ separates arrivals fully for infinitely many $n$. For this reason we require that \eqref{eq: event separation property} holds only for the subsequence of processes that do not separate arrivals fully.

The next lemma gathers some useful properties of the random variables $A_n^m$ and $D_n^m$, and will be used to prove Proposition \ref{prop: admissible resampling processes}; we prove the lemma in Appendix \ref{app: auxiliary results}.

\begin{lemma}
	\label{lem: moments of number of arrivals and departures between resampling times}
	Let $A_n^m$ denote the number of tasks that arrive in $\left(\sigma_n^{m - 1}, \sigma_n^m\right]$, let $D_n^m$ be the number of tasks that depart in $\left(\sigma_n^{m - 1}, \sigma_n^m\right]$ and let $\calH_n \defeq \sigma\sett{\calR_n(t)}{t \geq 0}$ be the $\sigma$-algebra generated by the resampling times. If the resampling process is independent of the arrival times of tasks or is independent of the departure times of tasks, then
	\begin{subequations}
		\label{ch5-aux: arrivals and departures}
		\begin{align}
			&\expect*{A_n^m|\calH_n} = \var*{A_n^m|\calH_n} = \lambda_n \left(\sigma_n^m - \sigma_n^{m - 1}\right), \label{ch5-sau: arrivals} \\
			&\expect*{A_n^mD_n^m|\calH_n} \leq \expect*{A_n^m|\calH_n} n \left(\sigma_n^m - \sigma_n^{m - 1}\right), \label{ch5-sau: departures}
		\end{align}
	\end{subequations}
	respectively. If condition (c) of Proposition \ref{prop: admissible resampling processes} holds, then
	\begin{equation}
	\label{eq: bounds for condition c}
	\lim_{n \to \infty} \mu_nE\left[\sum_{m = 1}^{\calR_n(t) + 1} \left(\sigma_n^m - \sigma_n^{m - 1}\right)^2\right] = E\left[\left(\sigma_1^1\right)^2\right] t \quad \text{for all} \quad t \geq 0.
	\end{equation}
\end{lemma}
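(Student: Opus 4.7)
The plan is to handle the three statements of the lemma separately, exploiting a different aspect of the independence structure in each case.

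For \eqref{ch5-sau: arrivals}, I would condition on $\calH_n$, under which the endpoints $\sigma_n^{m-1}$ and $\sigma_n^m$ become known. Since the arrival process is a Poisson process of rate $\lambda_n$ and the hypothesis here is that $\calR_n$ is independent of the arrival times, the conditional law of $A_n^m$ given $\calH_n$ is Poisson with parameter $\lambda_n(\sigma_n^m - \sigma_n^{m-1})$. Both the conditional mean and variance of a Poisson random variable coincide with its parameter, which yields the two identities simultaneously.

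For \eqref{ch5-sau: departures}, I would use a stochastic-domination argument. One can build the system on a probability space that carries a family of independent unit-rate Poisson \emph{potential-departure} processes, one per server, with an actual departure occurring at a potential completion only when the affected server is non-empty; this is essentially the construction already employed in Section \ref{sub: stochastic equations}. In any interval of length $\delta$, $D_n^m$ is then pathwise bounded by the total number of potential completions in that interval, which is Poisson with parameter $n\delta$ and independent of both the arrival process and $\calR_n$. Conditioning on $\calH_n$ together with the entire arrival process therefore gives
\begin{equation*}
E\bigl[D_n^m \bigm| \calH_n,\, \text{arrivals}\bigr] \leq n\bigl(\sigma_n^m - \sigma_n^{m-1}\bigr),
\end{equation*}
and multiplying by the measurable factor $A_n^m$ and invoking the tower property yields \eqref{ch5-sau: departures}. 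The delicate point is that $A_n^m$ and $D_n^m$ are not conditionally independent given $\calH_n$, because arrivals feed the servers and thereby raise the departure rate; the pathwise Poisson bound is precisely what bypasses this coupling.

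For \eqref{eq: bounds for condition c}, under condition (c) of Proposition \ref{prop: admissible resampling processes} one can write $\sigma_n^m - \sigma_n^{m-1} = T_m/\mu_n$, where $\{T_m\}$ are the i.i.d.\ holding times of the base renewal process $\calR$ with $E[T_1] = 1$ and $E[T_1^2] < \infty$, and $\calR_n(t) = \calR(\mu_n t)$. Hence
\begin{equation*}
\mu_n \sum_{m = 1}^{\calR_n(t) + 1} \bigl(\sigma_n^m - \sigma_n^{m-1}\bigr)^2 = \frac{1}{\mu_n} \sum_{m = 1}^{\calR(\mu_n t) + 1} T_m^2.
\end{equation*}
Since $\calR(\mu_n t) + 1$ is a stopping time for the natural filtration of $\{T_m\}$, Wald's identity gives $E\bigl[\sum_{m=1}^{\calR(\mu_n t)+1} T_m^2\bigr] = E[T_1^2]\, E[\calR(\mu_n t) + 1]$, and the elementary renewal theorem yields $E[\calR(s)]/s \to 1/E[T_1] = 1$ as $s \to \infty$. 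Combining these with $\mu_n \to \infty$ produces the limit $E[T_1^2]\, t$, which matches the right-hand side under the paper's identification of $\sigma_1^1$ with the first jump of the base renewal process. The main obstacle I anticipate is making part (b) genuinely rigorous: one must ensure the auxiliary potential-departure processes can be coupled independently of $\calR_n$, which is why the explicit construction developed earlier is essential. Parts (a) and (c) then reduce to standard Poisson conditioning and renewal-theoretic facts.
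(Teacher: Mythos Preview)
Your arguments for \eqref{ch5-sau: arrivals} and \eqref{ch5-sau: departures} coincide with the paper's: condition on $\calH_n$ to make the interval deterministic, read off the Poisson mean and variance for arrivals, and dominate $D_n^m$ by the total number of potential departures (a rate-$n$ Poisson process independent of both arrivals and $\calR_n$) to decouple the product $A_n^m D_n^m$.

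For \eqref{eq: bounds for condition c} you take a different and somewhat cleaner route. The paper sets up a renewal equation for $\varphi_n(t) \defeq E\bigl[\sum_{m=1}^{\calR_n(t)+1}(\sigma_n^m-\sigma_n^{m-1})^2\bigr]$, solves it via \cite[Theorem 12.24]{kallenberg2021foundations} to obtain $\varphi_n(t)=E[(\sigma_n^1)^2](1+E[\calR_n(t)])$, and then applies the elementary renewal theorem. Your Wald argument reaches the same closed form in one step: since $\calR(\mu_n t)+1$ is a stopping time for the filtration generated by the holding times $\{T_m\}$ and $E[T_1^2]<\infty$ by hypothesis, Wald's identity gives $E\bigl[\sum_{m=1}^{\calR(\mu_n t)+1} T_m^2\bigr]=E[T_1^2]\,E[\calR(\mu_n t)+1]$, after which the elementary renewal theorem finishes exactly as in the paper. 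Both approaches are correct; yours avoids the renewal-equation machinery at the cost of checking the stopping-time property, while the paper's works directly at the level of the scaled process without passing through the base process $\calR$.
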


We now prove Proposition \ref{prop: admissible resampling processes}.

\begin{proof}[Proof of Proposition \ref{prop: admissible resampling processes}]
	In order to prove that $\set{\calR_n}{n \geq 1}$ pseudo-separates events, we must show that the limits in \eqref{eq: event separation property} hold when we only consider the indexes $n$ such that the resampling process does not separate arrivals fully. Hence, we may assume without loss of generality that the resampling process $\calR_n$ does not separate arrivals fully for any $n$; i.e., if~(a) holds, then we assume that $\kappa_n \geq 1$ for all $n$.
	
	Let us fix an arbitrary $T \geq 0$. First we establish that $\Delta_n(T) \Rightarrow 0$ as $n \to \infty$ when any of the conditions stated in the proposition holds. The latter limit clearly holds when (b) holds, which implies that $\Delta_n(T) \leq 1 / \mu_n$. If condition (a) holds instead, then
	\begin{equation*}
	\kappa_n + 1 \geq \left|\calN_n^a\left(t\right) - \calN_n^a\left(\sigma_n^m\right)\right| \geq \lambda_n \left|t - \sigma_n^m\right| - 2 \sup_{u \in [0, T]} \left|\calN_n^a(u) - \lambda_n u\right|
	\end{equation*}
	for all $m \leq \calR_n(T)$ and $\sigma_n^m \leq t \leq \min \left\{\sigma_n^{m + 1}, T\right\}$. It follows that
	\begin{equation}
	\label{eq: bound on delta n t}
	\Delta_n(T) \leq \frac{\kappa_n + 1}{\lambda_n} + \frac{2}{\lambda_n}\sup_{u \in [0, T]} \left|\calN_n^a(u) - \lambda_n u\right|.
	\end{equation}
	The right-hand side goes to zero in probability by \eqref{eq: arrival and resampling rates conditions} and the law of large numbers for the Poisson process, hence $\Delta_n(T) \Rightarrow 0$ as $n \to \infty$ also in this case. A similar argument applies when condition (c) holds. Indeed, note that
	\begin{equation*}
	1 \geq \left|\calR_n(t) - \calR_n\left(\sigma_n^m\right)\right| \geq \mu_n \left|t - \sigma_n^m\right| - 2 \sup_{u \in [0, T]} \left|\calR_n(u) - \mu_n u\right|
	\end{equation*}
	for all $m \leq \calR_n(T)$ and $\sigma_n^m \leq t \leq \min \left\{\sigma_n^{m + 1}, T\right\}$. Arguing as above, we conclude from the law of large numbers for the renewal process $\calR$ that $\Delta_n(T) \Rightarrow 0$ as $n \to \infty$; we refer to \cite[Theorem 5.10]{chen2013fundamentals} for a proof of the latter law of large numbers.
	
	We now prove that $E\left[\Sigma_n(T)\right] \to 0$ as $n \to \infty$. For this purpose we first note that
	\begin{equation*}
	E\left[\Sigma_n(T)\right] = \frac{1}{n^2}E\left[\sum_{m = 1}^{\calR_n(T) + 1} \expect*{\left(d_n^- + 1\right)\left(A_n^m + D_n^m - 1\right)A_n^m + \left(A_n^m\right)^2 | \calH_n}\right],
	\end{equation*}
	where $\calH_n \defeq \sigma\sett{\calR_n(t)}{t \geq 0}$ is the $\sigma$-algebra generated by the resampling times. Let
	\begin{equation*}
	Y_n^m \defeq \left(d_n^- + 1\right)\left(\expect*{A_n^m\left(A_n^m - 1\right) | \calH_n} + \expect*{A_n^mD_n^m | \calH_n}\right) + \expect*{\left(A_n^m\right)^2 | \calH_n}
	\end{equation*}
	denote term $m$ in the above summation. Then we may write
	\begin{equation*}
	E\left[\Sigma_n(T)\right] \leq \frac{1}{n^2}E\left[\sum_{m = 1}^{\calR_n(T)} Y_n^m\right] + \frac{1}{n^2}E\left[Y_n^{\calR_n(T) + 1}\right].
	\end{equation*}
	Next we prove that the first term on the right-hand side approaches zero as $n \to \infty$, and it is straightforward to check that the second term also vanishes; considering the sum of $Y_n^m$ over $m = 1, \dots, \calR_n(T)$ instead of $m = 1, \dots, \calR_n(T) + 1$ simplifies calculations.

	If (a) holds, then Lemma \ref{lem: moments of number of arrivals and departures between resampling times} yields
	\begin{equation*}
	Y_n^m \leq \left(d_n^- + 1\right)\left[\left(\kappa_n + 1\right)\kappa_n + (\kappa_n + 1)n\left(\sigma_n^m - \sigma_n^{m - 1}\right)\right] + \left(\kappa_n + 1\right)^2.
	\end{equation*}
	Moreover, $E\left[\calR_n(T)\right] \leq \lambda_n T / \left(\kappa_n + 1\right)$ and thus
	\begin{equation*}
	\frac{1}{n^2}E\left[\sum_{m = 1}^{\calR_n(T)} Y_n^m\right] \leq \frac{\left(d_n^- + 1\right)\left[\kappa_n \lambda_n T + \left(\kappa_n + 1\right) n T\right] + \left(\kappa_n + 1\right) \lambda_n T}{n^2}.
	\end{equation*}
	If $\kappa_n \geq 1$ for all $n$, then the right-hand side approaches zero as $n \to \infty$ by \eqref{eq: arrival and resampling rates conditions}.
	
	Suppose now that conditions (b) or (c) hold. Lemma \ref{lem: moments of number of arrivals and departures between resampling times} implies that
	\begin{equation*}
	Y_n^m \leq \left(d_n^- + 1\right)\left(\lambda_n^2 + \lambda_n n\right)\left(\sigma_n^m - \sigma_n^{m - 1}\right)^2 + \lambda_n \left(\sigma_n^m - \sigma_n^{m - 1}\right) + \lambda_n^2 \left(\sigma_n^m - \sigma_n^{m - 1}\right)^2.
	\end{equation*}
	If (b) holds, then $\sigma_n^m - \sigma_n^{m - 1} \leq 1 / \mu_n$ and $\left(\sigma_n^m - \sigma_n^{m - 1}\right)^2 \leq \left(\sigma_n^m - \sigma_n^{m - 1}\right) / \mu_n$. Therefore,
	\begin{equation*}
	\frac{1}{n^2}E\left[\sum_{m = 1}^{\calR_n(T)} Y_n^m\right] \leq \frac{\left(d_n^- + 1\right)\left(\lambda_n^2 + \lambda_n n\right)T + \lambda_n^2 T}{\mu_nn^2} + \frac{\lambda_n T}{n^2}.
	\end{equation*}
	It follows from \eqref{eq: arrival and resampling rates conditions} that the right-hand side vanishes as $n \to \infty$. Finally, if (c) holds, then
	\begin{equation*}
	\frac{1}{n^2}E\left[\sum_{m = 1}^{\calR_n(T)} Y_n^m\right] \leq \frac{\left(d_n^- + 1\right)\left(\lambda_n^2 + \lambda_n n\right) + \lambda_n^2}{n^2}E\left[\sum_{m = 1}^{\calR_n(t) + 1} \left(\sigma_n^m - \sigma_n^{m - 1}\right)^2\right] + \frac{\lambda_n T}{n^2},
	\end{equation*}
	and the right-hand side approaches zero as $n \to \infty$ by \eqref{eq: arrival and resampling rates conditions} and \eqref{eq: bounds for condition c}.
\end{proof}

The next corollary says that if $\set{\calR_n}{n \geq 1}$ pseudo-separates events, then $\Delta_n(T) \Rightarrow 0$ as $n \to \infty$ for all $T \geq 0$. In other words, this means that the limit holds without considering only the resampling processes that do not separate arrivals fully.

\begin{corollary}
	\label{cor: limit of delta n t}
	If $\set{\calR_n}{n \geq 1}$ pseudo-separates events, then
	\begin{equation*}
	\Delta_n(T) \Rightarrow 0 \quad \text{as} \quad n \to \infty \quad \text{for all} \quad T \geq 0.
	\end{equation*}
\end{corollary}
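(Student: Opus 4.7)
The plan is to reduce the corollary to the case of indices $n$ for which $\calR_n$ separates arrivals fully, since the complementary case is handled directly by the pseudo-separation assumption. Set $\calJ \defeq \N \setminus \calK$. If $\calK$ is infinite, the pseudo-separation property gives $\Delta_k(T) \Rightarrow 0$ along $k \in \calK$ outright; if $\calK$ is finite, only finitely many indices lie outside $\calJ$ and therefore do not affect the limit. In both situations it suffices to prove $\Delta_n(T) \Rightarrow 0$ along $n \in \calJ$.

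To handle $n \in \calJ$, I would note that the event $\{\Delta_n(T) > \delta\}$ forces the existence of some $m \leq \calR_n(T)$ with $\min\{\sigma_n^{m+1}, T\} - \sigma_n^m > \delta$. By the full-separation property, the interval $(\sigma_n^m, \sigma_n^{m+1}]$ contains at most one arrival of $\calN_n^a$, and hence so does its sub-interval $(\sigma_n^m, \min\{\sigma_n^{m+1}, T\}]$, whose length exceeds $\delta$. The key step is to transfer this random interval onto a deterministic cover: subdividing $[0, T]$ into consecutive half-open cells $J_k \defeq ((k - 1)\delta/2, k\delta/2]$ for $k = 1, \ldots, \lceil 2T/\delta \rceil$, any sub-interval of $[0, T]$ of length strictly greater than $\delta$ contains at least one $J_k$ entirely. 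Therefore some deterministic cell $J_k$ contains at most one arrival of $\calN_n^a$, and a union bound over the fixed number of cells yields
\begin{equation*}
P\left(\Delta_n(T) > \delta\right) \leq \left\lceil 2T/\delta \right\rceil P\left(\calN^a\left(\lambda_n \delta/2\right) \leq 1\right).
\end{equation*}
Since $\lambda_n / n \to \lambda > 0$ forces $\lambda_n \to \infty$, the right-hand side vanishes as $n \to \infty$. Combining this with the pseudo-separation case completes the proof. The only non-routine step is the grid argument that converts the random "no-resampling" interval into a deterministic one, which is what makes the explicit Poisson tail bound applicable; everything else is a straightforward application of the definitions and Assumption \ref{ass: fluid limit conditions}.
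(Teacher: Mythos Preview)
Your proof is correct. The decomposition into indices in $\calK$ and $\calJ$ is exactly the same as the paper's, and the grid argument for $n \in \calJ$ is valid: an interval of length greater than $\delta$ inside $[0,T]$ always contains some cell $J_k$ (the range $[2a/\delta + 1, 2b/\delta]$ of admissible $k$ has length strictly greater than $1$), and the union bound over the $\lceil 2T/\delta\rceil$ cells together with $P(\calN^a(\lambda_n\delta/2)\le 1)=\e^{-\lambda_n\delta/2}(1+\lambda_n\delta/2)\to 0$ finishes the job.

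The paper, however, takes a shorter route for the fully-separating indices. Instead of a covering argument it reuses the pathwise inequality already derived in the proof of Proposition~\ref{prop: admissible resampling processes}: since at most one arrival occurs in $(\sigma_n^m,\sigma_n^{m+1}]$, one has $1\ge \calN_n^a(t)-\calN_n^a(\sigma_n^m)\ge \lambda_n(t-\sigma_n^m)-2\sup_{u\in[0,T]}|\calN_n^a(u)-\lambda_n u|$, which yields the bound~\eqref{eq: bound on delta n t} with $\kappa_n=0$, namely $\Delta_n(T)\le 1/\lambda_n + (2/\lambda_n)\sup_{u\in[0,T]}|\calN_n^a(u)-\lambda_n u|$. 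The right-hand side vanishes in probability by the functional law of large numbers for the Poisson process. This gives a deterministic pathwise bound on $\Delta_n(T)$ rather than a probability estimate, and it is essentially free because the inequality was already on the page. Your approach, by contrast, is self-contained and does not require pointing back to the earlier proof, at the cost of the extra covering step.
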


\begin{proof}
	Note that \eqref{eq: bound on delta n t} with $\kappa_n = 0$ holds when $\calR_n$ separates arrivals fully.
\end{proof}

\subsection{Vanishing processes}
\label{sub: vanishing processes}

Endow $\R^\N$ with the metric
\begin{equation*}
d(x, y) \defeq \sum_{i = 0}^\infty \frac{\min\left\{|x(i) - y(i)|, 1\right\}}{2^i} \quad \text{for all} \quad x, y \in \R^\N,
\end{equation*}
which is compatible with the product topology. Also, let $D_{\R^\N}[0, \infty)$ be the space of c\`adl\`ag functions from $[0, \infty)$ into $\R^\N$ with the topology of uniform convergence over compact sets. In this section we establish that $\bv_n \Rightarrow 0$ in $D_{\R^\N}[0, \infty)$ as $n \to \infty$.

For this purpose, let $D_\R[0, T]$ be the space of real c\`adl\`ag functions defined on $[0, T]$, which we endow with the uniform norm, defined by
\begin{equation*}
\norm{\bx}_T \defeq \sup_{t \in [0, T]} |\bx(t)| \quad \text{for all} \quad \bx \in D_\R[0, T].
\end{equation*}
The following lemma is proved in Appendix \ref{app: auxiliary results}.

\begin{lemma}
	\label{lem: convergence in probability from components}
	Suppose that $\set{\bx_n}{n \geq 1}$ are random variables with values in $D_{\R^\N}[0, \infty)$. The following properties are equivalent.
	\begin{enumerate}
		\item[(a)] $\bx_n \Rightarrow 0$ in $D_{\R^\N}[0, \infty)$ as $n \to \infty$.
		
		\item[(b)] $\bx_n(i) \Rightarrow 0$ in $D_\R[0, T]$ as $n \to \infty$ for all $i \geq 0$ and $T \geq 0$.
	\end{enumerate}
\end{lemma}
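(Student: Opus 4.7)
The plan is to exploit that weak convergence to a deterministic limit is equivalent to convergence in probability (as noted in Section \ref{sub: basic notation}), so both (a) and (b) can be rephrased in probability terms. After fixing $T \geq 0$, the equivalence reduces to showing that $\sup_{t \in [0, T]} d(\bx_n(t), 0) \to 0$ in probability if and only if $\sup_{t \in [0, T]} |\bx_n(t, i)| \to 0$ in probability for every $i \geq 0$.

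For the implication (a) $\Rightarrow$ (b), I would note that for each fixed $i$ the bound $\min\{|x(i)|, 1\} \leq 2^i d(x, 0)$ is immediate from the definition of $d$. Hence, once the event $\{\sup_{t \in [0, T]} d(\bx_n(t), 0) < 2^{-i}\}$ occurs, we automatically have
\[
\sup_{t \in [0, T]} |\bx_n(t, i)| \leq 2^i \sup_{t \in [0, T]} d(\bx_n(t), 0).
\]
Combining this bound with the assumed convergence in probability of $\sup_{t \in [0, T]} d(\bx_n(t), 0)$ to zero yields componentwise convergence in probability on $[0, T]$, which is precisely (b).

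For the reverse implication (b) $\Rightarrow$ (a), I would truncate the series defining $d$ at some level $N$ and use that each summand is bounded by $2^{-i}$ uniformly in the path, writing
\[
\sup_{t \in [0, T]} d\bigl(\bx_n(t), 0\bigr) \leq \sum_{i = 0}^N \frac{\sup_{t \in [0, T]} |\bx_n(t, i)|}{2^i} + \sum_{i > N} \frac{1}{2^i}.
\]
Given $\varepsilon > 0$, I would first choose $N$ large enough so that the geometric tail is below $\varepsilon / 2$, and then use (b) for each of the finitely many indexes $i = 0, \dots, N$ to make the first sum smaller than $\varepsilon / 2$ with probability tending to one as $n \to \infty$. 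This gives (a) after invoking the equivalence between weak convergence to zero and convergence in probability to zero.

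There is no serious obstacle in this lemma; the only points to be careful about are the standard identification of weak convergence to a deterministic limit with convergence in probability, and the harmless fact that the truncated tail of the series for $d$ is controlled by a deterministic geometric remainder (because $\min\{|\bx_n(t, i)|, 1\} \leq 1$), which avoids any issue with exchanging a supremum over $t$ with an infinite sum over $i$.
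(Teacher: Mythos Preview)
Your proof is correct and follows essentially the same approach as the paper's: both arguments reduce to convergence in probability, truncate the series defining $d$ at a finite level, control the tail by the deterministic geometric remainder, and handle the finitely many remaining coordinates by a union bound. The paper is slightly more explicit in that it works directly with the metric $\eta$ on $D_{\R^\N}[0,\infty)$ and truncates both the sum over $T$ and the sum over $i$ simultaneously, whereas you absorb the reduction over $T$ into the phrase ``after fixing $T \geq 0$''; this is harmless, but you might add one sentence noting that convergence in $D_{\R^\N}[0,\infty)$ to zero is equivalent to $\sup_{t\in[0,T]} d(\bx_n(t),0) \to 0$ in probability for every $T$, so the reader sees why the reduction is legitimate.
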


By Lemma \ref{lem: convergence in probability from components}, we can prove that $\bv_n \Rightarrow 0$ in $D_{\R^\N}[0, \infty)$ by showing that $\bv_n(i) \Rightarrow 0$ in $D_\R[0, T]$ for all $i \geq 0$ and $T \geq 0$. We prove this by showing that the first four terms and the difference between the last two terms on the right-hand side of \eqref{eq: vanishing process} converge to zero in probability. In the next two sections we show that $\bL_n(i) \Rightarrow 0$ and $\bM_n(i) \Rightarrow 0$ in $D_\R[0, T]$ for all $i \geq 0$ and $T \geq 0$. Then we invoke the law of large numbers for the Poisson process to prove that the difference between the last two terms of \eqref{eq: vanishing process} also converges to zero.

\subsubsection{Limit of the processes $\bL_n$}
\label{subsub: limit of the processes l}

For each $t \geq 0$, we define
\begin{equation*}
K_n(t) \defeq \set{u \in V_n}{\bX_n(t, v) = \bar{\bX}_n(t, v)\ \text{if}\ v = u\ \text{or}\ (u, v) \in \bE_n(t)}.
\end{equation*}
Note that all the servers in the neighborhood of a server $u \in K_n(t)$ have the same number of tasks as they had at the last resampling time. Hence,
\begin{equation*}
\left|I_n^m\left(X_n^m, i\right) - I_n^m(\bar{X}_n^m, i)\right| \leq \ind{u_n^m \notin K_n\left(\tau_n^{m-}\right)} \quad \text{for all} \quad i \geq 0 \quad \text{and} \quad m \geq 1,
\end{equation*}
where we recall that $u_n^m$ is the server where a task appears at time $\tau_n^m$.

\begin{remark}
	\label{rem: connection with bramson}
	If a task appears in the complement $K_n^c(t)$ of $K_n(t)$, then the dispatching decision is influenced by a server that experienced an arrival or departure between time $t$ and the preceding resampling time. The set $K_n^c(t)$ is reminiscent of the \emph{influence process} introduced in the proof of \cite[Proposition 7.1]{bramson2012asymptotic}; the setup considered there is a system of parallel single-server queues where the classical power-of-$d$ policy is used to balance the load. The influence process of a server $u$ describes the set of servers that influence the queue length of $u$ over $[0, t]$. This process is used in \cite{bramson2012asymptotic} to prove that a fixed and finite set of queue lengths observed at a fixed time $t$ become asymptotically independent and identically distributed as the number of servers approaches infinity, provided that all the queue lengths in the system are independent and identically distributed at time zero. The proof relies on approximating the number of servers in the influence process of a single server by a continuous-time branching process where each parent has $d$ children. However, the present paper uses the sets $K_n^c(t)$ to show that $\norm{\bL_n(i)}_T$ converges in probability to zero. For this purpose we provide a bound for the size of the set $K_n^c(t)$. The bound increases linearly with the number of arrivals since the preceding resampling time, as in a continuous-time branching process, but depends on the number of departures as well.
\end{remark}

Let $A_n^m$ denote the number of tasks that arrive in $\left(\sigma_n^{m - 1}, \sigma_n^m\right]$ and let $D_n^m$ denote the number of tasks that depart. If $\sigma_n^{m - 1} < t \leq \sigma_n^m$ and $k$ tasks arrive in $\left(\sigma_n^{m - 1}, t\right]$, then at time $t$ at most $k + D_n^m$ servers have a number of tasks that is different from the number of tasks that they had at time $\sigma_n^{m - 1}$. Since each of these servers can be in the neighborhood of at most $d_n^-$ servers, it follows that at most $\left(k + D_n^m\right)\left(d_n^- + 1\right)$ servers are not in $K_n(t)$. Thus, the random variables $A_n^m$ and $D_n^m$ can be used to upper bound $\norm{\bL_n(i)}_T$ for all $i \geq 0$ and $T \geq 0$. This observation is used in the following proposition.

\begin{proposition}
	\label{prop: limit of the processes l}
	We have
	\begin{equation*}
	\bL_n(i) \Rightarrow 0 \quad \text{in} \quad D_\R[0, T] \quad \text{as} \quad n \to \infty \quad \text{for all} \quad i \geq 0 \quad \text{and} \quad T \geq 0,
	\end{equation*}
	and in particular $\bL_n \Rightarrow 0$ in $D_{\R^\N}[0, \infty)$ as $n \to \infty$.
\end{proposition}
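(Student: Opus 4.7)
The plan is to show that each coordinate $\bL_n(i)$ converges uniformly over $[0,T]$ to zero in probability, and then invoke Lemma \ref{lem: convergence in probability from components}. Since $|I_n^m(X_n^m, i) - I_n^m(\bar{X}_n^m, i)| \leq \indc_{\{u_n^m \notin K_n(\tau_n^{m-})\}}$ uniformly in $i$, the key pointwise bound is
\begin{equation*}
\norm{\bL_n(i)}_T \leq \frac{1}{n}\sum_{m = 1}^{\calN_n^a(T)} \indc_{\{u_n^m \notin K_n(\tau_n^{m-})\}},
\end{equation*}
so it suffices to prove that the expectation of the right-hand side vanishes as $n \to \infty$. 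Note that if $\calR_n$ separates arrivals fully, then $\bL_n \equiv 0$ by \eqref{eq: decomposition for graph resampled between every two arrivals}, so only the subsequence of indexes where $\calR_n$ does not separate arrivals fully matters, which is precisely the subsequence controlled by the pseudo-separation hypothesis.

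First I would exploit the independence of the selection variable $u_n^m$ from everything else. The set $K_n(\tau_n^{m-})$ is determined by the history strictly before $\tau_n^m$, so conditioning on that history and using the uniformity of $u_n^m$ on $V_n$ gives $E\bigl[\indc_{\{u_n^m \notin K_n(\tau_n^{m-})\}}\bigr] = E\bigl[|K_n^c(\tau_n^{m-})|/n\bigr]$. The next step is the combinatorial bound on $|K_n^c(t)|$ explained right before the proposition statement: if $t \in (\sigma_n^{r-1}, \sigma_n^r]$ and $k$ arrivals have already occurred in $(\sigma_n^{r-1}, t]$, then the number of servers whose queue length differs from its value at $\sigma_n^{r-1}$ is at most $k + D_n^r$, each such server is in the in-neighborhood of at most $d_n^-$ other servers, and thus $|K_n^c(t)| \leq (k + D_n^r)(d_n^- + 1)$.

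Applied at each arrival time $\tau_n^{m-}$ (where the number of previous arrivals in the current inter-resampling interval is at most $A_n^r - 1$), this yields
\begin{equation*}
\sum_{m: \tau_n^m \in (\sigma_n^{r-1}, \sigma_n^r]} |K_n^c(\tau_n^{m-})| \leq (d_n^- + 1) \sum_{k = 1}^{A_n^r} (k - 1 + D_n^r) \leq (d_n^- + 1) A_n^r (A_n^r + D_n^r - 1).
\end{equation*}
Summing over resampling intervals intersecting $[0, T]$ (there are $\calR_n(T) + 1$ of them) and dividing by $n^2$, I recognize the resulting expression as bounded above by $\Sigma_n(T)$ from Definition \ref{def: separate events sufficiently}. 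Therefore $E\norm{\bL_n(i)}_T \leq E[\Sigma_n(T)]$, which tends to zero as $n \to \infty$ by the pseudo-separation hypothesis (restricted to the subsequence where $\calR_n$ does not separate arrivals fully, and trivially on the complementary subsequence since then $\bL_n \equiv 0$). Markov's inequality then gives $\norm{\bL_n(i)}_T \inprob 0$ for every $i \geq 0$ and $T \geq 0$, and Lemma \ref{lem: convergence in probability from components} upgrades this to $\bL_n \Rightarrow 0$ in $D_{\R^\N}[0, \infty)$.

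The main obstacle is really book-keeping: one must correctly index the arrivals within a resampling interval so that the cumulative bound on $|K_n^c(\tau_n^{m-})|$ matches the quantity $A_n^r(A_n^r + D_n^r - 1)$ appearing in the definition of $\Sigma_n(T)$, and one must justify that the last (possibly incomplete) interval $(\sigma_n^{\calR_n(T)}, T]$ contributes only the extra $m = \calR_n(T) + 1$ term in $\Sigma_n(T)$. Everything else is conditional expectation combined with the pseudo-separation property built into Assumption \ref{ass: fluid limit conditions}.
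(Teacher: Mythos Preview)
Your proposal is correct and follows essentially the same route as the paper's proof: bound $\norm{\bL_n(i)}_T$ by the sum of indicators $\ind{u_n^m \notin K_n(\tau_n^{m-})}$, exploit the uniformity and independence of $u_n^m$ to replace each indicator by $|K_n^c(\tau_n^{m-})|/n$, apply the combinatorial bound $|K_n^c| \leq (d_n^- + 1)(k + D_n^r)$, sum over resampling intervals to obtain something dominated by $\Sigma_n(T)$, and conclude via Markov's inequality and the pseudo-separation hypothesis. The paper handles the two cases (fully separating versus not) and the incomplete final interval exactly as you describe.
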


\begin{proof}
We must prove that
\begin{equation*}
\lim_{n \to \infty} P\left(\norm{\bL_n(i)}_T \geq \varepsilon\right) = 0 \quad \text{for all} \quad \varepsilon > 0, \quad i \geq 0 \quad \text{and} \quad T \geq 0.
\end{equation*}
For this purpose, let us fix $\varepsilon > 0$, $i \geq 0$ and $T \geq 0$, and note that
\begin{equation*}
\norm{\bL_n(i)}_T \leq \frac{1}{n} \sum_{m = 1}^{\calN_n^a(T)} \left|I_n^m\left(X_n^m, i\right) - I_n^m\left(\bar{X}_n^m, i\right)\right| \leq \frac{1}{n} \sum_{m = 1}^{\calN_n^a(T)} \ind{u_n^m \notin K_n\left(\tau_n^{m-}\right)}.
\end{equation*}
By Markov's inequality, we may focus on bounding the expectation of the right-hand side:
\begin{equation*}
P\left(\norm{\bL_n(i)}_T \geq \varepsilon\right) \leq P\left(\frac{1}{n} \sum_{m = 1}^{\calN_n^a(T)} \ind{u_n^m \notin K_n\left(\tau_n^{m-}\right)} \geq \varepsilon\right) \leq \frac{1}{n \varepsilon} E \left[\sum_{m = 1}^{\calN_n^a(T)} \ind{u_n^m \notin K_n\left(\tau_n^{m-}\right)}\right].
\end{equation*}

Let $A_n^l$ and $D_n^l$ be the number of arrivals and departures in $\left(\sigma_n^{l - 1}, \sigma_n^l\right]$, respectively, and suppose that  $\tau_n^m < \tau_n^{m + 1} < \dots < \tau_n^{m + A_n^l - 1}$ are all the arrival times in this interval. Then
\begin{equation*}
\left|K_n^c\left(\tau_n^{m + k -}\right)\right| \leq \left(k + D_n^l\right) \left(d_n^- + 1\right) \quad \text{for all} \quad 0 \leq k \leq A_n^l - 1,
\end{equation*}
where $K_n^c(t)$ denotes the complement of $K_n(t)$. Recall that this holds since the number of tasks may have changed in at most $k + D_n^l$ servers between $\sigma_n^{l - 1}$ and right before $\tau_n^{m + k}$, and each server can be in the neighborhood of at most $d_n^-$ other servers.

Let $\calG_n \defeq \sigma \sett{\calN_n^a(t), \calR_n(t)}{t \geq 0}$ denote the $\sigma$-algebra generated by the arrival and resampling times. Since $u_n^m$ is uniformly distributed in $V_n$, the above observation about the sets $K_n^c\left(\tau_n^{m-}\right)$ implies that
\begin{align*}
\frac{1}{n}E \left[\sum_{m = 1}^{\calN_n^a(T)} \ind{u_n^m \notin K_n\left(\tau_n^{m-}\right)}\right] &= \frac{1}{n}E \left[\sum_{m = 1}^{\calN_n^a(T)} \expect*{\ind{u_n^m \notin K_n\left(\tau_n^{m-}\right)}|\calG_n}\right] \\
&\leq \frac{1}{n}E\left[\sum_{l = 1}^{\calR_n(T) + 1} \sum_{k = 0}^{A_n^l - 1} \frac{\left(k + D_n^l\right)\left(d_n^- + 1\right)}{n}\right] \\
&\leq \frac{1}{n^2}E\left[\sum_{l = 1}^{\calR_n(T) + 1} \left(d_n^- + 1\right)\left[A_n^l\left(A_n^l - 1\right) + A_n^lD_n^l\right]\right].
\end{align*}

The right-hand side is upper bounded by $E\left[\Sigma_n(T)\right]$. As a result, if there are infinitely many indexes $n$ such that $\calR_n$ does not separate arrivals fully, then the right-hand side of the above equation converges to zero as $n \to \infty$ by \eqref{eq: event separation property}. Moreover, $\norm{\bL_n(i)}_T = 0$ if $\calR_n$ separates arrivals fully by \eqref{eq: decomposition for graph resampled between every two arrivals}. Therefore,
\begin{equation*}
\lim_{n \to \infty} P\left(\norm{\bL_n(i)}_T \geq \varepsilon\right) = 0,
\end{equation*}
and this completes the proof.
\end{proof}

\subsubsection{Limit of the processes $\bM_n$}
\label{subsub: limit of the processes m}

Let $\calF_{n, t} \defeq \sigma \sett{\calR_n(s), \bG_n(s), \bX_n(s)}{0 \leq s \leq t}$ denote the $\sigma$-algebra generated by the resampling times and the history of the system up to time $t$. The resampling times are stopping times with respect to this filtration because $\left\{\sigma_n^m \leq t\right\} = \left\{\calR_n(t) \geq m\right\}$ for all $m, t \geq 0$. Therefore, the $\sigma$-algebra $\calF_n^m \defeq \calF_{n, \sigma_n^m}$ is well-defined for all $m \geq 0$.

%If $\calR_n$ separates arrivals fully, then $\bM_n(i)$ is a continuous-time martingale with respect to the filtration $\set{\calF_{n, t}}{t \geq 0}$. In general, we have the following result.

\begin{lemma}
	\label{lem: martingale}
	Let $M_n^m(i) \defeq \bM_n\left(\sigma_n^m, i\right)$ for $i \geq 0$ and $m \geq 0$. The process $\set{M_n^m(i)}{m \geq 0}$ is a discrete-time martingale with respect to the filtration $\set{\calF_n^m}{m \geq 0}$ for all $i \geq 0$.
\end{lemma}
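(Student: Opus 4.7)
The plan is to check the three defining properties of a discrete martingale with respect to $\{\calF_n^m\}_{m \geq 0}$: adaptedness of $M_n^m(i)$, integrability, and the zero-increment identity. Adaptedness is essentially by construction, as $M_n^m(i) = \bM_n(\sigma_n^m, i)$ is a deterministic functional of the stochastic primitives driving the system on $[0, \sigma_n^m]$, and $\calF_n^m$ encodes exactly that information. Integrability follows from the crude bound $|M_n^m(i)| \leq \calN_n^a(\sigma_n^m)/n$ together with $E[\calN_n^a(\sigma_n^m)] < \infty$.

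The heart of the argument is the zero-increment identity. I would first write
\begin{equation*}
M_n^{m+1}(i) - M_n^m(i) = \frac{1}{n}\sum_{k \in \calJ_n^m} \left[I_n^k(\bar{X}_n^k, i) - \beta_n(\bar{q}_n^k(i))\right],
\end{equation*}
where $\calJ_n^m \defeq \{k : \tau_n^k \in (\sigma_n^m, \sigma_n^{m+1}]\}$ indexes the arrivals in the $m$-th resampling interval. For every such $k$, the sampled quantities satisfy $\bar{X}_n^k = \bX_n(\sigma_n^m)$ and $\bar{q}_n^k = \bq_n(\sigma_n^m)$, so both are $\calF_n^m$-measurable; in particular $\beta_n(\bar{q}_n^k(i))$ is $\calF_n^m$-measurable. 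The only randomness beyond $\calF_n^m$ entering $I_n^k(\bar{X}_n^k, i)$ comes from the graph $G_n^m$ that is active on $(\sigma_n^m, \sigma_n^{m+1}]$ and from the uniform selection $u_n^k$. Both are fresh stochastic primitives, independent of $\calF_n^m$ and of each other, and the distribution of $G_n^m$ is invariant under permutations of $V_n$. The same symmetry argument that produced \eqref{eq: expectation of assigning indicators} then yields, for each $k \in \calJ_n^m$,
\begin{equation*}
E\left[I_n^k(\bar{X}_n^k, i) \bigm| \calF_n^m, \calJ_n^m, (\tau_n^j)_{j \in \calJ_n^m}\right] = \beta_n(\bar{q}_n^k(i)),
\end{equation*}
so each summand has zero conditional mean. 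Summing over $k$, dividing by $n$, and applying the tower property under $\calF_n^m$ then delivers $E[M_n^{m+1}(i) - M_n^m(i) \mid \calF_n^m] = 0$.

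When $\calR_n$ separates arrivals fully, the decomposition \eqref{eq: decomposition for graph resampled between every two arrivals} uses $(X_n^k, q_n^k)$ in place of the sampled pair, but $\calJ_n^m$ contains at most one index and the same argument applies after additionally conditioning on the departures in $(\sigma_n^m, \tau_n^k)$, which are governed by the independent Poisson clocks $\calN_i^d$ and therefore leave $G_n^m$ and $u_n^k$ independent of the enlarged conditioning. The main delicate point to verify is thus just the independence of $G_n^m$ and $u_n^k$ from the relevant conditioning, which needs some care only in case~(a) of Proposition~\ref{prop: admissible resampling processes}, where $\sigma_n^{m+1}$ is determined by the arrival times. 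Even there, the graph samples $G_n^m$ are built from stochastic primitives independent of the arrival, departure and selection processes, so the required independence survives.
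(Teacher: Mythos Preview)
Your proposal is correct and follows essentially the same route as the paper. The paper achieves the freezing of the random index set by conditioning on the larger $\sigma$-algebra $\calG_n^m \defeq \calF_n^m \vee \sigma\sett{\calN_n^a(t), \calR_n(t)}{t \geq 0}$ rather than on just $\calJ_n^m$ and $(\tau_n^j)_{j \in \calJ_n^m}$, and in the fully-separating case it conditions directly on $X_n^l$ instead of on the departure primitives in $(\sigma_n^m, \tau_n^l)$; but these are cosmetic differences, and the core mechanism---$\bar{X}_n^k$ is $\calF_n^m$-measurable, the fresh graph $G_n^m$ and selection $u_n^k$ are independent of the enlarged conditioning, and \eqref{eq: expectation of assigning indicators} then gives zero---is identical.
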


\begin{proof}
Suppose first that $\bM_n$ is given by \eqref{eq: general decomposition}, and let $\calG_n^m \defeq \calF_n^m \vee \sigma \sett{\calN_n^a(t), \calR_n(t)}{t \geq 0}$ be the smallest $\sigma$-algebra that contains $\calF_n^m$ and the $\sigma$-algebra generated by all the arrival and resampling times. For each $m \geq 0$, we have
\begin{equation*}
\begin{split}
\expect*{M_n^{m + 1}(i) - M_n^m(i) | \calF_n^m} &= \expect*{\expect*{M_n^{m + 1}(i) - M_n^m(i) | \calG_n^m} | \calF_n^m} \\
&= \expect*{\frac{1}{n}\sum_{\sigma_n^m < \tau_n^l \leq \sigma_n^{m + 1}} \expect*{I_n^l\left(\bar{X}_n^l, i\right) - \beta_n\left(\bar{q}_n^l(i)\right) | \calG_n^m} | \calF_n^m}.
\end{split}
\end{equation*}
The random variables $\bar{X}_n^l$ are all equal and $\calF_n^m$-measurable, thus also $\calG_n^m$-measurable. But the graph $G_n^m$ used throughout $\left(\sigma_n^m, \sigma_n^{m + 1}\right]$ is independent of $\calG_n^m$. It follows from \eqref{eq: expectation of assigning indicators} that each term in the above summation is zero, thus the right-hand side of the equation is zero and this proves that $\set{M_n^m(i)}{m \geq 0}$ is a martingale.

Suppose now that the resampling process separates arrivals fully and \eqref{eq: decomposition for graph resampled between every two arrivals} applies, then
\begin{equation*}
\expect*{M_n^{m + 1}(i) - M_n^m(i) | \calF_n^m} = \expect*{\frac{1}{n}\sum_{\sigma_n^m < \tau_n^l \leq \sigma_n^{m + 1}} \expect*{I_n^l\left(X_n^l, i\right) - \beta_n\left(q_n^l(i)\right) | \calG_n^m} | \calF_n^m}.
\end{equation*}
Since $\calR_n$ separates arrivals fully, the sum has zero terms or one term. In the latter case:
\begin{equation*}
\expect*{I_n^l\left(X_n^l, i\right) - \beta_n\left(q_n^l(i)\right) | \calG_n^m} = \expect*{\expect*{I_n^l\left(X_n^l, i\right) - \beta_n\left(q_n^l(i)\right) | X_n^l, \calG_n^m} | \calG_n^m} = 0,
\end{equation*}
because the graph $G_n^m$ used in $\left(\sigma_n^m, \sigma_n^{m + 1}\right]$ is independent of the $\sigma$-algebra $\calG_n^m \vee \sigma \left(X_n^l\right)$ generated by $\calG_n^m$ and the state $X_n^l$ of the system prior to the first arrival following $\sigma_n^m$.
\end{proof}

\begin{remark}
	\label{rem: martingale property proof}
	The argument at the end of the proof of Lemma \ref{lem: martingale} only works because $\tau_n^l$ is the time of the first arrival after $\sigma_n^m$. Suppose that several tasks arrive in $\left(\sigma_n^m, \sigma_n^{m + 1}\right]$ and let $\tau_n^l < \tau_n^{l + 1} < \dots < \tau_n^{l + k}$ denote the arrival times. If $0 < j \leq k$, then the difference between the random variables $\bar{X}_n^{l + j}$ and $X_n^{l + j}$ depends on how the graph $G_n^m$ was used to dispatch the first $j$ tasks. Since these random variables are measurable with respect to $\calG_n^m \vee \sigma \left(X_n^{l + j}\right)$, it follows that $G_n^m$ and $\calG_n^m \vee \sigma \left(X_n^{l + j}\right)$ are not independent; knowing how $\bX_n$ changed over a certain number of arrivals provides information about the graph.
\end{remark}

The next lemma implies that we can use the discrete-time martingale $\set{M_n^m(i)}{m \geq 0}$ to prove that the continuous-time process $\bM_n(i)$ converges weakly to zero.

\begin{lemma}
	\label{lem: reduction to discrete time}
	For each $i \geq 0$ and $T \geq 0$, we have
	\begin{equation}
	\label{eq: bound for supremum over time of m}
	\norm{\bM_n(i)}_T \leq \max_{m \leq \calR_n(T)} \left|M_n^m(i)\right| + \frac{\lambda_n \Delta_n(T)}{n} + \frac{2}{n}\sup_{t \in [0, T]} \left|\calN_n^a(t) - \lambda_n t\right|,
	\end{equation}
	where $\Delta_n(T)$ is as in Definition \ref{def: separate events sufficiently}. Furthermore, the last two terms on the right-hand side converge in probability to zero as $n \to \infty$.
\end{lemma}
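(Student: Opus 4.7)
The plan is to bound $|\bM_n(t, i)|$ at an arbitrary $t \in [0, T]$ by $|M_n^m(i)|$ for the most recent resampling index $m$ preceding $t$, plus an error controlled by the number of arrivals in the interval $(\sigma_n^m, t]$.

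First, fix $t \in [0, T]$ and let $m \defeq \calR_n(t)$, so that $\sigma_n^m \leq t < \sigma_n^{m+1}$ and $m \leq \calR_n(T)$. By \eqref{eq: decomposition for graph resampled between every two arrivals} and \eqref{eq: general decomposition}, the increment $\bM_n(t,i) - M_n^m(i) = \bM_n(t,i) - \bM_n(\sigma_n^m, i)$ is a sum, over the arrival indices $l$ with $\sigma_n^m < \tau_n^l \leq t$, of terms whose absolute value is at most $1/n$; this is because $I_n^l \in \{0, 1\}$ and $\beta_n \in [0, 1]$. Hence
\begin{equation*}
\left|\bM_n(t, i) - M_n^m(i)\right| \leq \frac{1}{n}\left[\calN_n^a(t) - \calN_n^a\left(\sigma_n^m\right)\right].
\end{equation*}

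Next, by adding and subtracting $\lambda_n t$ and $\lambda_n \sigma_n^m$, the right-hand side admits the estimate
\begin{equation*}
\frac{1}{n}\left[\calN_n^a(t) - \calN_n^a\left(\sigma_n^m\right)\right] \leq \frac{\lambda_n \left(t - \sigma_n^m\right)}{n} + \frac{2}{n}\sup_{s \in [0, T]} \left|\calN_n^a(s) - \lambda_n s\right|.
\end{equation*}
Since $t - \sigma_n^m \leq \Delta_n(T)$ by Definition \ref{def: separate events sufficiently} and $|M_n^m(i)| \leq \max_{k \leq \calR_n(T)} |M_n^k(i)|$, the triangle inequality $|\bM_n(t,i)| \leq |M_n^m(i)| + |\bM_n(t,i) - M_n^m(i)|$, followed by taking the supremum over $t \in [0, T]$, yields \eqref{eq: bound for supremum over time of m}.

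Finally, for the limits, Corollary \ref{cor: limit of delta n t} gives $\Delta_n(T) \Rightarrow 0$ as $n \to \infty$, and together with $\lambda_n / n \to \lambda < \infty$ we conclude that $\lambda_n \Delta_n(T) / n \Rightarrow 0$. For the second term, the functional law of large numbers for the Poisson process yields $\sup_{t \in [0, T]} |\calN^a(s) / s - 1| \to 0$ almost surely as $s \to \infty$, so that
\begin{equation*}
\frac{1}{n}\sup_{t \in [0, T]} \left|\calN_n^a(t) - \lambda_n t\right| = \frac{\lambda_n}{n} \sup_{t \in [0, T]} \left|\frac{\calN^a(\lambda_n t)}{\lambda_n} - t\right| \inprob 0
\end{equation*}
as $n \to \infty$, using again that $\lambda_n / n \to \lambda$ and $\lambda_n \to \infty$.

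The proof is essentially bookkeeping; the only subtlety is to notice that the sum defining $\bM_n(t, i) - \bM_n(\sigma_n^m, i)$ has a number of terms equal to $\calN_n^a(t) - \calN_n^a(\sigma_n^m)$ and each term is uniformly bounded by $1/n$, which decouples the size of the sum from any dependence on the graph dynamics in the window $(\sigma_n^m, t]$.
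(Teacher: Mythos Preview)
Your proof is correct and follows essentially the same approach as the paper: bound the continuous-time supremum by the discrete-time maximum at resampling times plus the increment of $\bM_n$ over the window $(\sigma_n^m, t]$, control that increment by the number of arrivals in the window, and then split $\calN_n^a(t) - \calN_n^a(\sigma_n^m)$ into a deterministic part bounded via $\Delta_n(T)$ and a centered fluctuation handled by the law of large numbers. Your version is in fact marginally more direct than the paper's, which first inserts an intermediate modulus-of-continuity bound before carrying out the same arithmetic.
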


\begin{proof}
For each $i \geq 0$ and $T \geq 0$, we have
\begin{equation*}
\norm{\bM_n(i)}_T \leq \max_{m \leq \calR_n(T)} \left|M_n^m(i)\right| + \sup_{s, t \in [0, T]} \set{\left|\bM_n(t, i) - \bM_n(s, i)\right|}{|t - s| \leq \Delta_n(T)}.
\end{equation*}
We conclude that \eqref{eq: bound for supremum over time of m} holds by noting that if $s, t \in [0, T]$ and $|t - s| \leq \Delta_n(T)$, then
\begin{align*}
\left|\bM_n(t, i) - \bM_n(s, i)\right| \leq \frac{1}{n}\left|\calN_n^a(t) - \calN_n^a(s)\right| &\leq \frac{\lambda_n\left|t - s\right|}{n} + \frac{2}{n}\sup_{u \in [0, T]} \left|\calN_n^a(u) - \lambda_n u\right| \\
&\leq \frac{\lambda_n\Delta_n(T)}{n} + \frac{2}{n}\sup_{u \in [0, T]} \left|\calN_n^a(u) - \lambda_n u\right|.
\end{align*}
The second term on the right-hand side of \eqref{eq: bound for supremum over time of m} converges to zero in probability as $n \to \infty$ by Corollary \ref{cor: limit of delta n t}. Moreover, the third term on the right-hand side of \eqref{eq: bound for supremum over time of m} also converges to zero in probability by the law of large numbers for the Poisson process.
\end{proof}

By the above lemma, we can prove that $\bM_n(i) \Rightarrow 0$ in $D_\R[0, T]$ by showing that the first term of \eqref{eq: bound for supremum over time of m} converges in probability to zero. This is done in the following proposition. First we note that Lemma \ref{lem: martingale} and Doob's maximal inequality imply that it is enough to establish that the second moment of $M_n^{\calR_n(T)}(i)$ vanishes as $n \to \infty$. Then we prove this by noting that the summands in the definition of $\bM_n(i)$ are conditionally independent if they correspond to arrival times that are separated by a resampling time.

\begin{proposition}
	\label{prop: limit of the processes m}
	We have
	\begin{equation*}
	\bM_n(i) \Rightarrow 0 \quad \text{in} \quad D_\R[0, T] \quad \text{as} \quad n \to \infty \quad \text{for all} \quad i \geq 0 \quad \text{and} \quad T \geq 0,
	\end{equation*}
	and in particular $\bM_n \Rightarrow 0$ in $D_{\R^\N}[0, \infty)$ as $n \to \infty$.
\end{proposition}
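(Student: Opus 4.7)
The plan is to invoke Lemma \ref{lem: reduction to discrete time} to reduce the claim to showing that $\max_{m \leq \calR_n(T)} |M_n^m(i)|$ converges in probability to zero for each fixed $i \geq 0$ and $T \geq 0$; once this is in place, Lemma \ref{lem: convergence in probability from components} yields the conclusion that $\bM_n \Rightarrow 0$ in $D_{\R^\N}[0, \infty)$. The main tool will be Doob's $L^2$ maximal inequality applied to the discrete-time martingale $\{M_n^m(i)\}_{m \geq 0}$ supplied by Lemma \ref{lem: martingale}.

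A first technicality is that $\calR_n(T)$ is not in general a stopping time with respect to $\{\calF_n^m\}$: knowing the history of the system up to time $\sigma_n^m$ does not reveal whether $\sigma_n^{m+1}$ falls before or after $T$. I would fix this by enlarging the filtration to $\calG_n^m \defeq \calF_n^m \vee \sigma(\calN_n^a, \calR_n)$, adjoining the arrival- and resampling-time processes as side information. The argument in the proof of Lemma \ref{lem: martingale} carries over verbatim, since the graph $G_n^m$ governing dispatching on $(\sigma_n^m, \sigma_n^{m+1}]$ remains independent of $\calG_n^m$; hence, by \eqref{eq: expectation of assigning indicators}, $E[M_n^{m+1}(i) - M_n^m(i) \mid \calG_n^m] = 0$ and $\{M_n^m(i)\}$ is a $\{\calG_n^m\}$-martingale. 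With this enlargement, $\{\calR_n(T) \leq m\} = \{\sigma_n^{m+1} > T\} \in \sigma(\calR_n) \subseteq \calG_n^m$, so $\calR_n(T)$ is a $\{\calG_n^m\}$-stopping time.

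Doob's inequality applied to the stopped martingale, together with the orthogonality identity for martingales at a stopping time (justified by truncating at $\calR_n(T) \wedge k$, applying the identity at the bounded stopping time, and sending $k \to \infty$ via monotone convergence), yields
\[
E\left[\max_{m \leq \calR_n(T)} |M_n^m(i)|^2\right] \leq 4\, E\left[\sum_{m=0}^{\calR_n(T) - 1} \bigl(M_n^{m+1}(i) - M_n^m(i)\bigr)^2\right].
\]
Each increment is a $1/n$-scaled sum of at most $A_n^{m+1}$ summands, each bounded by $1$ in absolute value, so $(M_n^{m+1}(i) - M_n^m(i))^2 \leq (A_n^{m+1})^2/n^2$.

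It remains to show that the right-hand side vanishes with $n$. For indices where $\calR_n$ does not separate arrivals fully, $\sum_m (A_n^m)^2/n^2$ is dominated by $\Sigma_n(T)$, so the pseudo-separation property \eqref{eq: event separation property} closes the bound. For the complementary indices, $A_n^m \in \{0,1\}$, whence $(A_n^m)^2 = A_n^m$ and $\sum_m A_n^m \leq \calN_n^a(T)$, with expectation $\lambda_n T/n^2 \to 0$ by \eqref{eq: limiting degree distribution condition}. Markov's inequality then delivers the desired convergence in probability. I expect the main obstacle to be the careful verification of the martingale identities under the enlarged filtration and the justification of $L^2$-orthogonality at the random stopping time $\calR_n(T)$; the remaining steps are routine bookkeeping on martingale-increment bounds and a direct appeal to the pseudo-separation hypothesis.
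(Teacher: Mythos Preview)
Your proposal is correct and follows essentially the same approach as the paper: reduce via Lemma \ref{lem: reduction to discrete time}, apply Doob's maximal inequality to the discrete-time martingale of Lemma \ref{lem: martingale}, and bound the resulting second moment by $n^{-2}\sum_m (A_n^m)^2$, which is controlled by $E[\Sigma_n(T)]$ in the non-fully-separating case and by $\lambda_n T/n^2$ otherwise. The only technical difference is in how the random horizon $\calR_n(T)$ is handled: the paper conditions on $\calR_n(T)$ (arguing that $\{M_n^m(i)\}$ remains a martingale given $\calR_n(T)$) and then applies Doob at the deterministic time $\calR_n(T)$, whereas you enlarge the filtration to $\calG_n^m = \calF_n^m \vee \sigma(\calN_n^a,\calR_n)$ so that $\calR_n(T)$ becomes a genuine stopping time and optional stopping applies; both devices are standard and lead to the same bound.
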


\begin{proof}
Fix $i \geq 0$ and $T \geq 0$. By Lemma \ref{lem: reduction to discrete time}, it suffices to prove that
\begin{equation*}
\lim_{n \to \infty} P\left(\max_{m \leq \calR_n(T)} \left|M_n^m(i)\right| \geq \varepsilon\right) = 0 \quad \text{for all} \quad \varepsilon > 0.	
\end{equation*}
Using the same arguments as in the proof of Lemma \ref{lem: martingale}, we may establish that
\begin{equation*}
\expect*{M_n^{m + 1}(i) - M_n^m(i) | \calR_n(T), \calF_n^m} = 0 \quad \text{for all} \quad m \geq 0.
\end{equation*}
This means that $\set{M_n^m(i)}{m \geq 0}$ is a martingale also when $\calR_n(T)$ is given. If we fix some arbitrary $\varepsilon > 0$, then it follows from Doob's maximal inequality that
\begin{align*}
P\left(\max_{m \leq \calR_n(T)} \left|M_n^m(i)\right| \geq \varepsilon\right) &= E\left[\cprob*{\max_{m \leq \calR_n(T)} \left|M_n^m(i)\right| \geq \varepsilon | \calR_n(T)}\right] \\
&\leq E\left[\frac{\expect*{\left|M_n^{\calR_n(T)}(i)\right|^2 | \calR_n(T)}}{\varepsilon^2}\right] = \frac{E\left|M_n^{\calR_n(T)}(i)\right|^2}{\varepsilon^2}.
\end{align*}

In order to prove the proposition, it is enough to show that the right-hand side of the above equation goes to zero as $n \to \infty$. Suppose first that the resampling process $\calR_n$ does not separate arrivals fully and thus $\bM_n$ is given by \eqref{eq: general decomposition}. Also, let
\begin{equation*}
Y_n^m \defeq I_n^m\left(\bar{X}_n^m, i\right) - \beta_n\left(\bar{q}_n^m(i)\right) \quad \text{and} \quad \calG_n \defeq \sigma \sett{\calN_n^a(t), \calR_n(t)}{t \geq 0}.
\end{equation*}
In addition, define $m_n(T) \defeq \max \set{m \geq 1}{\tau_n^m \leq \sigma_n^{\calR_n(T)}}$ and note that
\begin{equation*}
E\left|M_n^{\calR_n(T)}(i)\right|^2 = \frac{1}{n^2}E \left[\sum_{l, m = 1}^{m_n(T)} Y_n^l Y_n^m\right] = \frac{1}{n^2}E \left[\sum_{l, m = 1}^{m_n(T)} \expect*{Y_n^l Y_n^m | \calG_n}\right].
\end{equation*}

If $\tau_n^l \leq \sigma_n^k < \tau_n^m$ for some $k \geq 1$, then
\begin{equation}
\label{aux: zero covariance}
\begin{split}
\expect*{Y_n^l Y_n^m | \calG_n} &= \expect*{\expect*{Y_n^l Y_n^m | \bar{X}_n^m, \calG_n} | \calG_n} \\
&= \expect*{\expect*{Y_n^l | \bar{X}_n^m, \calG_n} \expect*{Y_n^m | \bar{X}_n^m, \calG_n} | \calG_n} \\
&= \expect*{\expect*{Y_n^l | \bar{X}_n^m, \calG_n} \expect*{Y_n^m | \bar{X}_n^m} | \calG_n} = 0.
\end{split}
\end{equation}
For the second equality, note that $Y_n^m$ can be expressed as a function of $\bar{X}_n^m$, the graph at $\tau_n^m$ and a random variable $u_n^m$ that selects a server uniformly at random and independently of everything else. The graph at $\tau_n^m$ is independent of $Y_n^l$, and also of $\calG_n$, since $\tau_n^l \leq \sigma_n^k < \tau_n^m$, which yields the second equality; the graph is resampled right after $\sigma_n^k$, thus the graph used to dispatch the task that arrives at time $\tau_n^m$ is different from the one used at time $\tau_n^l$, and independent of $Y_n^l$. The fourth identity holds because $\expect*{Y_n^m | \bar{X}_n^m} = 0$ by \eqref{eq: expectation of assigning indicators}.

For the second equality observe that $Y_n^m$ is a function of $\bar{X}_n^m$ and the graph at $\tau_n^m$. This graph is independent of $Y_n^l$, and also of $\calG_n$, since $\tau_n^l \leq \sigma_n^k < \tau_n^m$ , which yields the second equality; the graph is resampled right after $\sigma_n^k$, thus the graph used to dispatch the task that arrives at time $\tau_n^m$ is different from the one used at time $\tau_n^l$, and independent of $Y_n^l$. The fourth identity holds because $\expect*{Y_n^m | \bar{X}_n^m} = 0$ by \eqref{eq: expectation of assigning indicators}.

Consider the sets
\begin{equation*}
L_n^m \defeq \set{l \geq 1}{\sigma_n^{m - 1} < \tau_n^l \leq \sigma_n^m},
\end{equation*}
and let $A_n^m = \left|L_n^m\right|$ denote the total number of arrivals in the interval $\left(\sigma_n^{m - 1}, \sigma_n^m\right]$. Since $\left|Y_n^m\right| \leq 1$ for all $m \geq 1$, it follows from \eqref{aux: zero covariance} that
\begin{equation*}
E\left|M_n^{\calR_n(T)}(i)\right|^2 = \frac{1}{n^2} E\left[\sum_{m = 1}^{\calR_n(T)} \sum_{k, l \in L_n^m} \expect*{Y_n^k Y_n^l | \calG_n}\right] \leq \frac{1}{n^2} E \left[\sum_{m = 1}^{\calR_n(T)} \left(A_n^m\right)^2\right] \leq E\left[\Sigma_n(T)\right].
\end{equation*}
If there exist infinitely many indexes $n$ such that $\calR_n$ does not separate arrivals fully, then \eqref{eq: event separation property} implies that the right-hand side vanishes as $n \to \infty$ within this set of indexes.

Finally, suppose that $\calR_n$ separates arrivals fully. In this case $\bM_n$ is defined by \eqref{eq: decomposition for graph resampled between every two arrivals}, so we must set $Y_n^m \defeq I_n^m\left(X_n^m, i\right) - \beta_n\left(q_n^m(i)\right)$. Because the graph is resampled between any two consecutive arrivals, \eqref{aux: zero covariance} holds for all $l \neq m$. Hence,
\begin{equation*}
E\left|M_n^{\calR_n(T)}(i)\right|^2 \leq \frac{1}{n^2}E\left[\sum_{m = 1}^{m_n(T)} \left(Y_n^m\right)^2\right] \leq\frac{E \left[m_n(T)\right]}{n^2} \leq \frac{E\left[\calN_n^a(T)\right]}{n^2} = \frac{\lambda_n T}{n^2}.
\end{equation*}
Since the right-hand side goes to zero as $n \to \infty$, this completes the proof.
\end{proof}

The following result is a corollary of Propositions \ref{prop: limit of the processes l} and \ref{prop: limit of the processes m}.

\begin{corollary}
	\label{cor: vanishing processes converge to zero.}
	We have $\bv_n \Rightarrow 0$ in $D_{\R^\N}[0, \infty)$ as $n \to \infty$.
\end{corollary}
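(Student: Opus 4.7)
The plan is to argue componentwise and then invoke the criterion in Lemma \ref{lem: convergence in probability from components}. Since $\bv_n$ takes values in $\R^\N$, it suffices to show that $\bv_n(i) \Rightarrow 0$ in $D_\R[0,T]$ for every fixed $i \geq 0$ and $T \geq 0$. Inspecting the definition \eqref{eq: vanishing process}, the coordinate $\bv_n(t,i)$ is a sum of five pieces: the four martingale-type terms $\bL_n(t,i-1)$, $-\bL_n(t,i)$, $\bM_n(t,i-1)$, $-\bM_n(t,i)$, plus the compensated departure term
\begin{equation*}
E_n(t,i) \defeq \int_0^t \left[\bq_n(s, i) - \bq_n(s, i + 1)\right]ds - \frac{1}{n}\calN_i^d\!\left(n\int_0^t\left[\bq_n(s, i) - \bq_n(s, i + 1)\right]ds\right).
\end{equation*}

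First I would dispose of the four martingale-type terms. By Proposition \ref{prop: limit of the processes l} we have $\bL_n(j) \Rightarrow 0$ in $D_\R[0,T]$ as $n \to \infty$ for all $j \geq 0$, and by Proposition \ref{prop: limit of the processes m} the same holds for $\bM_n(j)$. In particular the supremum norm of each of these four processes over $[0,T]$ converges to zero in probability, so the triangle inequality gives $\norm{\bL_n(i-1) - \bL_n(i) + \bM_n(i-1) - \bM_n(i)}_T \Rightarrow 0$.

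Next I would handle $E_n(i)$ using the functional law of large numbers for the Poisson process. Since $\bq_n$ takes values in $Q$, the integrand satisfies $0 \leq \bq_n(s,i) - \bq_n(s,i+1) \leq 1$, so the (random) argument $n \int_0^t [\bq_n(s,i) - \bq_n(s,i+1)] ds$ lies in $[0, nT]$ for every $t \in [0,T]$. Writing $u$ for this argument divided by $n$, we get the pointwise bound
\begin{equation*}
\sup_{t \in [0,T]} |E_n(t,i)| \leq \sup_{u \in [0, T]} \left| u - \frac{1}{n} \calN_i^d(nu) \right|,
\end{equation*}
and the right-hand side tends to zero in probability by the standard functional law of large numbers for the unit-rate Poisson process $\calN_i^d$. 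Therefore $\norm{E_n(i)}_T \Rightarrow 0$.

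Combining the two steps and applying the triangle inequality, $\norm{\bv_n(i)}_T \Rightarrow 0$ for every $i \geq 0$ and $T \geq 0$, and Lemma \ref{lem: convergence in probability from components} upgrades this to $\bv_n \Rightarrow 0$ in $D_{\R^\N}[0, \infty)$. None of the steps is really an obstacle: the substantive work has already been done in Propositions \ref{prop: limit of the processes l} and \ref{prop: limit of the processes m}, and the only remaining ingredient is the classical functional LLN for the Poisson process, applied on a compact range that is guaranteed by the uniform bound on $\bq_n(s,i) - \bq_n(s,i+1)$.
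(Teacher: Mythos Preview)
Your proposal is correct and follows essentially the same approach as the paper: invoke Propositions \ref{prop: limit of the processes l} and \ref{prop: limit of the processes m} for the $\bL_n$ and $\bM_n$ terms, apply the functional law of large numbers for the Poisson process to the compensated departure term, and then use Lemma \ref{lem: convergence in probability from components} to pass from componentwise convergence to convergence in $D_{\R^\N}[0,\infty)$. The only extra detail you add is the explicit observation that the random time-change lies in $[0,nT]$, which justifies the uniform LLN bound; the paper leaves this implicit.
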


\begin{proof}
Fix $i \geq 1$ and $T \geq 0$. By the law of large numbers for the Poisson process,
\begin{equation*}
t \mapsto \int_0^t \left[\bq_n(s, i) - \bq_n(s, i + 1)\right]ds - \frac{1}{n}\calN_i^d\left(n\int_0^t\left[\bq_n(s, i) - \bq_n(s, i + 1)\right]ds\right)
\end{equation*}
converges in probability to zero in $D_\R[0, T]$. It follows from Propositions \ref{prop: limit of the processes l} and \ref{prop: limit of the processes m} that $\bv_n(i) \Rightarrow 0$ in $D_\R[0, T]$. By Lemma \ref{lem: convergence in probability from components}, this completes the proof.
\end{proof} 

\subsection{Drift processes}
\label{sub: drift processes}

The following proposition is proved in Appendix \ref{app: tightness of occupancy processes}.

\begin{proposition}
	\label{prop: tightness}
	If $\set{\bq_n(0)}{n \geq 1}$ is tight in $\ell_1$, then each subsequence of $\set{\bq_n}{n \geq 1}$ has a further subsequence that converges weakly in $D_{\ell_1}[0, \infty)$. Furthermore, the weak limit of every convergent subsequence is a process that is almost surely continuous.
\end{proposition}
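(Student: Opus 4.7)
The plan is to prove tightness of $\{\bq_n\}$ in $D_{\ell_1}[0,\infty)$ in two stages: first establish componentwise tightness in $D_{\R^\N}[0,\infty)$ with continuous subsequential limits, then upgrade to tightness in $D_{\ell_1}[0,\infty)$ through a uniform-in-time control on the occupancy tails obtained by coupling with parallel M/M/1 queues.

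For the first stage, I would fix $i \geq 0$ and $T \geq 0$ and work from the decomposition $\bq_n = \bq_n(0) + \bv_n + \bw_n$ in \eqref{eq: decomposition of stochastic equations}. The coordinate $\bq_n(0,i)$ is tight since it lies in $[0,1]$, and $\bv_n(i) \Rightarrow 0$ in $D_\R[0,T]$ by Corollary \ref{cor: vanishing processes converge to zero.} together with Lemma \ref{lem: convergence in probability from components}. For the drift $\bw_n(i)$ given in \eqref{eq: drift process}, the last term is Lipschitz in $t$ with constant at most $1$, and $\bu_n(\cdot, i)$ has jumps of size at most $1/n$ which can occur only at arrival times; hence the increments of $\bw_n(i)$ over any subinterval $(s,t] \subset [0,T]$ are dominated by $(t-s) + n^{-1}[\calN_n^a(t) - \calN_n^a(s)]$, and the functional law of large numbers for $\calN_n^a$ yields asymptotic equicontinuity. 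This gives tightness of $\{\bw_n(i)\}$ in $D_\R[0,T]$ with subsequential limits in $C_\R[0,T]$. Diagonalizing over $i$ and $T$ yields tightness of $\{\bq_n\}$ in $D_{\R^\N}[0,\infty)$ with almost surely continuous subsequential limits.

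The main step is then the uniform tail control
\begin{equation*}
\lim_{M \to \infty} \limsup_{n \to \infty} P\left(\sup_{t \in [0,T]} \sum_{i > M} \bq_n(t, i) > \varepsilon\right) = 0 \quad \text{for all} \quad \varepsilon > 0,\ T \geq 0.
\end{equation*}
For this I would invoke the label-sorted coupling of \cite[Proposition 2.1]{mukherjee2018asymptotically} used in the proof of Proposition \ref{prop: ergodicity}, with a parallel system $\bY_n$ of $n$ independent M/M/1 queues of arrival rate $\rho_n = \lambda_n / n$ initialized so that $\bY_n(0) = \bX_n(0)$. This coupling yields the pointwise domination $\sum_{i > M} \bq_n(t, i) \leq \sum_{i > M} \br_n(t, i)$ for all $t \geq 0$ and $M \geq 0$. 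Combining the trivial pathwise bound $\bY_n(t, u) \leq \bY_n(0, u) + \calA_n^u(t)$, where $\calA_n^u$ is the Poisson arrival process into server $u$ of the parallel system, with the elementary inequality $[a + b - M]^+ \leq [a - M/2]^+ + [b - M/2]^+$ and the monotonicity in $t$ of the resulting bound, I obtain
\begin{equation*}
\sup_{t \in [0,T]} \sum_{i > M} \bq_n(t, i) \leq \sum_{i > M/2} \bq_n(0, i) + \frac{1}{n}\sum_{u = 1}^n \left[\calA_n^u(T) - M/2\right]^+.
\end{equation*}
The first term vanishes in probability as $M \to \infty$ uniformly in $n$ by the tightness of $\{\bq_n(0)\}$ in $\ell_1$. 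The second is a sample average of i.i.d.\ random variables with common mean $E\left[[\mathrm{Poisson}(\rho_n T) - M/2]^+\right]$, which tends to zero as $M \to \infty$ uniformly in $n$ because $\rho_n \to \lambda$; Markov's inequality then delivers the required uniform control.

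Combining the two stages through the $\ell_1$-tightness characterization in Lemma \ref{lem: tightness in l1} of Appendix \ref{app: tightness of occupancy processes} (i.e., \cite[Lemma 2]{mukherjee2018universality}), applied uniformly in $t \in [0, T]$, yields tightness of $\{\bq_n\}$ in $D_{\ell_1}[0,\infty)$ with the uniform-on-compacts topology. Almost sure continuity of subsequential limits in $\ell_1$ follows from the componentwise continuity established in stage one together with the uniform tail control, as $\bq_n$ has jumps of size $1/n$ that vanish in the limit. The principal obstacle is the uniform-in-$t$ tail bound: the crucial point is to dominate the M/M/1 queue lengths by the quantities $\bY_n(0,u) + \calA_n^u(t)$, which are nondecreasing in $t$, so that the supremum over $t \in [0,T]$ can be evaluated at $T$ and an i.i.d.\ concentration argument becomes available.
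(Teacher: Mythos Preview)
Your proposal is correct but follows a genuinely different route from the paper. The paper proves tightness in $D_{\ell_1}[0,\infty)$ directly via the two standard conditions: (a) tightness of $\{\bq_n(t)\}$ in $\ell_1$ for each fixed $t$, obtained by a self-contained Chernoff-type argument (Lemma~\ref{lem: tightness of qn(t)}) that does \emph{not} use the M/M/1 coupling; and (b) control of the $\ell_1$-modulus of continuity $w_T(\bq_n,h)$ by the single observation that every arrival or potential departure changes exactly one coordinate of $\bq_n$ by $1/n$, so $\norm{\bq_n(t)-\bq_n(s)}_1 \leq n^{-1}[\calN_n(t)-\calN_n(s)]$ for a Poisson process $\calN_n$ of rate $\lambda_n+n$, after which Doob's inequality finishes the job. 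This second step is a one-line bound on the full $\ell_1$-norm and bypasses both the decomposition $\bq_n(0)+\bv_n+\bw_n$ and any coordinate-by-coordinate argument. By contrast, your stage one establishes only componentwise equicontinuity and, through the use of $\bv_n\Rightarrow 0$, imports the pseudo-separation hypothesis into the tightness proof, whereas the paper's argument is independent of Assumption~\ref{ass: fluid limit conditions}. On the other hand, your stage two---the uniform-in-$t$ tail control via the M/M/1 coupling and the monotone bound $\bY_n(t,u)\leq \bY_n(0,u)+\calA_n^u(T)$---is a clean alternative to the paper's fixed-$t$ argument, yields a strictly stronger intermediate estimate, and makes the combination with componentwise tightness straightforward (after passing to the Skorohod topology for separability, as in Remark~\ref{rem: separability}). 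In short: the paper buys simplicity and self-containment by bounding the $\ell_1$-modulus directly; you buy a stronger tail estimate at the cost of a more elaborate assembly and an avoidable dependence on Corollary~\ref{cor: vanishing processes converge to zero.}.
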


By assumption, $\set{\bq_n(0)}{n \geq 1}$ is tight in $\ell_1$, so every increasing sequence of natural numbers has a subsequence $\calK$ such that $\set{\bq_k}{k \in \calK}$ converges weakly in $D_{\ell_1}[0, \infty)$ to a process $\bq$ that is almost surely continuous. Let us fix the subsequence $\calK$ and the limit $\bq$. It remains to prove that $\bq$ satisfies \eqref{eq: fluid dynamics} with probability one.

\subsubsection{Characterization of a subsequential limit}
\label{subsub: characterization of a subsequential limit}

Let $S_{\ell_1}[0, \infty)$ and $S_{\R^\N}[0, \infty)$ denote the spaces $D_{\ell_1}[0, \infty)$ and $D_{\R^\N}[0, \infty)$, respectively, when they are equipped with the Skorohod $J_1$-topology instead of the uniform topology. By Corollary \ref{cor: vanishing processes converge to zero.} and \cite[Theorem 23.9]{kallenberg2021foundations},
\begin{equation}
\label{eq: weak limits in skorohod topology 1}
\bq_k \Rightarrow \bq \quad \text{in} \quad S_{\ell_1}[0, \infty) \quad \text{and} \quad \bv_k \Rightarrow 0 \quad \text{in} \quad S_{\R^\N}[0, \infty) \quad \text{as} \quad k \to \infty.
\end{equation}	
Indeed, the limits hold with respect to the uniform topology and the limiting processes are almost surely continuous. In addition, the law of large numbers for the Poisson process and Corollary \ref{cor: limit of delta n t} imply that the stochastic processes
\begin{equation}
\label{eq: weak limits in skorohod topology 2}
t \mapsto \frac{\calN_k^a(t)}{\lambda_k} - t \quad \text{and} \quad t \mapsto \Delta_k(t)
\end{equation}
converge weakly to zero as $k \to \infty$ in the uniform topology, and thus also in the Skorohod $J_1$-topology. The next lemma will be combined with Skorohod's representation theorem to construct $\bq$ and the processes $\left(\calN_k^a, \calR_k, \bq_k, \bv_k\right)$ on a common probability space where the above limits hold almost surely, which considerably simplifies the characterization of the subsequential limit $\bq$. The proof of the lemma is provided in Appendix \ref{app: auxiliary results}.

\begin{remark}
	\label{rem: separability}
	Suppose that $X_1$ and $X_2$ are random variables with values in separable metric spaces $S_1$ and $S_2$, respectively. Separability ensures that $(X_1, X_2)$ is a measurable function with values in the product space $S_1 \times S_2$, endowed with the product topology and the Borel $\sigma$-algebra; we refer to \cite[Appendix M10]{billingsley1999convergence}. This property is implicitly used in the statement of the following lemma. For this reason, we briefly switch from the uniform topologies to Skorohod $J_1$-topologies, which are separable. By \cite[Theorem 23.9]{kallenberg2021foundations}, limits with respect to these two topologies are equivalent if the limiting process is almost surely continuous.
\end{remark}

\begin{lemma}
	\label{lem: joint convergence in distribution}
	Consider separable metric spaces $S_1, \dots, S_m$ and define $\Pi \defeq S_1 \times \dots \times S_m$ with the product topology. Let $X_1$ be a random variable with values in $S_1$ and suppose that $x_j \in S_j$ is a constant for each $j = 2, \dots, m$. In addition, consider random variables $X_k^j$ with values in $S_j$ for each $j = 1, \dots, m$ and each $k \in \calK$. If
	\begin{equation*}
	X_k^1 \Rightarrow X_1 \quad \text{in} \quad S_1 \quad \text{and} \quad X_k^j \Rightarrow x_j \quad \text{in} \quad S_j \quad \text{for all} \quad j = 2, \dots, m \quad \text{as} \quad k \to \infty,
	\end{equation*}
	then $\left(X_k^1, X_k^2, \dots, X_k^m\right) \Rightarrow \left(X_1, x_2, \dots, x_m\right)$ in $\Pi$ as $k \to \infty$.
\end{lemma}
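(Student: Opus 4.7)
The plan is to deduce this from the continuous mapping theorem together with a Slutsky-type argument carried out in the product space. First I would recall the general fact, already invoked in Section \ref{sub: basic notation}, that weak convergence to a deterministic limit is equivalent to convergence in probability. Applying this to each component $j = 2, \dots, m$, the hypothesis $X_k^j \Rightarrow x_j$ upgrades to $X_k^j \inprob x_j$ in $S_j$.

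Next I would fix, for each $j$, a metric $d_j$ on $S_j$ compatible with its topology and endow $\Pi$ with the compatible product metric
\begin{equation*}
d_\Pi(y, z) \defeq \sum_{j = 1}^m \frac{\min\{d_j(y_j, z_j), 1\}}{2^j}.
\end{equation*}
The separability assumption on each $S_j$ ensures that $\Pi$ is separable and that its Borel $\sigma$-algebra coincides with the product $\sigma$-algebra, so $\left(X_k^1, \dots, X_k^m\right)$ is a bona fide $\Pi$-valued random element (this is the measurability point flagged in Remark \ref{rem: separability}).

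The map $\map{\iota}{S_1}{\Pi}$ defined by $\iota(y) \defeq (y, x_2, \dots, x_m)$ is continuous, so the continuous mapping theorem applied to $X_k^1 \Rightarrow X_1$ yields
\begin{equation*}
(X_k^1, x_2, \dots, x_m) \Rightarrow (X_1, x_2, \dots, x_m) \quad \text{in} \quad \Pi \quad \text{as} \quad k \to \infty.
\end{equation*}
Comparing this with the sequence of interest gives
\begin{equation*}
d_\Pi\bigl((X_k^1, X_k^2, \dots, X_k^m), (X_k^1, x_2, \dots, x_m)\bigr) \leq \sum_{j = 2}^m \frac{\min\{d_j(X_k^j, x_j), 1\}}{2^j},
\end{equation*}
and the right-hand side converges to zero in probability because each term does.

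To conclude, I would invoke the standard Slutsky principle valid in any metric space: if $Y_k \Rightarrow Y$ and $d_\Pi(Y_k, Z_k) \inprob 0$, then $Z_k \Rightarrow Y$. This is a direct consequence of the Portmanteau theorem, since for any bounded uniformly continuous $\map{f}{\Pi}{\R}$ the difference $|f(Y_k) - f(Z_k)|$ vanishes in probability and hence in expectation. Applying this with $Y_k \defeq (X_k^1, x_2, \dots, x_m)$ and $Z_k \defeq (X_k^1, X_k^2, \dots, X_k^m)$ gives the desired joint weak convergence. I do not expect any serious obstacle: the only delicate point is the measurability issue in the product space, which is precisely what the separability hypothesis is designed to handle.
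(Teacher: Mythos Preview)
Your proposal is correct and follows essentially the same approach as the paper: first establish $(X_k^1, x_2, \dots, x_m) \Rightarrow (X_1, x_2, \dots, x_m)$ via the continuous mapping theorem, then show the distance in $\Pi$ between this auxiliary sequence and $(X_k^1, X_k^2, \dots, X_k^m)$ tends to zero in probability, and finally invoke the Slutsky-type principle (which the paper cites as \cite[Theorem 3.1]{billingsley1999convergence}). The only cosmetic difference is your choice of the weighted-sum product metric instead of the paper's max metric.
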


If Assumption \ref{ass: fluid limit conditions} holds, then Lemma \ref{lem: joint convergence in distribution} implies that the process
\begin{equation}
\label{eq: four tuple}
t \mapsto \left(\frac{\calN_k^a(t)}{\lambda_k} - t, \Delta_k(t), \bq_k(t), \bv_k(t)\right)
\end{equation}
converges weakly to $(0, 0, \bq, 0)$ in the product topology as $k \to \infty$. Hence, it follows from Skorohod's representation theorem that the processes $\set{\left(\calN_k^a, \calR_k, \bq_k, \bv_k\right)}{k \in \calK}$ and $\bq$ can be defined on a common probability space $(\Omega, \calF, \prob)$ where the limit holds with probability one and not just in distribution. In addition, \cite[Theorem 23.9]{kallenberg2021foundations} implies that Skorohod's $J_1$-topology can be replaced by the uniform topology in the limits, because the limiting processes are almost surely continuous. Namely,
\begin{subequations}
\label{eq: strong limits}
\begin{align}
&\lim_{k \to \infty} \bq_k = \bq \quad \text{in} \quad D_{\ell_1}[0, \infty), \label{seq1: strong limit of q} \\
&\lim_{k \to \infty} \bv_k = 0 \quad \text{in} \quad D_{\R^\N}[0, \infty), \label{seq2: strong limit of v}\\
&\lim_{k \to \infty} \sup_{t \in [0, T]} \left|\frac{\calN_k^a(t)}{\lambda_k} - t\right| = 0 \quad \text{for all} \quad T \geq 0, \label{seq3: strong limit of n}\\
&\lim_{k \to \infty} \Delta_k(T) = 0 \quad \text{for all} \quad T \geq 0, \label{seq4: strong limit of r}
\end{align}
\end{subequations}
almost surely. Moreover, \eqref{eq: decomposition of stochastic equations} and \eqref{eq: drift process} imply that
\begin{equation}
\label{eq: functional equation}
\begin{split}
\bq_k(t, i) &= \bq_k(0, i) + \bu_k(t, i - 1) - \bu_k(t, i) \\
&- \int_0^t \left[\bq_k(s, i) - \bq_k(s, i + 1)\right]ds + \bv_k(t, i) \quad \text{for all} \quad i \geq 1 \quad \text{and} \quad t \geq 0
\end{split}
\end{equation}
almost surely. Recall that $\bu_k(t, i)$ is defined by \eqref{eq: decomposition for graph resampled between every two arrivals} when $\calR_k$ separates arrivals fully, and $\bu_k(t, i)$ is defined by \eqref{eq: general decomposition} otherwise.

\begin{remark}
	\label{rem: measurability}
	Suppose that $\set{X_n}{n \geq 1}$ and $X$ are random variables with values in a common separable metric space, such that $X_n \Rightarrow X$ as $n \to \infty$. Skorohod's representation theorem states that versions of all these random variables (i.e., with the same laws) can be constructed on a common probability space so that the limit holds almost surely. The right-hand side of \eqref{eq: functional equation} is a measurable function of $\left(\calN_k^a, \calR_k, \bq_k, \bv_k\right)$; see Appendix \ref{app: auxiliary results} for more details. This implies that the probability that \eqref{eq: functional equation} holds only depends on the law of $\left(\calN_k^a, \calR_k, \bq_k, \bv_k\right)$, thus \eqref{eq: functional equation} holds with probability one in $(\Omega, \calF, \prob)$ by \eqref{eq: decomposition of stochastic equations} and \eqref{eq: drift process}.
\end{remark}

The following lemma says that the functions $\bu_k$ converge uniformly over compact sets.

\begin{lemma}
	\label{lem: limit of u}
	Fix $\omega \in \Omega$ in the set of probability one where \eqref{eq: strong limits} and \eqref{eq: functional equation} hold for all $k \in \calK$. There exists a function $\map{\bu(\omega)}{[0, \infty)}{\R^\N}$ such that
	\begin{equation}
	\label{eq: uniform convergence of u k}
	\lim_{k \to \infty} \sup_{t \in [0, T]} \left|\bu_k(\omega, t, i) - \bu(\omega, t, i)\right| = 0 \quad \text{for all} \quad i \geq 0 \quad \text{and} \quad T \geq 0.
	\end{equation}
	Moreover, $\bu(\omega, t, 0) = \lambda t$ for all $t \geq 0$.
\end{lemma}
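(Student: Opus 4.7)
The plan is to establish \eqref{eq: uniform convergence of u k} by induction on $i \geq 0$, using the functional equation \eqref{eq: functional equation} rearranged to express $\bu_k(\omega, t, i)$ in terms of $\bu_k(\omega, t, i - 1)$ and quantities that already converge uniformly on compact subsets of $[0, \infty)$ by \eqref{eq: strong limits}.

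For the base case $i = 0$, I would first observe that $\bX_n(t, u) \geq 0$ for every server $u$, so $\bq_n(t, 0) \equiv 1$ and therefore $q_n^m(0) = \bar{q}_n^m(0) = 1$ for all $m \geq 1$. Because the graph is simple, $p_n(d) = 0$ for $d \geq n$, and $\alpha_n(d + 1, 1) = 1$ for $d \leq n - 1$, so $\beta_n(1) = \sum_{d = 0}^{n - 1} p_n(d) = 1$. Substituting into \eqref{eq: decomposition for graph resampled between every two arrivals} or \eqref{eq: general decomposition} then reduces $\bu_k(\omega, t, 0)$ to $\calN_k^a(\omega, t) / k$, regardless of whether $\calR_k$ separates arrivals fully. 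Writing
\begin{equation*}
\frac{\calN_k^a(\omega, t)}{k} - \lambda t = \frac{\lambda_k}{k}\left(\frac{\calN_k^a(\omega, t)}{\lambda_k} - t\right) + \left(\frac{\lambda_k}{k} - \lambda\right) t
\end{equation*}
and invoking \eqref{seq3: strong limit of n} together with $\lambda_k / k \to \lambda$ from Assumption \ref{ass: fluid limit conditions}, one concludes that $\bu_k(\omega, \cdot, 0)$ converges to the function $t \mapsto \lambda t$ uniformly on each $[0, T]$. This simultaneously establishes \eqref{eq: uniform convergence of u k} at $i = 0$ and the identity $\bu(\omega, t, 0) = \lambda t$.

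For the inductive step, assuming \eqref{eq: uniform convergence of u k} at level $i - 1 \geq 0$, I would rearrange \eqref{eq: functional equation} as
\begin{equation*}
\bu_k(\omega, t, i) = \bu_k(\omega, t, i - 1) + \bq_k(\omega, 0, i) - \bq_k(\omega, t, i) - \int_0^t \left[\bq_k(\omega, s, i) - \bq_k(\omega, s, i + 1)\right] ds + \bv_k(\omega, t, i).
\end{equation*}
The first term converges uniformly on $[0, T]$ by the induction hypothesis. Because $|\bq_k(\omega, s, j) - \bq(\omega, s, j)| \leq \norm{\bq_k(\omega, s) - \bq(\omega, s)}_1$, hypothesis \eqref{seq1: strong limit of q} delivers coordinatewise uniform convergence $\bq_k(\omega, \cdot, j) \to \bq(\omega, \cdot, j)$ on $[0, T]$ for every $j$, which controls the second and third terms and, by uniform boundedness, also the integral term. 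Finally, $\bv_k(\omega, \cdot, i) \to 0$ uniformly on $[0, T]$ follows from \eqref{seq2: strong limit of v}, since uniform convergence in the product metric on $\R^\N$ is equivalent to coordinatewise uniform convergence. Defining $\bu(\omega, t, i)$ as the resulting pointwise limit closes the induction.

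I do not anticipate any serious obstacle: the crucial non-trivial input is the identity $\beta_n(1) = 1$, which rests on the graph being simple (outdegrees bounded by $n - 1$) and reduces the base case to the scaled arrival process; the rest is bookkeeping atop the telescoping recursion supplied by \eqref{eq: functional equation}, combined with the almost sure uniform limits already arranged in \eqref{eq: strong limits}.
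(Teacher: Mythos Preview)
Your proposal is correct and follows essentially the same inductive strategy as the paper: establish the base case $\bu_k(\cdot,0)=\calN_k^a(\cdot)/k\to\lambda\,\cdot$ via \eqref{seq3: strong limit of n}, then propagate uniform convergence from level $i-1$ to level $i$ by rearranging \eqref{eq: functional equation} and using \eqref{seq1: strong limit of q} and \eqref{seq2: strong limit of v}. You supply more detail than the paper does---in particular the explicit verification that $\beta_n(1)=1$ and the decomposition $\calN_k^a(t)/k-\lambda t=(\lambda_k/k)(\calN_k^a(t)/\lambda_k-t)+(\lambda_k/k-\lambda)t$---but the underlying argument is the same.
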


\begin{proof}
	For brevity, let us omit $\omega$ from the notation. Since $\bu_k(t, 0) = \calN_k^a(t) / k$ for all $t \geq 0$, it follows from \eqref{seq3: strong limit of n} that the functions $\bu_k(0)$ converge uniformly over compact sets to the function $\bu(0)$ defined by $\bu(t, 0) \defeq \lambda t$ for all $t \geq 0$. Note that \eqref{seq1: strong limit of q} and \eqref{seq2: strong limit of v} imply that the functions $\bq_k(i)$ and $\bv_k(i)$ converge uniformly over compact sets for all $i \geq 0$. Hence, if \eqref{eq: uniform convergence of u k} holds for $i = j - 1$, then it must also hold for $i = j$ by \eqref{eq: functional equation}. We have already established that \eqref{eq: uniform convergence of u k} holds for $i = 0$, so we conclude that \eqref{eq: uniform convergence of u k} holds for all $i \geq 0$.
\end{proof}

The lemma implies that there exists a process $\bu$ on $(\Omega, \calF, \prob)$ such that \eqref{eq: uniform convergence of u k} holds and
\begin{equation}
\label{eq: functional equation in the limit}
\begin{split}
\bq(t, i) &= \bq(0, i) + \bu(t, i - 1) - \bu(t, i) \\
&- \int_0^t \left[\bq(s, i) - \bq(s, i + 1)\right]ds \quad \text{for all} \quad i \geq 1 \quad \text{and} \quad t \geq 0
\end{split}
\end{equation}
with probability one. The next lemma concerns the asymptotic behavior of the functions $\beta_n$ and will be used to characterize the process $\bu$; a proof is provided in Appendix \ref{app: auxiliary results}.

\begin{lemma}
	\label{lem: uniform convergence of sampling functions}
	The functions $\alpha_n$ satisfy that
	\begin{equation}
	\label{eq: limit of alpha n}
	\lim_{n \to \infty} \sup_{x \in [0, 1]} \left|\alpha_n(d + 1, x) - x^{d + 1}\right| = 0 \quad \text{for all} \quad d \geq 0.
	\end{equation}
	Also, the functions $\beta_n$ have the following limits:
	\begin{align}
	&\lim_{n \to \infty} \sup_{x \in [0, \theta]} \left|\beta_n(x) - x\varphi(x)\right| = 0 \quad \text{for all} \quad \theta \in [0, 1), \label{eq: limit of beta n} \\
	&\lim_{n \to \infty} \sup_{x \in [0, 1]} \left|\beta_n(x) - x\varphi(x)\right| = 0 \quad \text{if} \quad p(\infty) = 0. \label{eq: limit of beta n non-degenarate case}
	\end{align}
\end{lemma}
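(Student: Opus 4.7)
The plan is to prove the three claims in order: first the uniform limit of $\alpha_n(d+1, \cdot)$ to $x^{d+1}$ on $[0,1]$ for each fixed $d$; then uniform convergence of $\beta_n$ to $x\varphi(x)$ on every interval $[0, \theta]$ with $\theta < 1$; finally the extension to $[0, 1]$ under $p(\infty) = 0$.

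For \eqref{eq: limit of alpha n}, the key identity is
\[
\frac{nx - m}{n - m} = x - \frac{m(1 - x)}{n - m}, \qquad x \in [0, 1],
\]
which shows each factor in $\alpha_n(d+1, x)$ is bounded above by $x$, so $\alpha_n(d+1, x) \leq x^{d+1}$ globally, and is within $m/(n-m)$ of $x$. Combining this with the telescoping estimate $\bigl|\prod_m a_m - \prod_m b_m\bigr| \leq \sum_m |a_m - b_m|$, valid for factors in $[0, 1]$, yields $|\alpha_n(d+1, x) - x^{d+1}| \leq \sum_{m = 0}^{d} m/(n-m)$ on $\{nx \geq d\}$. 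On the remaining region $\{x < d/n\}$, $\alpha_n(d+1, x) = 0$ while $x^{d+1} \leq (d/n)^{d+1}$. Both estimates vanish as $n \to \infty$ for fixed $d$, giving uniform convergence on $[0, 1]$.

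For \eqref{eq: limit of beta n}, I would write
\[
\beta_n(x) - x\varphi(x) = \sum_{d = 0}^{\infty} \bigl[\alpha_n(d+1, x) - x^{d+1}\bigr] p_n(d) + \sum_{d = 0}^{\infty} x^{d+1} \bigl[p_n(d) - p(d)\bigr],
\]
using $p_n(d) = 0$ for $d \geq n$. Given $\varepsilon > 0$, I would pick $D$ so large that $\sum_{d > D} \theta^{d+1} < \varepsilon$; the tail of each series is then controlled using $\alpha_n(d+1, x) \leq x^{d+1} \leq \theta^{d+1}$, while the head is handled by \eqref{eq: limit of alpha n} and the pointwise convergence $p_n(d) \to p(d)$, respectively. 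For \eqref{eq: limit of beta n non-degenarate case} I would use monotonicity: $\beta_n$ is nondecreasing (each $\alpha_n(d+1, \cdot)$ is), $\beta_n(1) = \sum_d p_n(d) = 1$, and $x \mapsto x\varphi(x)$ is also nondecreasing with $\varphi(1) = 1$ under $p(\infty) = 0$. A sandwich argument on $[\theta, 1]$ then gives $|\beta_n(x) - x\varphi(x)| \leq \max\{1 - \beta_n(\theta),\, 1 - \theta\varphi(\theta)\}$, which is made arbitrarily small by first choosing $\theta$ near $1$ (continuity of $\varphi$ at $1$) and then applying \eqref{eq: limit of beta n} on $[0, \theta]$.

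The main obstacle is the passage from \eqref{eq: limit of alpha n} to \eqref{eq: limit of beta n}: the error bound $\sum_{m=0}^d m/(n-m)$ grows with $d$, so uniform convergence of each component does not immediately yield uniform convergence of the weighted sum $\beta_n$. This is precisely where restricting to $[0, \theta]$ with $\theta < 1$ is essential, because only then does the tail $\sum_{d > D} \theta^{d+1}$ decay geometrically uniformly in $x$, enabling a truncation that is uniform in $n$. Extending to $x = 1$ requires the non-degeneracy $p(\infty) = 0$, without which $\beta_n(1) = 1$ but $\varphi(1) = 1 - p(\infty) < 1$, causing the claim to fail at $x = 1$.
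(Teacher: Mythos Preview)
Your proposal is correct and follows essentially the same route as the paper's proof: the same split of $[0,1]$ into $[0,d/n)$ and $[d/n,1]$ for $\alpha_n$, the same truncation at a finite $D$ with geometric tail control via $\alpha_n(d+1,x)\leq x^{d+1}\leq\theta^{d+1}$ for $\beta_n$ on $[0,\theta]$, and the same monotonicity-plus-$\beta_n(1)=1$ sandwich for the extension to $[0,1]$ when $p(\infty)=0$. The only cosmetic difference is that you bound $|\alpha_n(d+1,x)-x^{d+1}|$ on $\{nx\geq d\}$ by the explicit telescoping estimate $\sum_{m=0}^{d} m/(n-m)$, whereas the paper phrases this as uniform continuity of the product map $(y_0,\dots,y_d)\mapsto\prod_m y_m$ combined with $\max_m|(nx-m)/(n-m)-x|\leq d/(n-d)$; these are the same argument in different clothing.
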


The following proposition characterizes the process $\bu$ in a set of probability one.

\begin{proposition}
	\label{prop: derivatives of u}
	Fix $\omega \in \Omega$ as in Lemma \ref{lem: limit of u} and such that $\bq(\omega)$ is continuous. The functions $\bu(\omega, i)$ are absolutely continuous for all $i \geq 0$. Furthermore, there exists a set $\calD(\omega) \subset (0, \infty)$ such that the complement of $\calD(\omega)$ in $(0, \infty)$ has zero Lebesgue measure and the functions $\bq(\omega, i)$ and $\bu(\omega, i)$ are differentiable for all $i \geq 0$ at every point in $\calD(\omega)$. In addition, the following properties hold.
	\begin{enumerate}
		\item[(a)] If $p(\infty) = 0$ and $t_0 \in \calD(\omega)$, then
		\begin{equation*}
			\dot{\bu}(\omega, t_0, i) = \lambda\bq(\omega, t_0, i) \varphi\left(\bq(\omega, t_0, i)\right) \quad \text{for all} \quad i \geq 1.
		\end{equation*}
		
		\item[(b)] If $p(\infty) > 0$ and $t_0 \in \calD(\omega)$, then
		\begin{equation*}
			\dot{\bu}(\omega, t_0, i) = \begin{cases}
				\lambda\bq(\omega, t_0, i) \varphi\left(\bq(\omega, t_0, i)\right) & \text{if} \quad \bq(\omega, t_0, i) < 1, \\
				\lambda - 1 + \bq(\omega, t_0, i + 1) & \text{if} \quad \bq(\omega, t_0, i) = 1,
			\end{cases}
			\quad \text{for all} \quad i \geq 1.
		\end{equation*}
		
		\item[(c)] $\bu(\omega, t, 0) = \lambda t$ for all $t \geq 0$. 
	\end{enumerate}
	Moreover, we may write
	\begin{equation}
		\label{eq: general expression for dot u}
		\begin{split}
			\dot{\bu}(\omega, t_0, i) &= \lambda \bq(\omega, t_0, i) \varphi\left(\bq(\omega, t_0, i)\right) + \left[\lambda p(\infty) - 1 + \bq(\omega, t_0, i + 1)\right]^+\ind{\bq(\omega, t_0, i) = 1}
		\end{split}
	\end{equation}
	for all $t_0 \in \calD(\omega)$, $i \geq 1$ and $p(\infty) \in [0, 1]$.
\end{proposition}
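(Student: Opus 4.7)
The plan is to first establish smoothness of $\bu(\omega, \cdot, i)$ and $\bq(\omega, \cdot, i)$, and then to identify $\dot{\bu}(\omega, t_0, i)$ by two different techniques, depending on whether the direct approximation through $\beta_k$ is available. Since $0 \leq \beta_k \leq 1$, each $\bu_k(\omega, \cdot, i)$ is nondecreasing and dominated by the arrival count: $\bu_k(\omega, t, i) - \bu_k(\omega, s, i) \leq \left[\calN_k^a(t) - \calN_k^a(s)\right]/k$ for $s \leq t$. Letting $k \to \infty$ via \eqref{seq3: strong limit of n} shows that $\bu(\omega, \cdot, i)$ is $\lambda$-Lipschitz, hence absolutely continuous and differentiable almost everywhere. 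The functional equation \eqref{eq: functional equation in the limit} transfers the same regularity to $\bq(\omega, \cdot, i)$. I would define $\calD(\omega)$ as the complement in $(0, \infty)$ of the countable union of null sets on which any of these derivatives fails to exist. Part (c) is immediate from Lemma \ref{lem: limit of u}.

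For the cases where direct approximation works, fix $t_0 \in \calD(\omega)$ and assume either $p(\infty) = 0$ or $\bq(\omega, t_0, i) < 1$. Lemma \ref{lem: uniform convergence of sampling functions} yields uniform convergence of $\beta_k$ to $x \mapsto x\varphi(x)$ on some closed interval $[0, \theta]$ containing $\bq(\omega, t_0, i)$ (with $\theta = 1$ in the nondegenerate case, and $\theta < 1$ chosen using continuity of $\bq$ in the other case, which also guarantees $\bq_k(\omega, r, i) \leq \theta$ for $r$ in a right neighborhood of $t_0$ and all large $k$). For small $s > 0$ I would split
\[
\bu_k(\omega, t_0 + s, i) - \bu_k(\omega, t_0, i) = \frac{1}{k} \sum_{m \,:\, \tau_k^m \in (t_0, t_0 + s]} \beta_k\bigl(\bar{q}_k^m(i)\bigr),
\]
replace each $\bar{q}_k^m(i)$ by $\bq(\omega, t_0, i)$ at a cost of $o(1)$ per term (using $\Delta_k(t_0+s) \to 0$ and the Lipschitz bound on $\bq_k$ to move to $\bq_k(\omega, \tau_k^m, i)$, \eqref{seq1: strong limit of q} to pass to $\bq(\omega, \tau_k^m, i)$, and continuity of $\bq$ to reach $\bq(\omega, t_0, i)$), and then apply the uniform convergence of $\beta_k$. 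Dividing by $s$, using \eqref{seq3: strong limit of n} to convert the number of arrivals in $(t_0, t_0+s]$ into $\lambda s + o(1)$, and sending $s \to 0$ yields $\dot{\bu}(\omega, t_0, i) = \lambda \bq(\omega, t_0, i) \varphi\bigl(\bq(\omega, t_0, i)\bigr)$. This settles part (a) in full and the first sub-case of part (b).

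The main obstacle is the case $p(\infty) > 0$ with $\bq(\omega, t_0, i) = 1$, where the direct approach breaks down because $\beta_n$ has a limiting jump of size $p(\infty)$ at $x = 1$: heuristically $\beta_n(1^-) \to 1 - p(\infty)$ while $\beta_n(1) = 1$. Here I would sidestep the approximation and instead differentiate the functional equation \eqref{eq: functional equation in the limit} at $t_0$ to obtain
\[
\dot{\bq}(\omega, t_0, i) = \dot{\bu}(\omega, t_0, i-1) - \dot{\bu}(\omega, t_0, i) - \bq(\omega, t_0, i) + \bq(\omega, t_0, i+1).
\]
Since $\bq(\omega, \cdot, i) \leq 1$ attains its maximum at $t_0$ and is differentiable there, $\dot{\bq}(\omega, t_0, i) = 0$. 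Monotonicity forces $\bq(\omega, t_0, j) = 1$ for all $j \leq i$, so induction on $i$, with base $\dot{\bu}(\omega, t_0, 0) = \lambda$ (part (c)) and inductive step $\dot{\bu}(\omega, t_0, i) = \dot{\bu}(\omega, t_0, i-1) - 1 + \bq(\omega, t_0, i+1)$ (where $\dot{\bu}(\omega, t_0, i-1) = \lambda$ by the previous step combined with $\bq(\omega, t_0, i) = 1$), yields $\dot{\bu}(\omega, t_0, i) = \lambda - 1 + \bq(\omega, t_0, i+1)$, completing part (b).

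Finally, \eqref{eq: general expression for dot u} would be verified by cases. When $\bq(\omega, t_0, i) < 1$ the indicator vanishes and the expression reduces to cases (a)/(b). When $\bq(\omega, t_0, i) = 1$ and $p(\infty) = 0$, the positive part also vanishes and $\lambda\varphi(1) = \lambda$ matches part (a). When $\bq(\omega, t_0, i) = 1$ and $p(\infty) > 0$, consistency with the case (b) value $\lambda - 1 + \bq(\omega, t_0, i+1)$ requires the reflection positivity $\lambda p(\infty) - 1 + \bq(\omega, t_0, i+1) \geq 0$; this must hold almost everywhere on $\{\bq(\omega, \cdot, i) = 1\}$, since otherwise the Skorohod reflection interpretation of Remark \ref{rem: skorohod equations} would push $\bq(\omega, \cdot, i)$ strictly below $1$ on a set of positive measure, contradicting that the level set has positive Lebesgue density at these points. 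After a final refinement of $\calD(\omega)$ excluding this null set, \eqref{eq: general expression for dot u} holds throughout.
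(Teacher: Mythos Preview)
Your treatment of absolute continuity, part (c), part (a), and both sub-cases of part (b) follows essentially the same route as the paper, with only cosmetic differences in how the $\varepsilon$--$\delta$ approximation is organized.

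The genuine gap is in the last step, where you need the positivity
\[
\lambda p(\infty) - 1 + \bq(\omega, t_0, i+1) \geq 0
\]
at points $t_0 \in \calD(\omega)$ with $p(\infty) > 0$ and $\bq(\omega, t_0, i) = 1$. Your appeal to Remark~\ref{rem: skorohod equations} is circular: that remark is a reformulation of the fluid equation \eqref{eq: fluid dynamics}, and the whole purpose of the present proposition is to establish that the subsequential limit $\bq$ satisfies \eqref{eq: fluid dynamics}. You cannot invoke the Skorohod representation of the solution before you know $\bq$ is a solution. The density-point argument you sketch is also incomplete: knowing that $t_0$ is a density point of the set where the inequality fails does not by itself force $\bq(\omega, \cdot, i)$ below $1$ nearby, because you have no independent control on $\dot{\bu}(\omega, \cdot, i)$ at those points other than the formula from part (b).

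The paper closes this gap by returning to the prelimit. Since $\bq(\omega, t_0, i) = 1$, for any $\theta \in [0, 1)$ one has $\bar{\bq}_k(\omega, t, i) \geq \theta$ for $t$ in a right neighborhood of $t_0$ and all large $k$; then monotonicity of $\beta_k$ gives $\beta_k(\bar{q}_k^m(i)) \geq \beta_k(\theta)$ for every arrival in that window, and passing to the limit via \eqref{seq3: strong limit of n} and Lemma~\ref{lem: uniform convergence of sampling functions} yields $\dot{\bu}(\omega, t_0, i) \geq \lambda \theta \varphi(\theta)$. Letting $\theta \uparrow 1$ gives $\dot{\bu}(\omega, t_0, i) \geq \lambda \varphi(1) = \lambda(1 - p(\infty))$, which combined with the value $\lambda - 1 + \bq(\omega, t_0, i+1)$ from part (b) is exactly the required positivity. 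Note that this holds at \emph{every} such $t_0 \in \calD(\omega)$, with no further refinement needed.
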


\begin{proof}
	For brevity, we omit $\omega$ from the notation. It follows from \eqref{eq: limiting degree distribution condition} that there exists $L \geq 0$ such that $\lambda_k \leq k L$ for all $k \in \calK$. This implies that if $s, t \in [0, T]$, then
	\begin{align*}
	\left|\bu_k(t, i) - \bu_k(s, i)\right| \leq \frac{1}{k}\left|\calN_k^a(t) - \calN_k^a(s)\right| &\leq \frac{\lambda_k}{k}\left|t - s\right| + \frac{2}{k} \sup_{u \in [0, T]} \left|\calN_k^a(u) - \lambda_k u\right| \\
	&\leq L\left|t - s\right| + \frac{2}{k} \sup_{u \in [0, T]} \left|\calN_k^a(u) - \lambda_k u\right|
	\end{align*}
	for all $i \geq 0$. If we take the limit as $k \to \infty$ on both sides of the latter inequality, then \eqref{seq3: strong limit of n} implies that $\bu(i)$ is Lipschitz of modulus $L$ for all $i$, and in particular absolutely continuous. It follows from \eqref{eq: functional equation in the limit} that $\bq(i)$ is Lipschitz of modulus $2L + 1$ for all $i$, and thus also absolutely continuous. The existence of $\calD$ is a straightforward consequence. 
	
	Note that property (c) was proved in Lemma \ref{lem: limit of u}, so it only remains to show that properties (a) and (b) hold. For this purpose we will assume that the processes $\bu_k$ are defined as in \eqref{eq: general decomposition}. The proof is similar when these processes are defined as in \eqref{eq: decomposition for graph resampled between every two arrivals}.

	Suppose that $p(\infty) = 0$ and fix $t_0 \in \calD$ and $i \geq 1$. By Abel's theorem, $\varphi$ is continuous on $[0, 1]$, and by Lemma \ref{lem: uniform convergence of sampling functions}, $\beta_k$ converges uniformly over $[0, 1]$ to the function $x \mapsto x\varphi(x)$. Given $\varepsilon > 0$, these observations imply that there exist $\delta_0 > 0$ and $k_0 \geq 1$ such that:
	\begin{equation}
	\label{aux: continuity of phi and uniform convergence of beta}
	\begin{split}
	&\left|x\varphi(x) - \bq(t_0, i) \varphi\left(\bq(t_0, i)\right)\right| \leq \frac{\varepsilon}{2} \quad \text{if} \quad x \in [0, 1] \quad \text{and} \quad \left|x - \bq(t_0, i)\right| \leq \delta_0, \\
	&\left|\beta_k(x) - x\varphi(x)\right| \leq \frac{\varepsilon}{2} \quad \text{if} \quad x \in [0, 1] \quad \text{and} \quad k \geq k_0.
	\end{split}
	\end{equation}
	By \eqref{seq1: strong limit of q}, the functions $\bq_k(i)$ converge uniformly over compact sets to the continuous function $\bq(i)$. Hence, there exist $\delta_1 > 0$ and $k_1 \geq k_0$ such that
	\begin{equation*}
	\left|\bq_k(t, i) - \bq(t_0, i)\right| \leq \delta_0 \quad \text{if} \quad \left|t - t_0\right| \leq 2\delta_1 \quad \text{and} \quad k \geq k_1.
	\end{equation*}
	Moreover, by \eqref{seq4: strong limit of r} there exists $k_2 \geq k_1$ such that $k \geq k_2$ implies that $t_0 - 2\delta_1 < \sigma_k^m < t_0 - \delta_1$ for some $m \geq 1$, and therefore
	\begin{equation}
	\label{aux: distance between resampling times}
	\left|\bar{\bq}_k(t, i) - \bq(t_0, i)\right| \leq \delta_0 \quad \text{if} \quad \left|t - t_0\right| \leq \delta_1 \quad \text{and} \quad k \geq k_2.
	\end{equation}
	Indeed, the resampling times $\sigma_k^m$ partition the interval $[0, t_0 - \delta_1]$ into subintervals of length upper bounded by $\Delta_k(t_0 - \delta_1)$, and the latter quantity approaches zero as $k \to \infty$.
	
	By \eqref{aux: continuity of phi and uniform convergence of beta} and \eqref{aux: distance between resampling times}, we have
	\begin{equation*}
	\left|\beta_k\left(\bar{q}_k^m\right) - \bq(t_0, i) \varphi\left(\bq(t_0, i)\right)\right| \leq \varepsilon \quad \text{for all} \quad \calN_k^a(t_0 - \delta_1) < m < \calN_k^a\left(t_0 + \delta_1\right) \quad \text{if} \quad k \geq k_2.
	\end{equation*}
	It follows that if $t_0 < t < t_0 + \delta_1$ and $k \geq k_2$, then
	\begin{align*}
	&\bu_k(t, i) - \bu_k(t_0, i) = \frac{1}{k}\sum_{m = \calN_k^a(t_0) + 1}^{\calN_k^a(t)} \beta_k\left(\bar{q}_k^m\right) \leq \frac{\calN_k^a(t) - \calN_k^a(t_0)}{k}\left[\bq(t_0, i) \varphi\left(\bq(t_0, i)\right) + \varepsilon\right], \\
	&\bu_k(t, i) - \bu_k(t_0, i) = \frac{1}{k}\sum_{m = \calN_k^a(t_0) + 1}^{\calN_k^a(t)} \beta_k\left(\bar{q}_k^m\right) \geq \frac{\calN_k^a(t) - \calN_k^a(t_0)}{k}\left[\bq(t_0, i) \varphi\left(\bq(t_0, i)\right) - \varepsilon\right].
	\end{align*}
	Therefore, \eqref{seq3: strong limit of n} implies that
	\begin{equation*}
	\lambda\left[\bq(t_0, i) \varphi\left(\bq(t_0, i)\right) - \varepsilon\right] \leq \lim_{t \to t_0^+} \lim_{k \to \infty} \frac{\bu_k(t, i) - \bu_k(t_0, i)}{t - t_0} \leq \lambda\left[\bq(t_0, i) \varphi\left(\bq(t_0, i)\right) + \varepsilon\right].
	\end{equation*}
	This proves (a) because $\varepsilon$ is arbitrary and the expression in the middle equals $\dot{\bu}(t_0, i)$.
	
	Assume now that $p(\infty) > 0$. Recall that the functions $\bq_k(i)$ converge uniformly over compact sets to the continuous function $\bq(i)$. Hence, $\bq(t_0, i) < 1$ implies that there exists $\theta \in [0, 1)$ such that $\bq_k(t, i) < \theta$ for all $t$ in a sufficiently small neighborhood of $t_0$ and all large enough $k \in \calK$. By Lemma \ref{lem: uniform convergence of sampling functions}, the functions $\beta_k$ converge to the function $x \mapsto x\varphi(x)$ uniformly over the interval $[0, \theta]$. Therefore, the expression in (b) for $\dot{\bu}(t_0, i)$ in the case where $\bq(t_0, i) < 1$ can be established using the same arguments as in the proof of (a).
	
	Suppose then that $\bq(t_0, i) = 1$. Then $\dot{\bq}(t_0, i) = 0$ since $\bq(i) \leq 1$. By \eqref{eq: functional equation in the limit},
	\begin{equation*}
	0 = \dot{\bu}(t_0, i - 1) - \dot{\bu}(t_0, i) - \left[\bq(t_0, i) - \bq(t_0, i + 1)\right].
	\end{equation*}
	In fact this holds for $1 \leq j \leq i$ because $\bq(t_0, i) = 1$ implies $\bq(t_0, j) = 1$ for all $j \leq i$. It follows from (c) that $\dot{\bu}(t_0, 0) = \lambda$, so we conclude that
	\begin{align*}
	\dot{\bu}(t_0, i) &= \dot{\bu}(t_0, i - 1) - \left[\bq(t_0, i) - \bq(t_0, i + 1)\right] \\
	&= \lambda - \sum_{j = 1}^i \left[\bq(t_0, j) - \bq(t_0, j + 1)\right] = \lambda - 1 + \bq(t_0, i + 1).
	\end{align*}
	This completes the proof of (b).
	
	Observe that \eqref{eq: general expression for dot u} holds if $p(\infty) = 0$ or $\bq(t_0, i) < 1$ by (a) and (b). Suppose then that $p(\infty) \in (0, 1]$ and $\bq(t_0, i) = 1$. Given $0 \leq \theta < 1$, there exists $\varepsilon > 0$ such that
	\begin{equation*}
		\bar{\bq}_k(t, i) \geq \theta \quad \text{for all} \quad t \in [t_0, t_0 + \varepsilon) \quad \text{and all large enough} \quad k \in \calK;
	\end{equation*}
	this follows as in the proof of \eqref{aux: distance between resampling times}. If $t$ and $k$ are as above, then
	\begin{equation*}
		\frac{\bu_k(t, i) - \bu_k(t_0, i)}{t - t_0} = \frac{1}{t - t_0} \sum_{m = \calN_k^a(t_0) + 1}^{\calN_k^a(t)} \frac{\beta_k\left(\bar{q}_k^m\right)}{k} \geq \frac{1}{t - t_0} \sum_{m = \calN_k^a(t_0) + 1}^{\calN_k^a(t)} \frac{\beta_k\left(\theta\right)}{k}
	\end{equation*}
	since $\beta_k$ is nondecreasing. By \eqref{seq3: strong limit of n} and Lemma \ref{lem: uniform convergence of sampling functions}, the right-hand side approaches $\lambda\theta \varphi(\theta)$ as $k \to \infty$. It follows from (b) and the above inequality that
	\begin{equation*}
		\lambda - 1 + \bq(t_0, i + 1) = \dot{\bu}(t_0, i) \geq \lambda \theta \varphi(\theta) \quad \text{for all} \quad \theta \in [0, 1).
	\end{equation*}
	This also holds with $\theta = 1$ since $\varphi$ is continuous in $[0, 1]$ by Abel's theorem. Thus,
	\begin{equation*}
		0 \leq \lambda\left[1 - \varphi(1)\right] - 1 + \bq(t_0, i + 1) = \lambda p(\infty) - 1 + \bq(t_0, i + 1),
	\end{equation*}
	and we conclude from (b) that \eqref{eq: general expression for dot u} holds.
\end{proof}

It follows from \eqref{eq: functional equation in the limit} and the above proposition that $\bq$ satisfies \eqref{eq: fluid dynamics} almost surely. We may now complete the proof of the fluid limit.

\begin{proof}[Proof of Theorem \ref{the: fluid limit}]
	By Proposition \ref{prop: tightness}, every subsequence of $\set{\bq_n}{n \geq 1}$ has a further subsequence that converges weakly in $D_{\ell_1}[0, \infty)$. By the earlier arguments in this section and Proposition \ref{prop: derivatives of u}, every convergent subsequence converges to a process $\bq$ such that
	\begin{equation*}
	\begin{split}
	\bq(t, i) &= \bq(0, i) + \lambda\int_0^t \left[a_{i - 1}\left(\bq(s)\right) - a_i\left(\bq(s)\right)\right]ds \\
	&- \int_0^t \left[\bq(s, i) - \bq(s, i + 1)\right]ds \quad \text{for all} \quad i \geq 1 \quad \text{and} \quad t \geq 0
	\end{split}
	\end{equation*}
	almost surely. Also, $\bq$ is almost surely continuous from $[0, \infty)$ into $\ell_1$ by Proposition \ref{prop: tightness}.
\end{proof}

\begin{appendices}
	
\section{Simulations}
\label{app: simulations}

\begin{figure}
	\centering
	\includegraphics{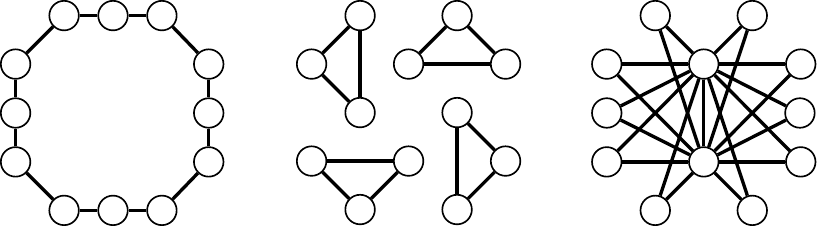}
	\caption{From left to right: the ring, the disjoint triangles and the double-star for $n = 12$.}
	\label{fig: graphs}
\end{figure}

Consider the three undirected graph topologies depicted in Figure \ref{fig: graphs}. All the nodes in the ring and the disjoint triangles have exactly two neighbors, and the degree distribution of the double-star is given by $p_n(2) = (n - 2) / n$ and $p_n(n - 1) = 2 / n$. Hence, the limiting degree distribution is the point mass at $d = 2$ in all three cases. Nonetheless, there are striking structural differences between these graphs.
\begin{enumerate}
	\item[(a)] The ring and the double-star are connected, whereas the other graph topology has multiple connected components.
	
	\item[(b)] The maximum degree of the double-star is $n - 1$, whereas the maximum degree of the ring and the disjoint triangles is $2$.
	
	\item[(c)] The diameter of the ring is $\floor{n / 2}$, whereas the diameter of the double-star is $2$.
\end{enumerate}

Below we report the results of various numerical experiments based on the three graph topologies of Figure \ref{fig: graphs}. First we evaluate the performance of the load balancing algorithm studied in this paper when the graph is static, and we compare this performance with the dynamic case. Then we show that \eqref{eq: fluid dynamics} accurately describes the behavior of the occupancy process in a large system, and we observe that Theorem \ref{the: fluid limit} does not seem to apply in a regime where the pseudo-separation property does not hold.

\subsection{Performance of static graphs}
\label{sub: performance of static graphs}

Figure \ref{fig: time averages} compares the performance of static graphs with that of dynamic graphs, for the topologies depicted in Figure \ref{fig: graphs}; in the dynamic case the resampling procedure is carried out by just reassigning the servers to the nodes of a fixed graph uniformly at random. By Theorem \ref{the: interchange of limits}, if the graph is resampled as a Poisson process, then the steady-state queue length distribution is asymptotically equivalent for the three topologies, and given by the sequence $q^*$ defined in Proposition \ref{prop: fixed point}. In contrast, Figure \ref{fig: time averages} shows that the steady-state queue length distribution depends on the topology of the graph in the static setting, and that the time average of $\bq_n(i)$ is larger than $q^*(i)$. This shows that performance improves when the graph is resampled over time for any of the topologies of Figure \ref{fig: graphs}.

\begin{figure}
	\centering
	\begin{subfigure}{0.49\columnwidth}
		\centering
		\includegraphics[width = \columnwidth]{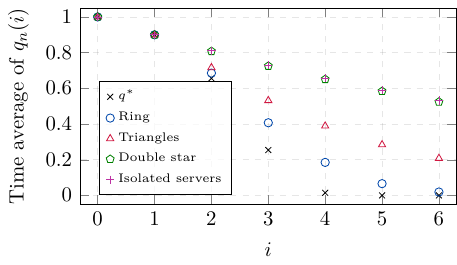}
		\caption{Time averages.}
		\label{fig: time averages}
	\end{subfigure}
	\hfill
	\begin{subfigure}{0.49\columnwidth}
		\centering
		\includegraphics[width = \columnwidth]{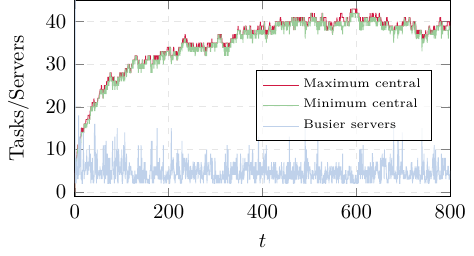}
		\caption{Double-star topology.}
		\label{fig: central servers}
	\end{subfigure}
	\caption{Load balancing on static graphs with the topologies depicted in Figure \ref{fig: graphs}. In all the cases the system starts empty, $n = 1500$ and $\lambda_n = 9 n / 10$. The plot on the left shows time averages computed over the second half of the simulation and the equilibrium point $q^*$ defined in Proposition \ref{prop: fixed point}. The plot on the right concerns the double-star topology. It shows the maximum and minimum numbers of tasks across the central servers and the number of servers that have more tasks than, or as many tasks as, the central server with the fewest tasks. The maximum and minimum numbers of tasks across the central servers remain very close together and thus are difficult to distinguish from each other.}
	\label{fig: static}
\end{figure}

Remarkably, the performance of the double-star is equivalent to that of $n$ independent single-server queues when the graph is static. This is explained by Figure \ref{fig: central servers}, which shows the number of tasks at the two servers placed in the center of the double-star, and the number of servers that have more tasks than, or as many tasks as, the central server with the fewest tasks. At time zero all the servers have the same number of tasks, but the percentage of servers with strictly less tasks than both of the central servers is approximately 99\% or larger throughout the rest of the simulation. When a task arrives to any of these servers, the server places the task in its own queue, as if it was isolated, because its only two neighbors are the central servers, which have longer queues.

The behavior of the double-star topology may be explained as follows. The arrival rate of tasks to the central server with the fewest tasks is at least $\lambda_n$ times the fraction of servers that have strictly more tasks, whereas tasks leave from this server at unit rate. As a result, the number of tasks at the central servers increases quickly and remains large throughout the simulation, while the fraction of servers with strictly more tasks than the central server with the fewest tasks remains small. This property is rigorously treated in \cite{goldsztajn2024server}, where it is further generalized to graphs with a more involved structure.

\subsection{Accuracy of the fluid approximation}
\label{sub: accuracy of the fluid approximation}

We now discuss simulations of the occupancy process for dynamic graphs. Figures~\ref{fig: loglog ring} and \ref{fig: loglog triangles} show sample paths of $\bq_n$ that remain close to the solution of \eqref{eq: fluid dynamics} for the ring topology and the disjoint triangles, both in the transient and stationary regimes and for a resampling rate as low as $\mu_n = \log \log n$. Figures \ref{fig: double star large n} and \ref{fig: double star very large n} show sample paths of $\bq_n$ when the graph topology remains double-star and the resampling rate is $\mu_n = \log n$.

\begin{figure}
	\centering
	\begin{subfigure}{0.49\columnwidth}
		\centering
		\includegraphics[width = \columnwidth]{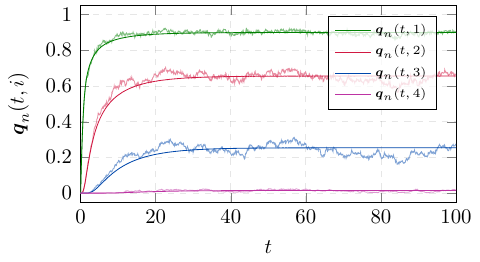}
		\caption{Ring with $n = 1500$ and $\mu_n = \log \log n$.}
		\label{fig: loglog ring}
	\end{subfigure}
	\hfill
	\begin{subfigure}{0.49\columnwidth}
		\centering
		\includegraphics[width = \columnwidth]{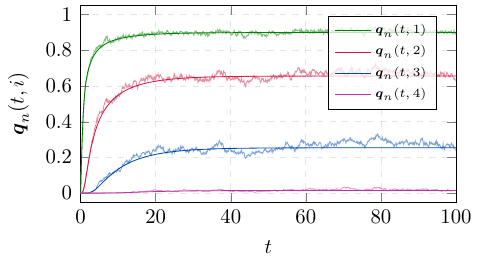}
		\caption{Triangles with $n = 1500$ and $\mu_n = \log \log n$.}
		\label{fig: loglog triangles}
	\end{subfigure}
	\vfill
	\vspace{3mm}
	\vfill
	\begin{subfigure}{0.49\columnwidth}
		\centering
		\includegraphics[width = \columnwidth]{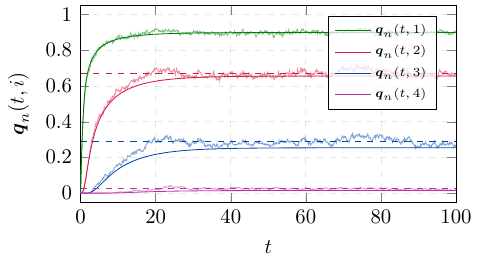}
		\caption{Double-star with $n = 1500$ and $\mu_n = \log n$.}
		\label{fig: double star large n}
	\end{subfigure}
	\hfill
	\begin{subfigure}{0.49\columnwidth}
		\centering
		\includegraphics[width = \columnwidth]{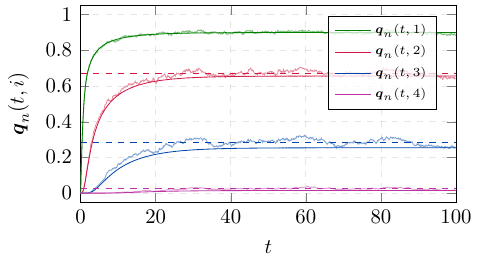}
		\caption{Double-star with $n = 5000$ and $\mu_n = \log n$.}
		\label{fig: double star very large n}
	\end{subfigure}
	\caption{Solution of \eqref{eq: fluid dynamics} and sample paths of $\bq_n$ for dynamic graphs. In all the cases the system starts empty, $\lambda_n = 9 n / 10$ and the resampling process is Poisson. The dashed lines depicted in the two plots on the bottom correspond to time averages computed over the interval $[40, 100]$.}
	\label{fig: loglog}
\end{figure}

Note that $d_n^- + 1 = n$ for the double-star, thus \eqref{eq: arrival and resampling rates conditions} does not hold when $\mu_n = \log n$, and in fact the approximation provided by \eqref{eq: fluid dynamics} does not seem accurate. Moreover, as the number of servers increases from $n = 1500$ to $n = 5000$, the accuracy of the approximation does not seem to improve since the sample path of $\bq_n$ does not get closer to the solution of~\eqref{eq: fluid dynamics}. This indicates that Theorem \ref{the: fluid limit} may not apply when $\mu_n = \log n$ and the graph always has a double-star topology, which suggests that some condition, besides \eqref{eq: limiting degree distribution condition}, on the random graph law used to sample the graph, is necessary for the fluid limit. While the pseudo-separation property and \eqref{eq: arrival and resampling rates conditions} are clearly not necessary conditions for the fluid limit to hold, the latter observations indicate that the dependence of the pseudo-separation property on the maximum indegrees $d_n^-$ is not just an artifact of our proof technique but possibly a manifestation of some fundamental condition required for the fluid limit to hold.

\section{Proofs of several results}
\label{app: auxiliary results}

\begin{proof}[Proof of Lemma \ref{lem: derivative of phi}.]
	It follows from the mean value theorem that for each $x \in (0, 1)$ and $d \geq 0$ there exists $\theta(x, d) \in (x, 1)$ such that
	\begin{equation*}
	\frac{1 - x^d}{1 - x} = d\left[\theta(x, d)\right]^{d - 1}.
	\end{equation*}
	We conclude that
	\begin{equation*}
	\left|\frac{1 - x^d}{1 - x} - d\right| = d\left|\left[\theta(x, d)\right]^{d - 1} - 1\right| \leq d \quad \text{for all} \quad x \in (0, 1) \quad \text{and} \quad d \geq 0.
	\end{equation*}
	
	Given $\varepsilon > 0$, there exists $k \geq 0$ such that
	\begin{equation*}
	\sum_{d = k + 1}^\infty d p(d) \leq \varepsilon.
	\end{equation*}
	Therefore, we have
	\begin{align*}
	\lim_{x \to 1^-} \left|\frac{\varphi(1) - \varphi(x)}{1 - x} - \sum_{d = 0}^\infty dp(d)\right| &\leq  \lim_{x \to 1^-} \sum_{d = 0}^\infty \left|\frac{1 - x^d}{1 - x} - d\right|p(d) \\
	&\leq \lim_{x \to 1^-} \sum_{d = 0}^k \left|\frac{1 - x^d}{1 - x} - d\right|p(d) + \sum_{d = k + 1}^\infty dp(d) \leq \varepsilon.
	\end{align*}
	Since $\varepsilon$ is arbitrary, this proves the first identity in the claim of the lemma. The second identity follows from Abel's theorem.
\end{proof}

\begin{proof}[Proof of Proposition \ref{prop: existence and uniqueness of solutions}.]
	For the existence part, it suffices to construct occupancy processes $\bq_n$ such that $\bq_n(0) \Rightarrow q$ in $\ell_1$. The initial states $\bq_n(0)$ can be obtained by letting $\bq_n(0, i)$ be deterministic, equal to the number in $\set{m / n}{m = 0, \dots, n}$ that is closest to $q(i)$. Processes $\bq_n$ with these initial states can be constructed as in Section \ref{sub: stochastic equations}. It follows from Theorem \ref{the: fluid limit} that a fluid trajectory $\bq$ with initial condition $q$ and continuous from $[0, \infty)$ into $\ell_1$ exists.
	
	Suppose that $\bx$ and $\by$ solve \eqref{eq: fluid differential equation}. By Lemma \ref{lem: derivative of phi}, the derivative $\varphi'$ is continuous in $[0, 1]$, thus bounded. As a result, the function $x \mapsto x\varphi(x)$ is Lipschitz in $[0, 1]$. It follows from \eqref{eq: fluid differential equation} that there exists a constant $L \geq 0$ such that
	\begin{equation*}
	|\bx(t, i) - \by(t, i)| \leq |\bx(0, i) - \by(0, i)| + L\int_0^t \sum_{j = i - 1}^{i + 1} |\bx(s, j) - \by(s, j)|ds
	\end{equation*}
	for all $i \geq 1$ and $t \geq 0$. We conclude that
	\begin{equation*}
	\norm{\bx(t) - \by(t)}_1 \leq \norm{\bx(0) - \by(0)}_1 + 3L\int_0^t \norm{\bx(s) - \by(s)}_1ds \quad \text{for all} \quad t \geq 0.
	\end{equation*}
	By Gronwall's inequality, $\norm{\bx(t) - \by(t)}_1 \leq \norm{\bx(0) - \by(0)}_1 \e^{3Lt}$ for all $t \geq 0$. This implies that solutions are continuous with respect to the initial condition. Moreover, by setting $\bx(0) = \by(0)$ we conclude that there exists a unique solution for each initial condition.
\end{proof}

\begin{proof}[Proof of Lemma \ref{lem: monotonicity}.]
	We say that $\map{\bx}{[0, \infty)}{\ell_1}$ is a truncated-$k$ fluid trajectory if it satisfies \eqref{eq: fluid differential equation} for all $1 \leq i \leq k$ and the boundary condition $\bx(t, i) = 0$ for $t \geq 0$ and $i > k$. Using similar arguments as in \cite[Secction 3]{vvedenskaya1996queueing}, we will first establish that the monotonicity property holds for truncated fluid trajectories and then use that fact to show that the monotonicity property holds for untruncated fluid trajectories as well.
	
	Note that truncated-$k$ fluid trajectories can be regarded as solutions to a finite system of differential equations. The right-hand side of this system is Lipschitz by Lemma \ref{lem: derivative of phi}, which yields existence and uniqueness of solutions, as well as continuity of solutions with respect to the initial condition. Also, if $\bx$ is a truncated-$k$ fluid trajectory with $\bx(0) \in Q$, then
	\begin{equation}
		\label{aux: properties of truncated fluid trajectories}
		\bx(t) \in Q \quad \text{and} \quad \bx(t, i) \leq \sum_{j = 0}^i \bx(0, j)\frac{[(\lambda + 1)t]^{i - j}}{(i - j)!} \quad \text{for all} \quad t \geq 0 \quad \text{and} \quad i \geq 1.
	\end{equation}
	The first property corresponds to \cite[Lemma 1]{vvedenskaya1996queueing} and is obtained by noting that $\bx(t) \in Q$ and $\bx(t, i - 1) = \bx(t, i)$ for some $1 \leq i \leq k$ imply that $\dot{\bx}(t, i - 1) \geq 0$ and $\dot{\bx}(t, i) \leq 0$. The second property is similar to \cite[Lemma 4]{vvedenskaya1996queueing} and is derived by induction: the property obviously holds for $i = 0$, and if the property holds for $i - 1$, then
	\begin{align*}
		\bx(t, i) &\leq \bx(0, i) + \int_0^t\left[\lambda \bx(s, i - 1) \varphi\left(\bx(s, i - 1)\right) + \bx(s, i + 1)\right]ds \\
		&\leq \bx(0, i) + \int_0^t(\lambda + 1) \bx(s, i - 1)ds \\
		&\leq \bx(0, i) + \sum_{j = 0}^{i - 1} \bx(0, j)\frac{[(\lambda + 1)t]^{i - j}}{(i - j)!} = \sum_{j = 0}^i \bx(0, j)\frac{[(\lambda + 1)t]^{i - j}}{(i - j)!} \quad \text{for all} \quad t \geq 0,
	\end{align*}
	which means that the property holds for $i$ as well.
	
	Let $\bx$ be a truncated-$k_x$ fluid trajectory and $\by$ be a truncated-$k_y$ fluid trajectory. Next we show that $k_x \leq k_y$ and $\bx(0) \leq \by(0)$ imply that $\bx(t) \leq \by(t)$ for all $t \geq 0$, which is the counterpart of \cite[Lemma 2]{vvedenskaya1996queueing}. For this purpose, observe that the continuity of solutions with respect to  initial conditions implies that it suffices to prove the following property: if $\bx(0, i) < \by(0, i)$ for all $1 \leq i \leq k_y$, then $\bx(t, i) < \by(t, i)$ for all $t \geq 0$ and $1 \leq i \leq k_y$.
	
	Suppose then that $\bx(0, i) < \by(0, i)$ for all $1 \leq i \leq k_y$ and let
	\begin{equation*}
		\tau \defeq \inf\set{t \geq 0}{\bx(t, i) \geq \by(t, i)\ \text{for some}\ 1 \leq i \leq k_y}.
	\end{equation*}
	If $\tau = \infty$, then the monotonicity property holds. Hence, let us assume that $\tau < \infty$ and show that this leads to a contradiction. Observe that for each $1 \leq i \leq k_x$, we have
	\begin{align*}
		\dot{\bx}(t, i) - \dot{\by}(t, i) &= \lambda\left[\bx(t, i - 1)\varphi\left(\bx(t, i - 1)\right) - \by(t, i - 1)\varphi\left(\by(t, i - 1)\right)\right] \\
		&- \lambda\left[\bx(t, i)\varphi\left(\bx(t, i)\right) - \by(t, i)\varphi\left(\by(t, i)\right)\right] \\
		&- \left[\bx(t, i) - \by(t, i)\right] + \bx(t, i + 1) - \by(t, i + 1) \\
		&\leq \lambda\left[\by(t, i)\varphi\left(\by(t, i)\right) - \bx(t, i)\varphi\left(\bx(t, i)\right)\right] + \left[\by(t, i) - \bx(t, i)\right] \quad \text{if} \quad t \in [0, \tau],
	\end{align*}
	where the last step follows from $x \mapsto x\varphi(x)$ being increasing. Moreover, the inequality also holds for each $k_x < i \leq k_y$ since $\bx(i)$ is identically zero and $\dot{\by}(i) \geq -\lambda \by(i)\varphi\left(\by(i)\right) - \by(i)$.
	
	By Lemma \ref{lem: derivative of phi}, $\varphi'$ is bounded in $[0, 1]$, so there exists $L \geq 0$ such that
	\begin{align*}
		\by(i)\varphi\left(\by(i)\right) - \bx(i)\varphi\left(\bx(i)\right) &= \left|\by(i)\varphi\left(\by(i)\right) - \bx(i)\varphi\left(\bx(i)\right)\right| \\
		&\leq L\left|\by(i) - \bx(i)\right| = L\left[\by(i) - \bx(i)\right]
	\end{align*}
	in the interval $[0, \tau]$ for all $1 \leq i \leq k_y$. Therefore,
	\begin{equation*}
		\dot{\bx}(t, i) - \dot{\by}(t, i) \leq (\lambda L + 1)\left[\by(t, i) - \bx(t, i)\right] = -(\lambda L + 1)\left[\bx(t, i) - \by(t, i)\right]
	\end{equation*}
	for all $t \in [0, \tau]$ and $1 \leq i \leq k_y$. It follows that
	\begin{equation*}
		\bx(t, i) - \by(t, i) \leq \left[\bx(0, i) - \by(0, i)\right]\e^{-(\lambda L + 1)t} < 0 \quad \text{for all} \quad 1 \leq i \leq k_y \quad \text{and} \quad t \in [0, \tau].
	\end{equation*}
	Since $\bx(i)$ and $\by(i)$ are continuous for all $1 \leq i \leq k_y < \infty$, we must have $\bx(\tau, i) = \by(\tau, i)$ for some $1 \leq i \leq k_y$. However, the above inequality contradicts this observation, so the monotonicity property must hold for the truncated fluid trajectories, i.e.,
	\begin{equation}
		\label{aux: monotonicity of truncated fluid trajectories}
		\begin{split}
			&k_x \leq k_y \quad \text{and} \quad \bx(0, i) \leq \by(0, i) \quad \text{for all} \quad 1 \leq i \leq k_y \\
			&\text{implies that} \quad \bx(t, i) \leq \by(t, i) \quad \text{for all} \quad t \geq 0 \quad \text{and} \quad 1 \leq i \leq k_y.
		\end{split}
	\end{equation}

	Next we prove the monotonicity property for untruncated fluid trajectories. For this purpose, we associate to each fluid trajectory $\bx$ a sequence of truncated fluid trajectories $\bx_k$ such that $\bx_k$ is the truncated-$k$ fluid trajectory with initial condition $\bx_k(0, i) = \bx(0, i)$ for all $i \leq k$. It follows from \eqref{aux: monotonicity of truncated fluid trajectories} that $0 \leq \bx_k(t, i) \leq \bx_{k + 1}(t, i) \leq 1$ for all $t \geq 0$ and $i \geq 1$, and thus the following limit exists for all $t \geq 0$ and $i \geq 1$:
	\begin{equation*}
		\bx_\infty(t, i) \defeq \lim_{k \to \infty} \bx_k(t, i).
	\end{equation*}
	Moreover, we conclude from \eqref{aux: properties of truncated fluid trajectories} that $\bx_\infty(t) \in Q$ for all $t \geq 0$, because
	\begin{align*}
		\norm{\bx_\infty(t)}_1 = \sum_{i = 0}^\infty \lim_{k \to \infty} \bx_k(t, i) &\leq \sum_{i = 0}^\infty \sum_{j = 0}^i \bx(0, j)\frac{\left[(\lambda + 1)t\right]^{i - j}}{(i - j)!} \\
		&= \sum_{j = 0}^\infty \sum_{i = j}^\infty \bx(0, j)\frac{\left[(\lambda + 1)t\right]^{i - j}}{(i - j)!} \leq \norm{\bx(0)}_1 \e^{(\lambda + 1)t} < \infty.
	\end{align*}
	Note that $\bx_k$ satisfies \eqref{eq: fluid differential equation} for all $k \geq i$. Taking the limit as $k \to \infty$ on both sides of the integral version of this equation we conclude that $\bx_\infty$ satisfies \eqref{eq: fluid differential equation} as well. Proposition \ref{prop: existence and uniqueness of solutions} then implies that $\bx_\infty = \bx$, the unique fluid trajectory with initial condition $\bx(0)$.
	
	Now suppose that $\bx$ and $\by$ are fluid trajectories such that $\bx(0) \leq \by(0)$ and consider truncated-$k$ fluid trajectories $\bx_k$ and $\by_k$ such that $\bx_k(0, i) = \bx(0, i)$ and $\by_k(0, i) = \by(0, i)$ for all $i \leq k$. The monotonicity property for the truncated-$k$ fluid trajectories yields
	\begin{equation*}
		\bx(t, i) = \lim_{k \to \infty} \bx_k(t, i) \leq \lim_{k \to \infty} \by_k(t, i) = \by(t, i) \quad \text{for all} \quad t \geq 0 \quad \text{and} \quad i \geq 1.
	\end{equation*}
	Hence, the monotonicity property also holds for the untruncated fluid trajectories.
\end{proof}

\begin{proof}[Proof of Proposition \ref{prop: global attractivity}.]
	By Lemma \ref{lem: monotonicity}, it suffices to prove the proposition in the following two cases: $\bq(0) \geq q^*$ and $\bq(0) \leq q^*$. We only prove the proposition in the case $\bq(0) \geq q^*$ because the proof is analogous in the other case.
	
	Suppose then that $\bq(0) \geq q^*$. We proceed as in \cite[Propostion 4.5]{gamarnik2018delay}, letting
	\begin{align*}
	&\bs(t, i) \defeq \sum_{j = i}^\infty \bq(t, j) \quad \text{for all} \quad i \geq 1 \quad \text{and} \quad t \geq 0, \\
	&\dot{\bs}(t, i) \defeq \sum_{j = i}^\infty \dot{\bq}(t, j) = \lambda\bq(t, i - 1)\varphi\left(\bq(t, i - 1)\right) - \bq(t, i) \quad \text{for all} \quad i \geq 1 \quad \text{and} \quad t \geq 0.
	\end{align*}
	Since fluid trajectories take values in $Q \subset \ell_1$, both sums converge pointwise; i.e., for each fixed $t \geq 0$. It follows from \eqref{eq: fluid differential equation} that $|\dot{\bq}(i)|$ is bounded by $\lambda + 1$ for all $i \geq 1$, thus $\bq(i)$ is Lipschitz of modulus $\lambda + 1$. Moreover, because $\varphi'$ is bounded in the interval $[0, 1]$, there exists $M \geq 0$ such that the function $x \mapsto x\varphi(x)$ is Lipschitz of modulus $M$. Therefore,
	\begin{equation*}
	\sum_{j = i}^k \dot{\bq}(j) = \lambda\left[\bq(i - 1)\varphi\left(\bq(i - 1)\right) - \bq(k)\varphi(\bq(k))\right] - \left[\bq(i) - \bq(k + 1)\right]
	\end{equation*}
	is Lipschitz of modulus $4\max\{\lambda(\lambda + 1)M, \lambda + 1\}$ for all $k \geq i$. By the Arezl\'a-Ascoli theorem, the above partial sums converge uniformly over compact sets to $\dot{\bs}(i)$. Hence, it follows from \cite[Theorem 7.17]{rudin1976principles} that $\bs(i)$ also converges uniformly over compact sets and $\dot{\bs}(t, i)$ is the derivative of $\bs(i)$ at $t$ for all $t > 0$.
	
	Since $\varphi'$ is bounded in $[0, 1]$, there exists a constant $L \geq 0$ such that $\varphi$ is a Lipschitz function of modulus $L$. From this observation we conclude that
	\begin{align*}
	\dot{\bs}(i) &= \lambda\bq(i - 1)\varphi\left(\bq(i - 1)\right) - \bq(i) \\
	&= \lambda\left[\bq(i - 1)\varphi\left(\bq(i - 1)\right) - q^*(i - 1)\varphi\left(q^*(i - 1)\right)\right] + q^*(i) - \bq(i) \\
	&= \lambda\left[\bq(i - 1) - q^*(i - 1)\right]\varphi\left(\bq(i - 1)\right) + \lambda q^*(i - 1)\left[\varphi\left(\bq(i - 1)\right) - \varphi\left(q^*(i - 1)\right)\right] \\
	&+ q^*(i) - \bq(i) \\
	&\leq \lambda(1 + L)\left[\bq(i - 1) - q^*(i - 1)\right] - \left[\bq(i) - q^*(i)\right] \quad \text{for all} \quad i \geq 1.
	\end{align*}
	For the inequality observe that $\bq(t, i - 1) \geq q^*(i - 1)$ by assumption and Lemma \ref{lem: monotonicity}. In addition, note that $\varphi$ is an increasing function, thus $\varphi\left(\bq(t, i - 1)\right) \geq \varphi\left(q^*(i - 1)\right)$ for all $i \geq 1$ and $t \geq 0$. Integrating on both sides of the inequality, we conclude that
	\begin{equation*}
	\int_0^t \left[\bq(s, i) - q^*(i)\right]ds \leq \bs(0, i) - \bs(t, i) + \lambda(1 + L)\int_0^t \left[\bq(s, i - 1) - q^*(i - 1)\right]ds.
	\end{equation*}
	
	Setting $i = 1$, we obtain
	\begin{equation*}
	\int_0^t \left[\bq(s, 1) - q^*(1)\right]ds \leq \bs(0, 1) - \bs(t, 1) \leq \bs(0, 1),
	\end{equation*}
	and taking the limit as $t \to \infty$, we get
	\begin{equation*}
	\int_0^\infty \left[\bq(s, 1) - q^*(1)\right]ds \leq \bs(0, 1) < \infty.
	\end{equation*}
	
	By induction in $i$, we conclude that
	\begin{equation*}
	\int_0^\infty \left[\bq(s, i) - q^*(i)\right]ds \leq \bs(0, i) + \lambda(1 + L)\int_0^\infty \left[\bq(s, i - 1) - q^*(i - 1)\right]ds < \infty \quad \text{if} \quad i \geq 1.
	\end{equation*}
	Recalling that $\bq(t, i) \geq q^*(t, i)$ for all $t \geq 0$ and noting that $\bq(i)$ is Lipschitz by \eqref{eq: fluid differential equation}, we conclude from the above equation that $\bq(t, i) \to q^*(i)$ as $t \to \infty$ for all $i \geq 1$. 
\end{proof}

\begin{proof}[Proof of Corollary \ref{cor: total number of tasks in equilibrium}.]
	In order to compute the limit of $R_n$, note that
	\begin{equation*}
	\left|\norm{q_n}_1 - \norm{q^*}_1\right| \leq \norm{q_n - q_*}_1 \quad \text{for all} \quad n \geq 1.
	\end{equation*}
	The limit $q_n \Rightarrow q^*$ in $\ell_1$ is equivalent to $\norm{q_n - q^*}_1 \Rightarrow 0$, and hence $\norm{q_n}_1 \Rightarrow \norm{q_*}_1$. By Lemma \ref{lem: tightness and uniform integrability}, the sequence $\set{\norm{q_n}_1}{n \geq 1}$ is uniformly integrable, so the latter limit also holds in expectation. We conclude that
	\begin{equation*}
	\lim_{n \to \infty} R_n = \lim_{n \to \infty} \frac{n}{\lambda_n} E\left[\norm{q_n}_1 - 1\right] = \frac{\norm{q^*}_1 - 1}{\lambda}.
	\end{equation*}
	This completes the proof.
\end{proof}

\begin{proof}[Proof of Lemma \ref{lem: moments of number of arrivals and departures between resampling times}]
	The process that describes the times of departures from the servers can be described as follows. Every server experiences potential departures as a Poisson process of unit intensity and if a server has at least one task at the time of a potential departure, then a task departs from the server.  Given $\sigma_n^m - \sigma_n^{m - 1}$, the number of arrivals and potential departures in $\left(\sigma_n^{m - 1}, \sigma_n^m\right]$ are Poisson distributed with mean $\lambda_n \left(\sigma_n^m - \sigma_n^{m - 1}\right)$ and $n \left(\sigma_n^m - \sigma_n^{m - 1}\right)$, respectively. We get \eqref{ch5-sau: arrivals} directly from this observation, and \eqref{ch5-sau: departures} follows by further noting that $D_n^m$ is upper bounded by the number of potential departures and that this number is independent of the number of arrivals $A_n^m$. 
	
	Assume that condition (c) of Proposition \ref{prop: admissible resampling processes} holds. In order to establish \eqref{eq: bounds for condition c}, define
	\begin{equation*}
	\Gamma_n(t) \defeq \sum_{m = 1}^{\calR_n(t) + 1} \left(\sigma_n^m - \sigma_n^{m - 1}\right)^2 \quad \text{and} \quad \varphi_n(t) \defeq E\left[\Gamma_n(t)\right] \quad \text{for all} \quad t \geq 0.
	\end{equation*}
	Applying a renewal argument we conclude that
	\begin{equation*}
	\expect*{\Gamma_n(t) | \sigma_n^1 = \sigma} = \sigma^2 + \varphi_n(t - \sigma) \ind{\sigma \leq t} \quad \text{for all} \quad t, \sigma \geq 0.
	\end{equation*}
	Hence, if we let $F_n(\sigma) \defeq P\left(\sigma_n^1 \leq \sigma\right)$ denote the cumulative distribution function of the holding times, then integration with respect to $F_n$ yields
	\begin{equation*}
	\varphi_n(t) = \int_0^\infty \expect*{\Gamma_n(t) | \sigma_n^1 = \sigma}dF_n(\sigma) = E\left[\left(\sigma_n^1\right)^2\right] + \int_0^t \varphi_n(t - \sigma)dF_n(\sigma) \quad \text{for all} \quad t \geq 0.
	\end{equation*}
	
	By \cite[Theorem 12.24]{kallenberg2021foundations}, the solution of this renewal equation is
	\begin{equation*}
	\varphi_n(t) = E\left[\left(\sigma_n^1\right)^2\right] + \int_0^t E\left[\left(\sigma_n^1\right)^2\right] dR_n(t) = E\left[\left(\sigma_n^1\right)^2\right]\left[1 + R_n(t)\right] \quad \text{for all} \quad t \geq 0,
	\end{equation*}
	where $R_n(t) = E\left[\calR_n(t)\right]$. Also, condition (c) of Proposition \ref{prop: admissible resampling processes} implies that $\sigma_n^1 = \sigma_1^1 / \mu_n$, and the elementary renewal theorem yields $R_n(t) / \mu_n \to t$ as $n \to \infty$. Therefore,
	\begin{equation*}
	\lim_{n \to \infty} \mu_n \varphi_n(t) = \lim_{n \to \infty} E\left[\left(\sigma_1^1\right)^2\right] \frac{1 + R_n(t)}{\mu_n} = E\left[\left(\sigma_1^1\right)^2\right] t.
	\end{equation*}
	This completes the proof.
\end{proof}

\begin{proof}[Proof of Lemma \ref{lem: convergence in probability from components}]
	The topology of $D_{\R^\N}[0, \infty)$ is compatible with the metric
	\begin{equation*}
	\eta(\bx, \by) \defeq \sum_{T = 1}^\infty \frac{\min\left\{\sup_{t \in [0, T]} d\left(\bx(t), \by(t)\right), 1\right\}}{2^T} \quad \text{for all} \quad \bx, \by \in D_{\R^\N}[0, \infty).
	\end{equation*}
	It is straightforward to check that (a) implies (b), thus we only prove that (b) implies (a). For this purpose, fix $\varepsilon > 0$ and assume that
	\begin{equation*}
	\bx_n(i) \Rightarrow 0 \quad \text{in} \quad D_\R[0, T] \quad \text{as} \quad n \to \infty \quad \text{for all} \quad i \geq 0 \quad \text{and} \quad T \geq 0.
	\end{equation*}
	
	Choose $l$ and $m$ such that
	\begin{equation*}
	\sum_{T = l + 1}^\infty \frac{1}{2^T} \leq \frac{\varepsilon}{2} \quad \text{and} \quad \sum_{i = m}^\infty \frac{1}{2^i} \leq \frac{\varepsilon}{4l}.
	\end{equation*}
	By the choice of $l$, we have
	\begin{equation*}
	P\left(\eta(\bx_n, 0) \geq \varepsilon\right) \leq \sum_{T = 1}^l P\left(\sup_{t \in [0, T]} d\left(\bx_n(t), 0\right) \geq \frac{\varepsilon}{2l}\right),
	\end{equation*}
	and by the choice of $m$,
	\begin{equation*}
	P\left(\sup_{t \in [0, T]} d\left(\bx_n(t), 0\right) \geq \frac{\varepsilon}{2l}\right) \leq \sum_{i = 0}^{m - 1} P\left(\norm{\bx_n(i)}_T \geq \frac{\varepsilon}{4lm}\right).
	\end{equation*}
	
	Therefore, we have
	\begin{equation*}
	P\left(\eta(\bx_n, 0) \geq \varepsilon\right) \leq \sum_{T = 1}^l \sum_{i = 0}^{m - 1} P\left(\norm{\bx_n(i)}_T \geq \frac{\varepsilon}{4lm}\right),
	\end{equation*}
	and the right-hand side converges to zero as $n \to \infty$ by assumption; indeed, note that weak convergence to zero is equivalent to convergence in probability to zero.
\end{proof}

\begin{proof}[Proof of Lemma \ref{lem: joint convergence in distribution}]
	Let $\rho_j$ be the metric of $S_j$ and endow $\Pi$ with the metric defined by
	\begin{equation*}
	\varrho\left(\left(y_1, \dots, y_m\right), \left(z_1, \dots, z_m\right)\right) \defeq \max_{j = 1, \dots, m} \rho_j\left(y_j, z_j\right) \quad \text{for all} \quad \left(y_1, \dots, y_m\right), \left(z_1, \dots, z_m\right) \in \Pi,
	\end{equation*}
	which is compatible with the product topology. If $\map{f}{\Pi}{\R}$ is continuous and bounded, then $y_1 \mapsto f(y_1, x_2, \dots, x_m)$ defines a continuous and bounded function on $S_1$. Hence,
	\begin{equation*}
	\lim_{k \to \infty} E\left[f\left(X_k^1, x_2, \dots, x_m\right)\right] = E\left[f\left(X_1, x_2, \dots, x_m\right)\right],
	\end{equation*}
	and we conclude that $\left(X_k^1, x_2, \dots, x_m\right) \Rightarrow \left(X_1, x_2, \dots, x_m\right)$ in $\Pi$ as $k \to \infty$. Moreover,
	\begin{equation*}
	P\left(\varrho\left(\left(X_k^1, X_k^2, \dots, X_k^m\right), \left(X_k^1, x_2, \dots, x_m\right)\right) \geq \varepsilon\right) \leq \sum_{j = 2}^m P\left(\rho_j\left(X_k^j, x_j\right) \geq \varepsilon\right),
	\end{equation*}
	and for every $\varepsilon > 0$ the right-hand side goes to zero as $k \to \infty$ by assumption. It follows from \cite[Theorem 3.1]{billingsley1999convergence} that $\left(X_k^1, X_k^2, \dots, X_k^m\right) \Rightarrow \left(X_1, x_2, \dots, x_m\right)$ in $\Pi$.
\end{proof}

\begin{proof}[Addition to Remark \ref{rem: measurability}]
	Let $S_\R[0, \infty)$ denote the space of real c\`adl\`ag functions endowed with the Skorohod $J_1$-topology, and let $\Pi \defeq S_\R[0, \infty) \times S_\R[0, \infty) \times S_{\ell_1}[0, \infty) \times S_{\R^\N}[0, \infty)$ with the product topology and the Borel $\sigma$-algebra. The right-hand side of \eqref{eq: functional equation} defines a measurable function from $\Pi$ into $\R$, thus the probability that \eqref{eq: functional equation} holds is equal to
	\begin{equation*}
	P\left(\left(\calN_k^a, \calR_k, \bq_k, \bv_k\right) \in A\right)
	\end{equation*}
	for some set $A$ in the Borel $\sigma$-algebra of $\Pi$. Skorohod's representation theorem implies that the law of $\left(\calN_k^a, \calR_k, \bq_k, \bv_k\right)$ is as in Section \ref{sub: decomposition of the equations}, so the latter probability equals one.
	
	As an example, let us establish that $\bu_k(t, i)$ is a measurable function of $\left(\calN_k^a, \calR_k, \bq_k, \bv_k\right)$ when \eqref{eq: general decomposition} holds. For this purpose, consider partitions $0 = t_0^l < \dots < t_{J_l}^l = t$ such that
	\begin{equation*}
	\lim_{l \to \infty} \max_{0 \leq j \leq J_l - 1} \left(t_{j + 1}^l - t_j^l\right) = 0.
	\end{equation*}
	In addition, define $s_m^l \defeq t_{j_m^l}^l$ for each $l, m \geq 1$, where
	\begin{equation*}
	j_m^l \defeq \max \set{0 \leq j \leq J_l}{\calN_k^a\left(t_j^l\right) < m\ \text{and}\ j = 0\ \text{or}\ \calR_k\left(t_j^l\right) > \calR_k\left(t_{j - 1}^l\right)}.
	\end{equation*}
	
	Because the finite-dimensional projections $\map{\pi_{t_1, \dots, t_l}}{\Pi}{\R^l}$ are measurable for all $t_1, \dots, t_l \geq 0$, we conclude that $j_m^l$ is measurable as a function from $\Pi$ into $\R$. Also,
	\begin{equation*}
	\lim_{l \to \infty} s_m^l = \max \set{\sigma_k^j}{\sigma_k^j < \tau_k^m},
	\end{equation*}
	and $s_m^l \geq \max \set{\sigma_k^j}{\sigma_k^j < \tau_k^m}$ for all large enough $l$. Since $\bq_k$ is right-continuous,
	\begin{equation*}
	\lim_{l \to \infty} \bq_k\left(s_m^l, i\right) = \bar{q}_k^m(i).
	\end{equation*}
	
	It follows that
	\begin{equation*}
	\bu_k(t, i) = \lim_{l \to \infty} \frac{1}{k} \sum_{m = 1}^l \sum_{j = 1}^{J_l} \beta_k \left(\bq_k\left(t_j^l, i\right)\right) \ind{j = j_m^l} \ind{m \leq \calN_k^a(t)}.
	\end{equation*}
	Each of the functions inside of the limit sign is measurable from $\Pi$ into $\R$ and the limit of measurable functions is a measurable function as well. Hence, we conclude that $\bu_k(t, i)$ is a measurable function of $\left(\calN_k^a, \calR_k, \bq_k, \bv_k\right)$.
\end{proof}

\begin{proof}[Proof of Lemma \ref{lem: uniform convergence of sampling functions}]
	In order to prove \eqref{eq: limit of alpha n}, fix an arbitrary $\varepsilon > 0$ and note that
	\begin{equation*}
	\sup_{x \in [0, d/ n)} \left|\alpha_n(d + 1, x) - x^{d + 1}\right| =  \sup_{x \in [0, d/ n)} x^{d + 1} \leq \varepsilon
	\end{equation*}
	for all sufficiently large $n$. Now consider the function $\map{f}{[0, 1]^{d + 1}}{\R}$ that assigns to each vector the product of its entries. The bound
	\begin{equation*}
	\max_{0 \leq k \leq d}\left|\frac{nx - k}{n - k} - x\right| = \max_{0 \leq k \leq d} \left|\frac{k(x - 1)}{n - k}\right| \leq \frac{d}{n - d} \quad \text{for all} \quad x \in \left[0, 1\right],
	\end{equation*}
	and the uniform continuity of $f$ imply that
	\begin{equation*}
	\sup_{x \in [d/ n, 1]} \left|\alpha_n(d + 1, x) - x^{d + 1}\right| \leq \sup_{x \in [0, 1]} \left|f\left(x, \frac{nx - 1}{n - 1}, \dots, \frac{nx - d}{n - d}\right) - f(x, x, \dots, x)\right| \leq \varepsilon
	\end{equation*}
	for all large enough $n$. Because $\varepsilon$ is arbitrary, this proves \eqref{eq: limit of alpha n}.
	
	For \eqref{eq: limit of beta n}, observe that
	\begin{equation*}
	\beta_n(x) - x\varphi(x) = \sum_{d = 0}^\infty \left[\alpha_n(d + 1, x)p_n(d) - x^{d + 1}p(d)\right] \quad \text{for all} \quad x \in [0, 1] \quad \text{and} \quad n \geq 1.
	\end{equation*}
	Fix $\theta \in [0, 1)$ and some $k \geq 0$. If $x \in [0, \theta]$, then
	\begin{align*}
	\left|\beta_n(x) - x\varphi(x)\right| &\leq \sum_{d = 0}^k \left|\alpha_n(d + 1, x)p_n(d) - x^{d + 1}p(d)\right| + \sum_{d = k + 1}^\infty \alpha_n(d + 1, x) + \sum_{d = k + 1}^\infty x^{d + 1} \\
	&\leq \sum_{d = 0}^k \left|\alpha_n(d + 1, x) - x^{d + 1}\right|p_n(d) + \sum_{d = 0}^k x^{d + 1}\left|p_n(d) - p(d)\right| + 2\sum_{d = k + 1}^\infty x^{d + 1} \\
	&\leq \sum_{d = 0}^k \left|\alpha_n(d + 1, x) - x^{d + 1}\right| + \sum_{d = 0}^k \left|p_n(d) - p(d)\right| + \frac{2\theta^{k + 2}}{1 - \theta}.
	\end{align*}
	For the second inequality, note that $\alpha_n(d + 1, x) \leq x^{d + 1}$ for all $d \geq 0$ and $n \geq 1$. Given an arbitrary $\varepsilon > 0$, we may choose $k$ such that $2\theta^{k + 2} \leq (1 - \theta) \varepsilon$. Then
	\begin{equation*}
	\lim_{n \to \infty} \sup_{x \in [0, \theta]}\left|\beta_n(x) - x\varphi(x)\right| \leq \lim_{n \to \infty} \left[\sum_{d = 0}^k \sup_{x \in [0, 1]} \left|\alpha_n(d + 1, x) - x^{d + 1}\right| + \sum_{d = 0}^k \left|p_n(d) - p(d)\right|\right] + \varepsilon.
	\end{equation*}
	Since $\varepsilon$ is arbitrary, we conclude from \eqref{eq: limiting degree distribution condition} and \eqref{eq: limit of alpha n} that \eqref{eq: limit of beta n} holds.
	
	Suppose now that $p(\infty) = 0$ and thus $\varphi(1) = 1$. It follows from Abel's theorem that $\varphi$ is continuous on $[0, 1]$. Hence, if $\varepsilon > 0$, then there exists $\delta \in (0, 1)$ such that
	\begin{equation*}
	|x\varphi(x) - y\varphi(y)| \leq \frac{\varepsilon}{3} \quad \text{for all} \quad x, y \in [0, 1] \quad \text{such that} \quad |x - y| \leq \delta.
	\end{equation*}
	Choose $\theta \in (1 - \delta, 1)$ and note that \eqref{eq: limit of beta n} implies that there exists $m$ such that
	\begin{equation*}
	|\beta_n(x) - x\varphi(x)| \leq \frac{\varepsilon}{6} \quad \text{for all} \quad x \in [0, \theta] \quad \text{and} \quad n \geq m.
	\end{equation*}
	If $x \in (\theta, 1]$, then we have
	\begin{align*}
	|\beta_n(x) - x\varphi(x)| &\leq |\beta_n(x) - \beta_n(\theta)| + |\beta_n(\theta) - \theta\varphi(\theta)| + |\theta\varphi(\theta) - x\varphi(x)| \\
	&\leq 1 - \beta_n(\theta) + |\beta_n(\theta) - \theta\varphi(\theta)| + |\theta\varphi(\theta) - x\varphi(x)| \\
	&\leq 1 - \theta\varphi(\theta) + 2|\beta_n(\theta) - \theta\varphi(\theta)| + |\theta\varphi(\theta) - x\varphi(x)| \leq \varepsilon \quad \text{for all} \quad n \geq m.
	\end{align*}
	For the second inequality, note that $\beta_n$ is nondecreasing and $\beta_n(1) = 1$ because $\alpha_n(d + 1)$ has these properties for each $d \leq n - 1$. For the last inequality, recall that $\varphi(1) = 1$ by assumption, and note that $|1 - x| < |1 - \theta| < \delta$. Since $\varepsilon$ is arbitrary, \eqref{eq: limit of beta n non-degenarate case} holds.
\end{proof}

\section{Construction of sample paths}
\label{app: construction of sample paths}

The processes $\bq_n$ and $\bX_n$ are constructed inductively. At time zero,
\begin{equation*}
	\bX_n(0) = X_n \quad \text{and} \quad \bq_n(0, i) = q_n(i) = \frac{1}{n}\sum_{u = 1}^n \ind{X_n(u) \geq i} \quad \text{for all} \quad i \geq 0.
\end{equation*}
We refer to time zero and the times of arrivals and departures of tasks as event times. If both processes have already been defined up to event time $\tau$, then they remain constant until the next event occurs.

Let $\bq_n^\tau$ denote the stopped process defined as
\begin{equation*}
	\bq_n^\tau(t) \defeq \bq_n(t) \quad \text{if} \quad 0 \leq t \leq \tau \quad \text{and} \quad \bq_n^\tau(t) \defeq \bq_n(\tau) \quad \text{if} \quad t > \tau.
\end{equation*}
The next event after $\tau$ corresponds to the first jump after $\tau$ of one the processes
\begin{equation}
	\label{eq: arrival and departure processes}
	t \mapsto \calN_n^a(t) \quad \text{and} \quad t \mapsto \calN_i^d\left(n\int_0^t\left[\bq_n^\tau(s, i) - \bq_n^\tau(s, i + 1)\right]ds\right).
\end{equation}
Only finitely many of these processes have a positive intensity since the initial number of tasks in the system is finite by assumption, hence the time of the next event is strictly larger than $\tau$. The intensity of process $i$ on the right corresponds to the departure rate from servers with exactly $i$ tasks, a jump of this process indicates such a departure.

Once the time of the next event is determined, the processes $\bq_n$ and $\bX_n$ are updated in different ways depending on the type of the new event. If the first event after $\tau$ is an arrival that occurs at time $\tau_n^m$, then we set
\begin{equation*}
	\bq_n\left(\tau_n^m, i\right) = \bq_n\left(\tau_n^{m-}, i\right) + \frac{1}{n}\left[I_n^m\left(\bX_n\left(\tau_n^{m-}\right), i - 1\right) - I_n^m\left(\bX_n\left(\tau_n^{m-}\right), i\right)\right] \quad \text{for all} \quad i \geq 1.
\end{equation*}
The difference between the last two terms equals one if the task is placed in the queue of a server with exactly $i - 1$ tasks and equals zero otherwise. In addition, we let
\begin{equation*}
	v_n^m = \min \argmin{v} \set{\bX_n\left(\tau_n^{m-}, v\right)}{v = u_n^m\ \text{or}\ \left(u_n^m, v\right) \in \bE_n\left(\tau_n^{m-}\right)}
\end{equation*}
be the server with the smallest index among the servers with the least number of tasks in the neighborhood of $u_n^m$ and we set
\begin{equation*}
	\bX_n\left(\tau_n^m, v\right) = \bX_n\left(\tau_n^{m-}, v\right) + \ind{v = v_n^m} \quad \text{for all} \quad v \in V_n.
\end{equation*}
In order to choose the server $v_n^m$ in the neighborhood of $u_n^m$ that will receive the new task, we break ties between servers with the least number of tasks by selecting the server with the smallest index. But any other criterion could be used instead.

Suppose instead that the first event after $\tau$ is a departure from a server with exactly $i$ tasks, triggered by a jump of process $i$ on the right of \eqref{eq: arrival and departure processes}. We denote the time of this departure by $\tau_{i, n}^m$ and we set
\begin{equation*}
	\bq_n\left(\tau_{i, n}^m, j\right) = \bq_n\left(\tau_{i, n}^{m-}, j\right) - \frac{1}{n}\ind{j = i} \quad \text{for all} \quad j \geq 1.
\end{equation*}
In addition, we use the random variable $U_{i, n}^m$ to select a server $v_{i, n}^m$ uniformly at random among all the servers $v$ with exactly $i$ tasks: $\bX_n\left(\tau_{i, n}^{m-}, v\right) = i$. Then we set
\begin{equation*}
	\bX_n\left(\tau_{i, n}^m, v\right) = \bX_n\left(\tau_{i, n}^{m-}, v\right) - \ind{v_{i, n}^m = v} \quad \text{for all} \quad v \in V_n.
\end{equation*}

The above construction determines $\bX_n$ and $\bq_n$ on a set of probability one that excludes certain events of probability zero, such as tasks arriving and departing simultaneously. Both $\bq_n$ and $\bX_n$ are piecewise constant c\`adl\`ag  processes defined on $[0, \infty)$. In addition, the jumps of $\bq_n$ are of size $1 / n$ and $\bX_n$ has jumps of unit size.

\section{Relative compactness}
\label{app: tightness of occupancy processes}

The uniform topology of $D_{\ell_1}[0, \infty)$ is compatible with the metric defined by
\begin{equation*}
\rho(\bx, \by) \defeq \sum_{T = 1}^\infty \frac{\min\left\{\sup_{t \in [0, T]} \norm{\bx(t) - \by(t)}_1, 1\right\}}{2^T} \quad \text{for all} \quad \bx, \by \in D_{\ell_1}[0, \infty).
\end{equation*}
In addition, given a function $\bx \in D_{\ell_1}[0, \infty)$, its local moduli of continuity is defined as
\begin{equation*}
w_T(\bx, h) \defeq \sup \set{\norm{\bx(s) - \bx(t)}_1}{s, t \in [0, T]\ \text{and}\ |s - t| \leq h}
\end{equation*}
for all $h > 0$ and $T \geq 0$. The modified local moduli of continuity is defined as
\begin{equation*}
\tilde{w}_T(\bx, h) \defeq \inf_\calI \max_{I \in \calI} \sup_{s, t \in I} \norm{\bx(s) - \bx(t)}_1.
\end{equation*}
Here the infimum extends over all partitions $\calI$ of $[0, T)$ into subintervals $I = [u, v)$ such that $v - u \geq h$ if $v < T$. By \cite[Theorems 23.8 and 23.9]{kallenberg2021foundations}, and the fact that
\begin{equation*}
\tilde{w}_T(\bx, h) \leq w_T(\bx, h) \quad \text{for all} \quad \bx \in D_{\ell_1}[0, \infty), \quad h > 0 \quad \text{and} \quad T \geq 0,
\end{equation*}
the relative compactness of $\set{\bq_n}{n \geq 1}$ can be obtained by proving the next properties.
\begin{enumerate}
	\item[(a)] $\set{\bq_n(t)}{n \geq 1}$ is tight in $\ell_1$ for all $t$ in some dense subset of $[0, \infty)$.
	
	\item[(b)] If $T > 0$, then
	\begin{equation*}
	\lim_{h \to 0} \limsup_{n \to \infty} E\left[\min\left\{w_T(\bq_n, h), 1\right\}\right] = 0.
	\end{equation*}
\end{enumerate}
The above properties correspond to (i) and (ii) of \cite[Theorem 23.8]{kallenberg2021foundations}, where the supremum in (ii) can be replaced by a limit superior using similar arguments as in \cite[Theorem 13.2]{billingsley1999convergence}. Properties (a) and (b) imply that the sequence $\set{\bq_n}{n \geq 1}$ is tight with respect to the Skorohod-$J_1$ topology by \cite[Theorem 23.8]{kallenberg2021foundations}. Hence, every convergent subsequence has a further subsequence that converges weakly in the Skorohod-$J_1$ topology. It follows from (b) and \cite[Theorem 23.9]{kallenberg2021foundations} that any such subsequence also converges in the uniform topology to a process that is continuous, and this implies that Proposition \ref{prop: tightness} holds.

The following two lemmas are used to establish property (a).

\begin{lemma}
	\label{lem: tightness in l1}
	Let $\set{q_n}{n \geq 1}$ be a sequence of random variables with values in
	\begin{equation*}
	Q \defeq \set{q \in \ell_1}{0 \leq q(i + 1) \leq q(i) \leq q(0) = 1\ \text{for all}\ i \geq 1}.
	\end{equation*}
	The sequence $\set{q_n}{n \geq 1}$ is tight in $\ell_1$ if and only if
	\begin{equation*}
	\lim_{m \to \infty} \limsup_{n \to \infty} P\left(\sum_{i > m} q_n(i) > \varepsilon\right) = 0 \quad \text{for all} \quad \varepsilon > 0.
	\end{equation*}
\end{lemma}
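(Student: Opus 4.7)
The plan is to combine Prohorov's theorem with the classical Kolmogorov--Riesz characterization of relatively compact sets in $\ell_1$: a set $A \subset \ell_1$ is relatively compact if and only if it is norm-bounded and has uniformly vanishing tails, $\lim_{m \to \infty} \sup_{q \in A} \sum_{i > m} |q(i)| = 0$. Since elements of $Q$ satisfy $0 \leq q(i) \leq 1$, norm-boundedness will be automatic from any uniform tail bound, so the entire lemma reduces to translating tail bounds between the deterministic (compactness) and probabilistic (tightness) settings.

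For the necessary direction, given tightness and $\delta > 0$, Prohorov's theorem produces a compact $K \subset \ell_1$ with $P(q_n \in K) \geq 1 - \delta$ for every $n$. The $\ell_1$-compactness criterion then gives $\sup_{q \in K} \sum_{i > m} q(i) \to 0$ as $m \to \infty$, so for any $\varepsilon > 0$ one can pick $m$ large enough that $\{\sum_{i > m} q_n(i) > \varepsilon\} \subset \{q_n \notin K\}$; taking $\limsup_n$, then $m \to \infty$, and finally letting $\delta \downarrow 0$ yields the claimed limit.

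The sufficient direction requires more care. Fix $\delta > 0$ and, for each $k \geq 1$, use the hypothesis to choose $m_k$ with $\limsup_n P(\sum_{i > m_k} q_n(i) > 1/k) < \delta/2^{k+1}$; this bound then holds for all $n \geq N_k$, and for the finitely many smaller $n$ I will invoke Ulam's theorem (each law on the Polish space $\ell_1$ is individually tight) to enlarge $m_k$ so that the bound holds uniformly in $n$. Define
$$K \defeq \bigcap_{k \geq 1} \Bigl\{q \in Q : \sum_{i > m_k} q(i) \leq 1/k\Bigr\}.$$
This $K$ is closed in $\ell_1$ (each constraint is the preimage of a closed half-line under a continuous linear functional, and $Q$ itself is $\ell_1$-closed), norm-bounded (for $q \in K$ one has $\|q\|_1 \leq m_1 + 2$, using $q(i) \leq 1$ for $i \leq m_1$ and $\sum_{i > m_1} q(i) \leq 1$), and has uniformly vanishing tails by construction, hence compact; a union bound then gives $P(q_n \notin K) \leq \sum_k \delta/2^{k+1} < \delta$, proving tightness.

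The only genuinely nontrivial ingredient is the $\ell_1$-specific compactness criterion, since the uniform tail condition alone would not force compactness in arbitrary function spaces; in $\ell_1$ it does, because coordinatewise boundedness together with uniformly small tails suffices. The mild nuisance of the finitely many exceptional $n$ in the sufficient direction is dispatched by Ulam's theorem but should be mentioned for completeness.
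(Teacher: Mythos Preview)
Your argument is correct. The paper does not actually prove this lemma; it simply cites \cite[Lemma 2]{mukherjee2018universality} and refers the reader there. Your proof via Prohorov's characterization of tightness combined with the classical $\ell_1$-compactness criterion (bounded plus uniformly small tails) is the standard route and is carried out cleanly.

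Two minor remarks. In the necessity direction you write ``Prohorov's theorem produces a compact $K$,'' but what you are really invoking is just the \emph{definition} of tightness in a metric space; Prohorov's theorem proper is the equivalence between tightness and relative compactness of the family of laws, which you do not need here. In the sufficiency direction, the appeal to Ulam's theorem for the finitely many exceptional $n$ is fine but heavier than necessary: since each $q_n$ lies in $\ell_1$ almost surely, $\sum_{i>m} q_n(i) \to 0$ almost surely as $m \to \infty$, and monotone convergence gives $P\bigl(\sum_{i>m} q_n(i) > 1/k\bigr) \to 0$ directly. Neither point affects the validity of the proof.
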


The previous lemma is taken from \cite[Lemma 2]{mukherjee2018universality}, where the proof can be found. We use it in the following lemma in order to establish property (a).

\begin{lemma}
	\label{lem: tightness of qn(t)}
	If $\set{\bq_n(0)}{n \geq 1}$ is tight in $\ell_1$, then so is $\set{\bq_n(t)}{n \geq 1}$ for all $t \geq 0$.
\end{lemma}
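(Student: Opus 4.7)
The plan is to apply the tightness criterion in Lemma~\ref{lem: tightness in l1}, so it suffices to fix $t\geq 0$ and show that
\begin{equation*}
\lim_{m\to\infty}\limsup_{n\to\infty} P\left(\sum_{i>m}\bq_n(t,i)>\varepsilon\right)=0 \quad \text{for all}\quad \varepsilon>0.
\end{equation*}
The natural tool is the monotone coupling already used in the proof of Proposition~\ref{prop: ergodicity}, which dominates the load balancing dynamics by a collection of $n$ independent M/M/1 queues $\bY_n$ at load $\rho_n=\lambda_n/n$. Initializing the coupled system with $\bY_n(0)=\bX_n(0)$ yields $\bq_n(0)=\br_n(0)$ and the pathwise bound
\begin{equation*}
\sum_{i>m}\bq_n(t,i)\leq\sum_{i>m}\br_n(t,i)=\frac{1}{n}\sum_{u=1}^n\left[\bY_n(t,u)-m\right]^+.
\end{equation*}

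The heart of the argument is then the standard stochastic domination for M/M/1: each coordinate satisfies $\bY_n(t,u)\leq \bY_n(0,u)+N_u^a(t)$, where the $N_u^a$ are independent Poisson processes of rate $\rho_n$. Plugging this in and using the elementary inequality $(a+b-m)^+\leq(a-m/2)^+ + (b-m/2)^+$, I get
\begin{equation*}
\sum_{i>m}\bq_n(t,i)\leq \sum_{i>m/2}\bq_n(0,i)+\frac{1}{n}\sum_{u=1}^n\left[N_u^a(t)-m/2\right]^+.
\end{equation*}
A union bound and Markov's inequality on the second term reduce the claim to:
\begin{equation*}
\lim_{m\to\infty}\limsup_{n\to\infty} P\left(\sum_{i>m/2}\bq_n(0,i)>\varepsilon/2\right)=0, \qquad \lim_{m\to\infty}\limsup_{n\to\infty} E\left[N_u^a(t)-m/2\right]^+=0.
\end{equation*}
The first limit is just the tightness hypothesis on $\bq_n(0)$ via Lemma~\ref{lem: tightness in l1}. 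The second follows from dominated convergence after observing that $\rho_n\to\lambda$ implies $\rho_n\leq 2\lambda$ for all large $n$, so $N_u^a(t)$ is stochastically dominated by a Poisson variable with mean $2\lambda t$ independent of $n$.

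There is no serious obstacle here: the coupling and the M/M/1 tail estimates are already established in the paper in the course of Lemma~\ref{lem: tightness and uniform integrability}, and the only slightly non-routine step is the splitting inequality $(a+b-m)^+\leq(a-m/2)^+ + (b-m/2)^+$, which lets the tightness at time $0$ be used directly to control the tail at time $t$ without requiring any moment bound on $\|\bq_n(0)\|_1$.
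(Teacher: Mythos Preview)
Your argument is correct and takes a genuinely different route from the paper's own proof. The paper does \emph{not} invoke the M/M/1 coupling here; instead it works directly with $\bq_n$, decomposing the event $\{\sum_{i>m}\bq_n(t,i)>\varepsilon\}$ according to whether the initial tail $\sum_{i>m-k}\bq_n(0,i)$ was already large, whether the total number of arrivals in $[0,t]$ exceeds a threshold $\lambda_n\theta$, or whether too many tasks arrived at servers with at least $m$ tasks. The first two alternatives are handled by the tightness hypothesis and a Chernoff bound, and the third is controlled by a second Chernoff bound after arguing that on the complementary event $\bq_n(s,m)$ stays below some $\delta_1$ for all $s\in[0,t]$. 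This requires juggling several constants $k,\delta_0,\delta_1,\theta$ with interlocking constraints.

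Your approach is cleaner: by passing immediately to the dominating M/M/1 occupancy $\br_n$ and using the crude bound $\bY_n(t,u)\le\bY_n(0,u)+N_u^a(t)$, the splitting inequality $(a+b-m)^+\le(a-\lfloor m/2\rfloor)^+ +(b-\lfloor m/2\rfloor)^+$ decouples the problem into the initial tail (handled by the hypothesis) and a Poisson tail (handled by dominated convergence once $\rho_n$ is bounded). The cost is that you rely on the coupling from Proposition~\ref{prop: ergodicity} and the fact that the marginal law of $\bY_n$ in that coupling is exactly that of $n$ independent M/M/1 queues; this is true but worth stating explicitly, since the per-server arrival streams $N_u^a$ are defined through the label mechanism rather than directly. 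The paper's argument, by contrast, is self-contained and never leaves the $\bq_n$ process, at the price of a more delicate event decomposition. Your proof also exhibits nicely why no moment assumption on $\norm{\bq_n(0)}_1$ is needed: the splitting trick sidesteps any need to control the full $\ell_1$ norm.
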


\begin{proof}
	By Lemma \ref{lem: tightness in l1}, it suffices to prove that
	\begin{equation}
	\label{aux: tightness condition}
	\lim_{m \to \infty} \limsup_{n \to \infty} P\left(\sum_{i > m} \bq_n(t, i) > \varepsilon\right) = 0 \quad \text{for all} \quad \varepsilon, t > 0.
	\end{equation}
	Fix any $\varepsilon, t > 0$, let $\theta \defeq t(\e - 1) + 1$ and choose constants $k, \delta_0, \delta_1 > 0$ such that
	\begin{equation*}
	\delta_0 + \delta_1 \lambda \theta < \varepsilon \quad \text{and} \quad \delta_1 > \delta_0 + \frac{\lambda\theta}{k}.
	\end{equation*}
	In addition, fix $n_0 \geq 0$ such that all $n \geq n_0$ satisfy:
	\begin{subequations}
		\begin{align}
		&\delta_0 + \frac{\delta_1\lambda_n\theta}{n} < \varepsilon, \label{saux1: conditions on deltas and k}\\
		&\delta_1 > \delta_0 + \frac{\lambda_n\theta}{kn}. \label{saux2: conditions on deltas and k}
		\end{align}
	\end{subequations}
	
	For each $m > k$ and $n \geq n_0$, consider the following events:
	\begin{equation*}
	A_{m, n} \defeq \left\{\sum_{i > m} \bq_n(t, i) > \varepsilon\right\} \quad \text{and} \quad B_{m, n} \defeq \left\{\sum_{i > m - k} \bq_n(0, i) > \delta_0\right\}.
	\end{equation*}
	Also, define $C_n$ as the event that the total number of arrivals in the interval $[0, t]$ is strictly larger than $\lambda_n\theta$, and let $D_{m, n}$ be the event that the number of arrivals in the interval $[0, t]$ to servers with at least $m$ tasks is strictly larger than $\delta_1\lambda_n\theta$. In the definition of $D_{m, n}$ we refer to all the tasks that appear at servers with at least $m$ tasks, even if these tasks are then dispatched to a server with fewer than $m$ tasks.
	
	It is clear that
	\begin{equation*}
	P\left(A_{m, n}\right) \leq P\left(A_{m, n} \cap B_{m, n}^c \cap C_n^c\right) + P\left(B_{m, n}\right) + P(C_n).
	\end{equation*}
	Note that \eqref{aux: tightness condition} holds for $\varepsilon = \delta_0$ and $t = 0$ since $\set{\bq_n(0)}{n \geq 1}$ is tight, so $P\left(B_{m, n}\right) \to 0$ as $n \to \infty$ and then $m \to \infty$. Moreover, a Chernoff bound yields
	\begin{equation*}
	P\left(C_n\right) \leq \frac{\e^{\lambda_nt(\e - 1)}}{\e^{\lambda_n \theta}} = \e^{-\lambda_n},
	\end{equation*}
	hence $P\left(C_n\right) \to 0$ as $n \to \infty$. This takes care of the last two term on the right-hand side.
	
	If $n \geq n_0$, then \eqref{saux1: conditions on deltas and k} implies that $A_{m, n} \cap B_{m, n}^c \cap C_n^c \subset B_{m, n}^c \cap C_n^c \cap D_{m, n}$, thus
	\begin{equation*}
	P\left(A_{m, n}\right) \leq P\left(B_{m, n}^c \cap C_n^c \cap D_{m, n}\right) + P\left(B_{m, n}\right) + P(C_n)
	\end{equation*}
	and it only remains to deal with the first term on the right-hand side.
	
	In $B_{m, n}^c \cap C_n^c$ there are at most $\lambda_n\theta$ arrivals in $[0, t]$ and
	\begin{equation*}
	\bq_n(0, i) \leq \delta_0 \quad \text{for all} \quad m - k + 1 \leq i \leq m.
	\end{equation*}
	It follows from \eqref{saux2: conditions on deltas and k} that $\bq_n(s, m) < \delta_1$ for all $s \in [0, t]$ and $n \geq n_0$. Otherwise $\bq_n(s, i) \geq \delta_1$ for all $m - k + 1 \leq i \leq m$, which requires at least $kn(\delta_1 - \delta_0) > \lambda_n \theta$ arrivals.
	
	The process $\bq_n$ can be constructed using Poisson processes
	\begin{equation*}
	\calN_1^n(s) \defeq \calN_1\left(\lambda_n\int_0^s \left[1 - \bq_n(\tau, m)\right]d\tau\right) \quad \text{and} \quad \calN_2^n(s) \defeq \calN_2\left(\lambda_n\int_0^s \bq_n(\tau, m)d\tau\right)
	\end{equation*}
	for counting tasks that appear in servers with less than $m$ tasks and at least $m$ tasks, respectively, where $\calN_1$ and $\calN_2$ are independent Poisson processes of intensity one. Hence,
	\begin{align*}
	P\left(B_{m, n}^c \cap C_n^c \cap D_{m, n}\right) &= P\left(B_{m, n}^c \cap C_n^c \cap \left\{\calN_2^n(t) > \delta_1\lambda_n\theta\right\}\right) \\
	&\leq P\left(B_{m, n}^c \cap C_n^c \cap \left\{\calN_2\left(\delta_1\lambda_nt\right) > \delta_1\lambda_n\theta\right\}\right) \\
	&\leq P\left(\calN_2\left(\delta_1\lambda_nt\right) > \delta_1\lambda_n\theta\right) \leq \frac{\e^{\delta_1\lambda_nt(\e - 1)}}{\e^{\delta_1\lambda_n \theta}} = \e^{-\delta_1\lambda_n},
	\end{align*}
	where the last step uses a Chernoff bound. Since the right-hand side goes to zero as $n \to \infty$, we conclude that \eqref{aux: tightness condition} holds.
\end{proof}

Next we establish property (b) and we complete the proof of Proposition \ref{prop: tightness}.

\begin{proof}[Proof of Proposition \ref{prop: tightness}.]
	An alternative construction of the process $\bq_n$ can be carried out using a single Poisson process for counting both arrivals and potential departures. Let $\calN_n$ be this process, which has intensity $\nu_n \defeq \lambda_n + n$. Then
	\begin{equation}
	\label{aux: bound for modulus of continuity}
	E\left[w_T(\bq_n, h)\right] \leq E\left[\sup_{t \in [0, T]} \frac{\calN_n(t + h) - \calN_n(t)}{n}\right].
	\end{equation}
	Indeed, note that $\bq_n$ has jumps of size $1 / n$ and remains constant between two consecutive jumps of $\calN_n$. It follows that if $s < \tau_1 < \tau_2 < \dots < \tau_k \leq t$ are the arrival and potential departure times in $(s, t]$, then we must have
	\begin{align*}
		\norm{\bq_n(s) - \bq_n(t)}_1 &\leq \norm{\bq_n(s) - \bq_n(\tau_1)}_1 + \sum_{j = 1}^{k - 1} \norm{\bq_n(\tau_j) - \bq_n(\tau_{j + 1})}_1 + \norm{\bq_n(\tau_k) - \bq_n(t)}_1 \\
		&= \norm{\bq_n(s) - \bq_n(\tau_1)}_1 + \sum_{j = 1}^{k - 1} \norm{\bq_n(\tau_j) - \bq_n(\tau_{j + 1})}_1 \leq \frac{k}{n} = \frac{\calN_n(t) - \calN_n(s)}{n}.
	\end{align*}
	This remark leads to \eqref{aux: bound for modulus of continuity}. We now observe that
	\begin{align*}
	E\left[w_T(\bq_n, h)\right] &\leq E\left[\sup_{t \in [0, T]} \left|\frac{\calN_n(t + h) - \nu_n(t + h)}{n}\right|\right] + E\left[\sup_{t \in [0, T]} \left|\frac{\calN_n(t) - \nu_nt}{n}\right|\right] + \frac{\nu_n h}{n}.
	\end{align*}
	It follows from Jensen's inequality and Doob's maximal inequality that
	\begin{align*}
	E\left[w_T(\bq_n, h)\right] &\leq 2\left(\sqrt{E\left[\left|\frac{\calN_n(T + h) - \nu_n(T + h)}{n}\right|^2\right]} + \sqrt{E\left[\left|\frac{\calN_n(T) - \nu_nT}{n}\right|^2\right]}\right) + \frac{\nu_nh}{n} \\
	&= 2\left(\frac{\sqrt{\nu_n(T + h)}}{n} + \frac{\sqrt{\nu_nT}}{n}\right) + \frac{\nu_nh}{n}.
	\end{align*}
	We conclude that (b) holds since
	\begin{equation*}
	\lim_{h \to 0} \limsup_{n \to \infty} E\left[\min\left\{w_T(\bq_n, h), 1\right\}\right] \leq \lim_{h \to 0} \limsup_{n \to \infty} E\left[w_T(\bq_n, h)\right] \leq \lim_{h \to 0} (\lambda + 1)h = 0.
	\end{equation*}
	
	The last identity and \cite[Theorem 23.9]{kallenberg2021foundations} also imply that the limit in distribution of any convergent subsequence of $\set{\bq_n}{n \geq 1}$ is an almost surely continuous process.
\end{proof}

\end{appendices}
	
%% References
\newcommand{\noop}[1]{}
\bibliographystyle{plain}
\bibliography{bibliography}
	
\end{document}